\newcommand{\Hess}{\mathrm{Hess}\, }
\renewcommand{\d}{\mathrm{d}}
\newcommand{\p}{\partial }
\newcommand{\D}{\mathrm{D}}
\newcommand{\Id}{\mathrm{Id}}
\newcommand{\e}{\epsilon}
\newcommand{\Z}{\mathbb{Z}}
\newcommand{\R}{\mathbb{R}}
 \journalname{Foundations of Computational Mathematics}
\begin{document}

\title{Preservation of bifurcations of Hamiltonian boundary value problems under discretisation
}


\titlerunning{Preservation of bifurcations of Hamiltonian boundary value problems}        

\author{Robert I McLachlan         \and
        Christian Offen%
\protect\footnote{corresponding author\\  Communicated by Hans Munthe-Kaas} 
}

\authorrunning{Robert I McLachlan \and Christian Offen} 

\institute{R. McLachlan \at
              School of Fundamental Sciences\\Massey University\\Palmerston North, New Zealand\\
             ORCiD: 0000-0003-0392-4957\\
              \email{r.mclachlan@massey.ac.nz}           
           \and
           C. Offen \at
              School of Fundamental Sciences\\Massey University\\Palmerston North, New Zealand\\
             	ORCiD: 0000-0002-5940-8057\\
			\email{c.offen@massey.ac.nz}
}


\maketitle

\begin{abstract}
We show that symplectic integrators preserve bifurcations of Hamiltonian boundary value problems and that nonsymplectic integrators do not. We provide a universal description of the breaking of umbilic bifurcations by nonysmplectic integrators. We discover extra structure induced from certain types of boundary value problems, including classical Dirichlet problems, that is useful to locate bifurcations. Geodesics connecting two points are an example of a Hamiltonian boundary value problem, and we introduce the jet-RATTLE method, a symplectic integrator that easily computes geodesics and their bifurcations. Finally, we study the periodic pitchfork bifurcation, a codimension-1 bifurcation arising in integrable Hamiltonian systems. It is not preserved by either symplectic on nonsymplectic integrators, but in some circumstances symplecticity greatly reduces the error.
\keywords{Hamiltonian boundary value problems \and
bifurcations \and
periodic pitchfork \and
geodesic bifurcations \and
geometric integration \and
singularity theory \and
catastrophe theory}
 \subclass{65L10  \and 65P10 \and 65P30 \and 37M15}	
\end{abstract}

\section{Motivation and introduction}

Symplectic integrators can be excellent for Hamiltonian initial value problems. Reasons for this include
their preservation of invariant sets like tori, good energy behaviour, nonexistence of attractors,
and good behaviour of statistical properties. These all refer to {\em long-time} behaviour. They are
directly connected to the dynamical behaviour of symplectic maps $\varphi\colon M\to M$
on the phase space under iteration. Boundary value problems, in contrast, are posed for fixed (and often quite short)
times. Symplecticity manifests as a symplectic map $\varphi\colon M\to M’$ which is not iterated.
Is there any point, therefore, for a symplectic integrator to be used on a Hamiltonian boundary value problem? 
In this paper we show that symplectic integrators preserve bifurcations of Hamiltonian boundary value problems and that nonsymplectic integrators do not.

\subsection{The Bratu problem - an example of Hamiltonian boundary value problem}


A reaction-diffusion model for combustion processes is given by the PDE
\begin{equation}\label{BratuPDE}u_t = u_{xx}+Ce^u, \qquad u(t,0)=0=u(t,1),\end{equation}
with parameter $C > 0$. 
Finding steady-state solutions $x \mapsto u(x)$ of \eqref{BratuPDE}, i.e.\ solving the Dirichlet problem
\begin{equation}\label{eq:BratuODE}
u_{xx}+Ce^u=0, \quad u(0)=0=u(1)
\end{equation}
and analysing their bifurcation behaviour is known as the Bratu problem \cite{Mohsen201426}. For small, positive $C$ there are two solutions which undergo a fold bifurcation as $C$ increases (figure \ref{fig:BratuSol}).
\begin{figure}
\begin{center}
\includegraphics[width=0.4\textwidth]{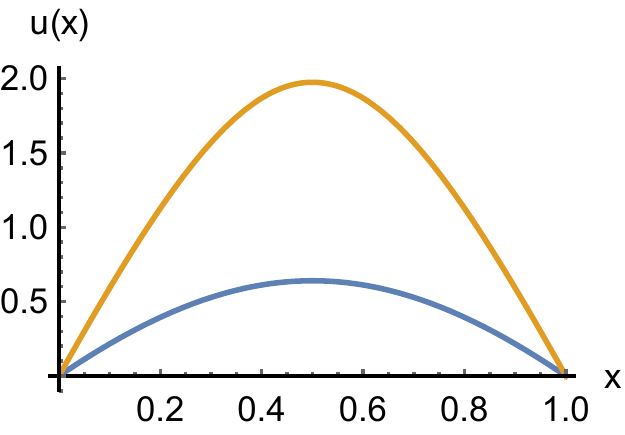}
\quad
\includegraphics[width=0.4\textwidth]{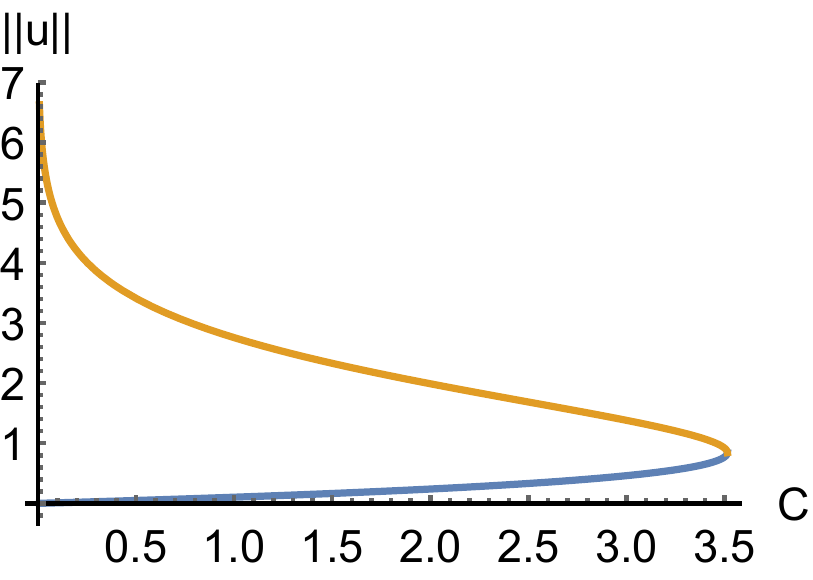}
\end{center}
\caption{Left plot: solutions to the Bratu problem \eqref{eq:BratuODE} for $C=3$. Right plot: bifurcation diagram of the Bratu problem \eqref{eq:BratuODE} showing a fold bifurcation at $C\approx 3.51$. }\label{fig:BratuSol}\label{fig:BratuBifur}
\end{figure}
A Hamiltonian formulation of \eqref{eq:BratuODE} is given as the first order system
\begin{align}\label{eq:HamEq}
\dot q &= \; \; \, \nabla_p H(q,p) = p\\ \nonumber
\dot p &= -\nabla_q H(q,p) = - C e^q
\end{align}
together with the boundary condition $q(0)=0=q(1)$ for the Hamiltonian
\begin{equation}\label{eq:HamBratu}
H(q,p)=\frac 12 p^2 + C e^q
\end{equation}
defined on $T^\ast\R \cong \R^2$.
The boundary value problem is visualised in figure \ref{fig:BratuPhase}.
A Hamiltonian motion, i.e.\ a solution curve to \eqref{eq:HamEq}, solves the boundary value problem $q(0)=0=q(1)$ if it starts on the line $\Lambda=\{(0,p)\,|\, p \in \R\}$ and returns to $\Lambda$ after time 1. 
For $0<C <C^\ast$ two such solutions are illustrated as black curves starting at $\times$ and ending at $o$.

\begin{figure}
\begin{center}
\includegraphics[width=0.4\textwidth]{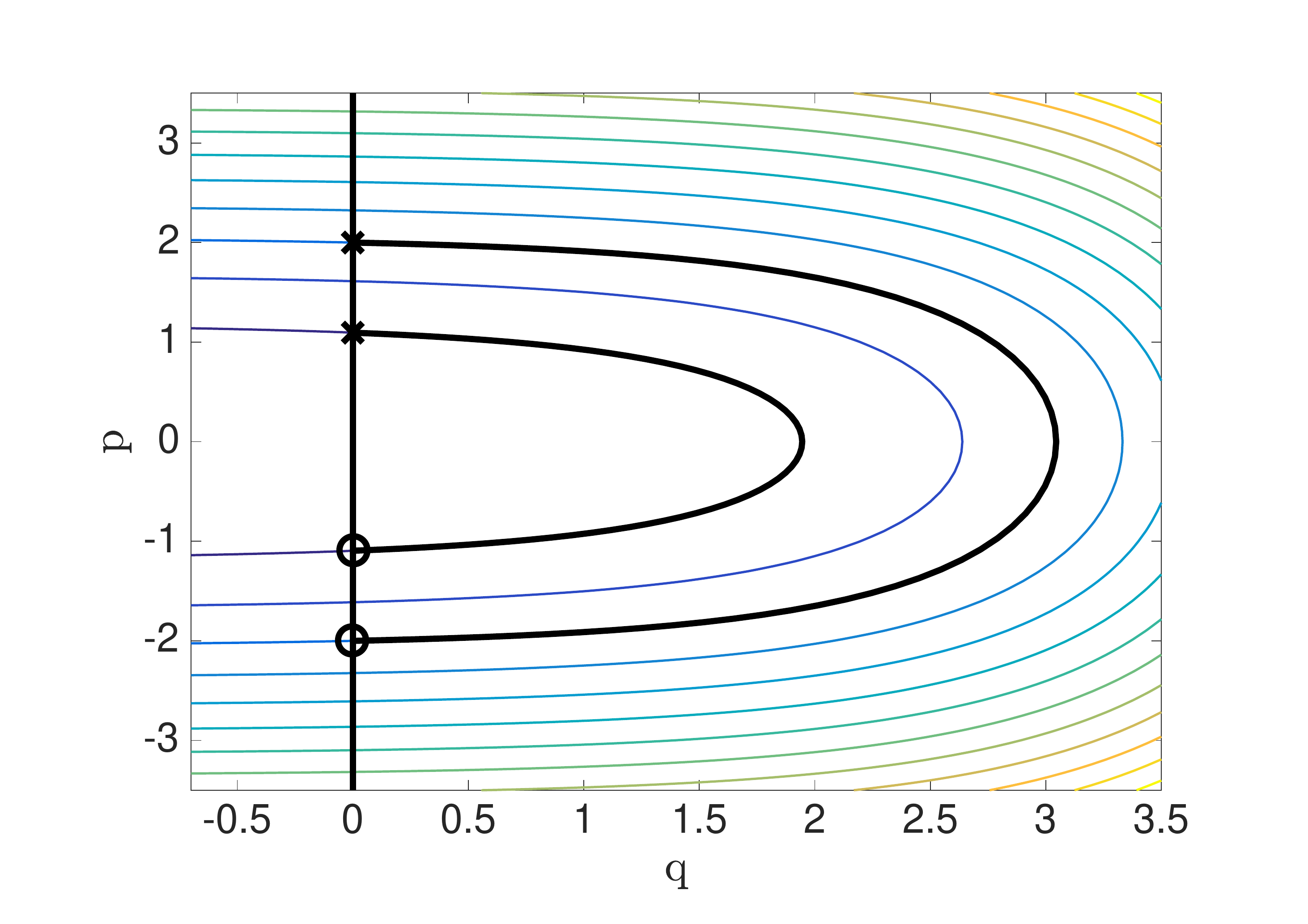}
\end{center}
\caption{Illustration of the Bratu problem \eqref{eq:BratuODE} as a boundary value problem for the Hamiltonian system defined by \eqref{eq:HamBratu}.}\label{fig:BratuPhase}
\end{figure}

\subsection{Purpose of the paper}

We would like to

\begin{itemize}
\item
understand which bifurcations occur in Hamiltonian boundary value problems
\item
and develop strategies to capture these numerically.
\end{itemize}

To attack the first task, the authors have linked all generic bifurcations occurring in smooth parameterised families of Lagrangian boundary value problems for symplectic maps with catastrophe theory in \cite{bifurHampaper}. The Bratu problem and classical Dirichlet-, Neumann-, Robin- boundary value problems are instances of this class.
A treatment with a focus on the symplectic geometrical picture is \cite{offen2018}.
The purpose of this paper is to present results the authors obtained for the second objective.

To draw valid conclusions from numerical results one has to make sure that the bifurcations in the boundary value problem for the exact flow are still present after discretisation.
This is important for two reasons: bifurcations of high codimension act as \textit{organising centres} (see \cite[Part I, Ch.7]{Gilmore1993catastrophe}) in the bifurcation diagram. A high codimensional bifurcation determines which lower codimensional bifurcations occur in a neighbourhood of the singular point.
It is, therefore, desirable to capture these correctly. Furthermore, bifurcation diagrams are typically calculated using \textit{continuation methods}: a branch of low codimensional bifurcations is followed numerically to find a bifurcation of higher codimension but these can only be detected correctly if they are not broken in the discretised boundary value problem.
This means, preservation of the bifurcation behaviour is a goal in its own right but also crucial for computations.


Symplecticity in Hamiltonian boundary value problems does not seem to have been addressed in the literature, even in very detailed numerical studies like \cite{Beyn2007,GalanVioque2014}.
The AUTO software \cite{AutoManual} is based on Gauss collocation, which is symplectic when the equations are presented in canonical variables. The two-point boundary-value codes MIRKDC \cite{MIRKDC} and TWPBVP and TWPBVPL \cite{TWPBVPL} are based on non-symplectic Runge-Kutta methods. MATLAB’s {\tt bvp4c} uses 3-stage Lobatto IIIA \cite{KierzenkaBvP4c}, which is not symplectic. Note that symplectic integration sometimes requires the use of implicit methods. For initial value problems, these are typically computationally more expensive than explicit methods. But for boundary value problems solved in the context of parameter continuation, this distinction largely disappears as excellent initial approximations are available.

Hamiltonian structure can be understood as the existence of a variational principle. In \cite{PDEbifur} the authors use the variational viewpoint to extend ideas of this paper to a PDE setting.

\subsection{Introduction of Lagrangian boundary value problems and their connection to catastrophe theory}

Let us recall results from \cite{bifurHampaper} and provide more examples of Lagrangian boundary conditions.


\begin{definition}[Lagrangian boundary value problem for a symplectic map]\label{def:LagBVP}
Consider a symplectic map $\phi\colon (M,\omega) \to ( M' ,  \omega')$ and projections $\pi \colon M\times  M \to M$ and $ \pi' \colon M\times  M' \to  M'$. Define the symplectic form $\omega \oplus (- \omega'):= \pi^\ast\omega - {\pi'}^\ast \omega'$ on the manifold $M \times  M'$. 
The graph $\Gamma$ of $\phi$ constitutes a Lagrangian submanifold in the symplectic manifold $(M \times  M', \omega \oplus (- \omega'))$. 
Let $\Pi$ be another submanifold in $(M \times  M', \omega \oplus (-\omega'))$. The intersection of $\Gamma$ with $\Pi$ is called a \textit{Lagrangian boundary value problem (for $\phi$)} if and only if $\Pi$ is a Lagrangian submanifold.
\end{definition}


\begin{example} If $(M,\omega)=( M',  \omega')$ than the periodic boundary value problem $\phi(z) = z$ is a Lagrangian boundary value problem. Here $\Pi=\{(m,m)\, |\, m \in M\}$ is the diagonal.\end{example}

\begin{example}
Let $M$ be an open subset of $\R^{2n}$ with the standard symplectic form $\omega=\sum_{j=1}^n \d x^j \wedge \d y_j$ and a symplectic map $(x,y)\mapsto\phi(x,y)$ on $M$ with $x=(x^1,\ldots,x^n)$-component $\phi^X = x\circ \phi$. Fix $x^\ast,X^\ast \in \R^n$. The boundary value problem
\begin{equation}\label{eq:Dirichletforphi}
\phi^X(x^\ast,y) = X^\ast
\end{equation}
is a Lagrangian boundary value problem.
\end{example}

\begin{example}[Dirichlet-/Neumann-/Robin problems]\label{ex:DNRbdflow}
The classical Dirichlet problem
\begin{equation}\label{eq:DiribdcondODE}
u(t_0)=x^\ast, \quad u(t_1)=X^\ast
\end{equation}
with $t_0<t_1 \in \R$, $x^\ast,X^\ast \in \R^n$ for the second order ordinary differential equation
\begin{equation}\label{eq:2ndorderODE}
\ddot u(t) = \nabla_u G(t,u(t))
\end{equation}
with a scalar-valued map $G$ defined on a sufficiently large subdomain of $\R \times \R^n$ can be formulated as in \eqref{eq:Dirichletforphi}: let us rewrite \eqref{eq:2ndorderODE} as the first order problem
\begin{align}\label{eq:1storderODE}
\dot u(t) &= v(t) \\ \nonumber
\dot v(t) &=\nabla_u G (t,u(t))
\end{align}
and denote by $\phi$ the map which sends a point $(x,y)$ taken from a subdomain of $\R^{2n}$ to the solution of the initial value problem \eqref{eq:1storderODE} with initial data
\[
u(t_0)=x, \quad v(t_0)=y
\]
(assuming existence and uniqueness). The map $\phi$ is symplectic because it arises as the Hamiltonian flow of
\begin{equation}\label{eq:mechanicalHam}
H(t,x,y)=\frac 12 \|y\|^2 - G(t,x).
\end{equation}
Now the problem \eqref{eq:2ndorderODE} with Dirichlet boundary conditions \eqref{eq:DiribdcondODE} corresponds to \eqref{eq:Dirichletforphi}. The boundary value problem can be visualised as in the plot to the left of figure \ref{fig:DirichletFish}. The manifold $\Pi$ is $\Pi=\{x^\ast\} \times \R \times \{X^\ast\} \times \R$.
\begin{figure}
\begin{center}
\includegraphics[width=0.32\textwidth]{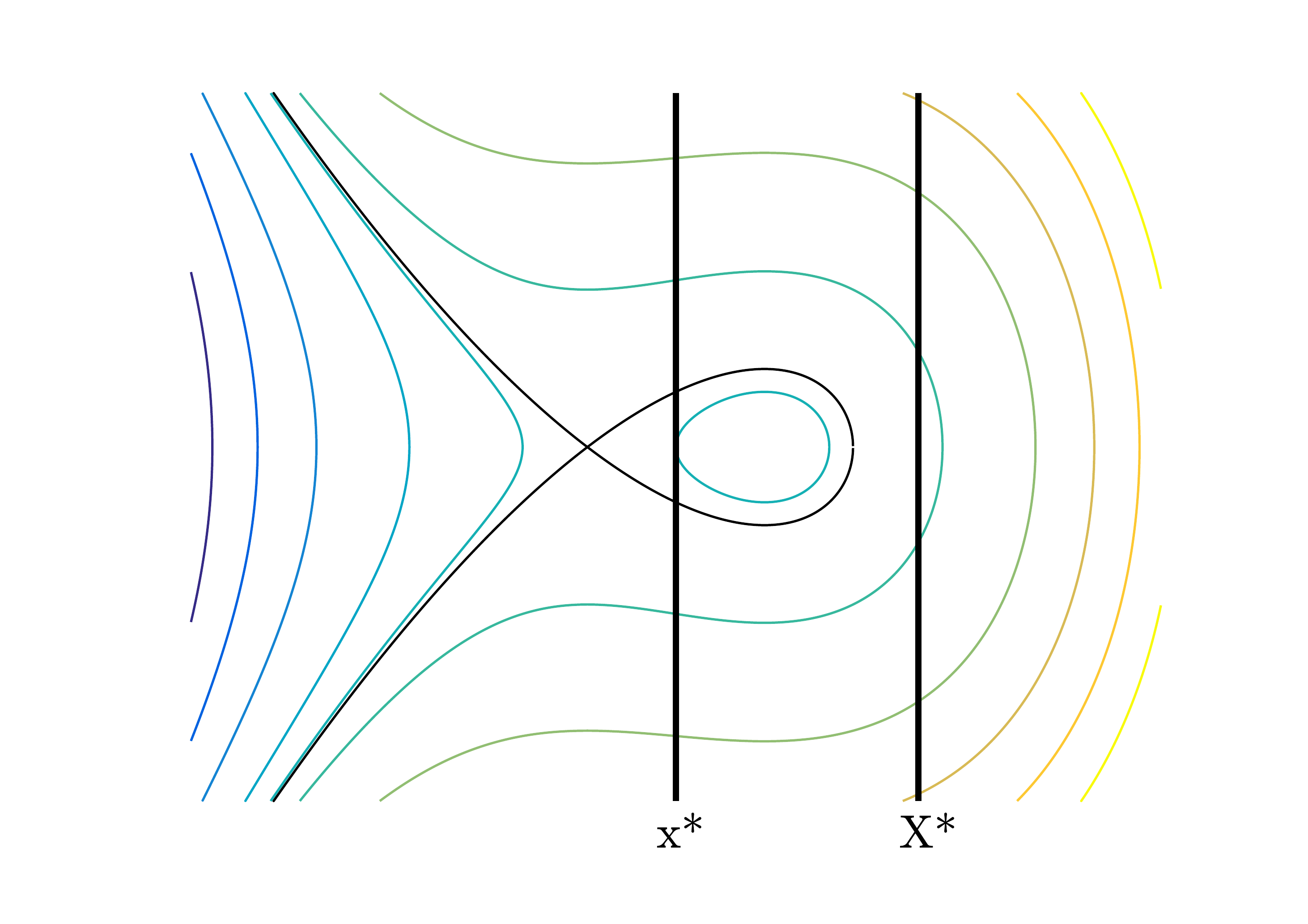}
\includegraphics[width=0.32\textwidth]{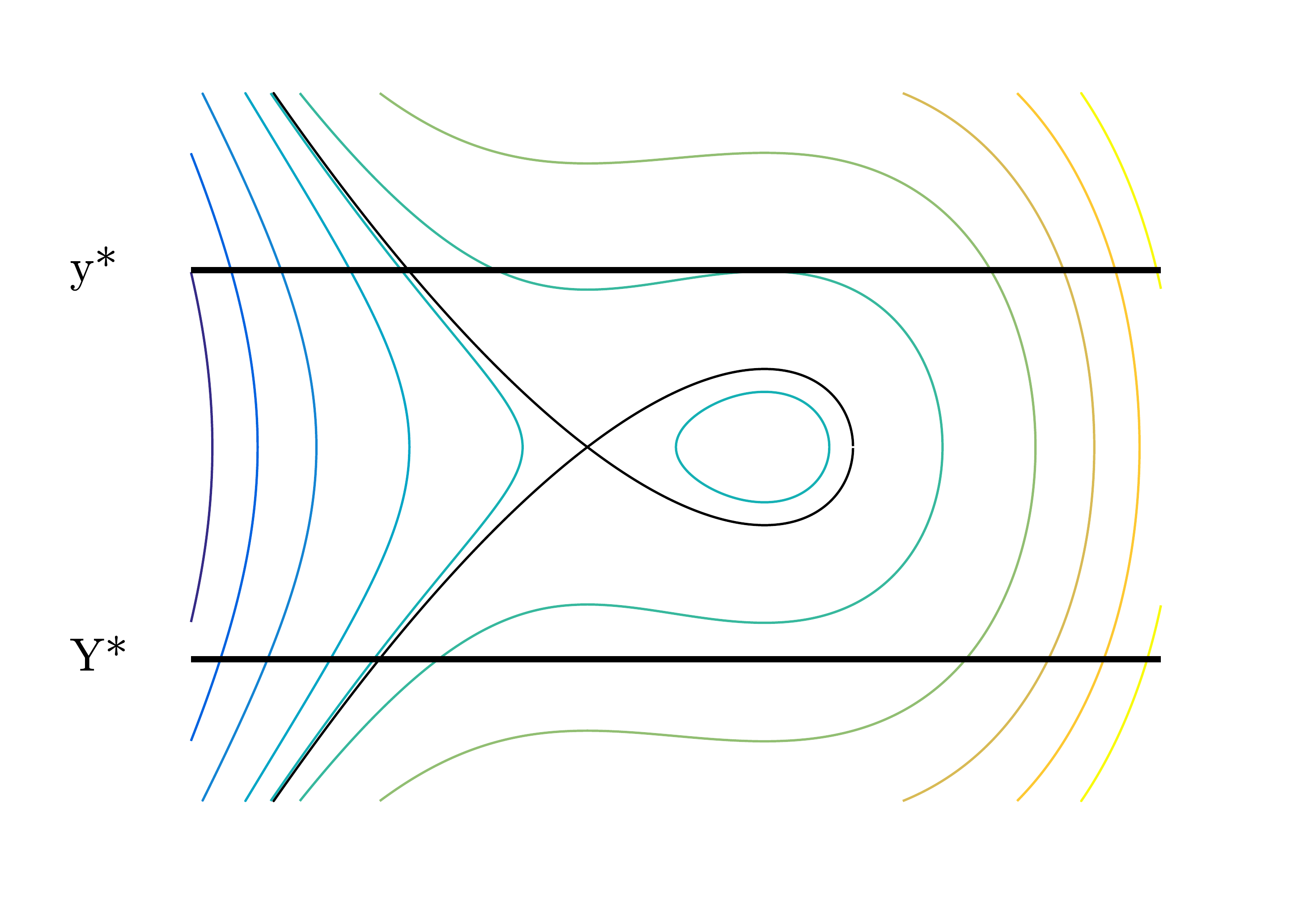}
\includegraphics[width=0.32\textwidth]{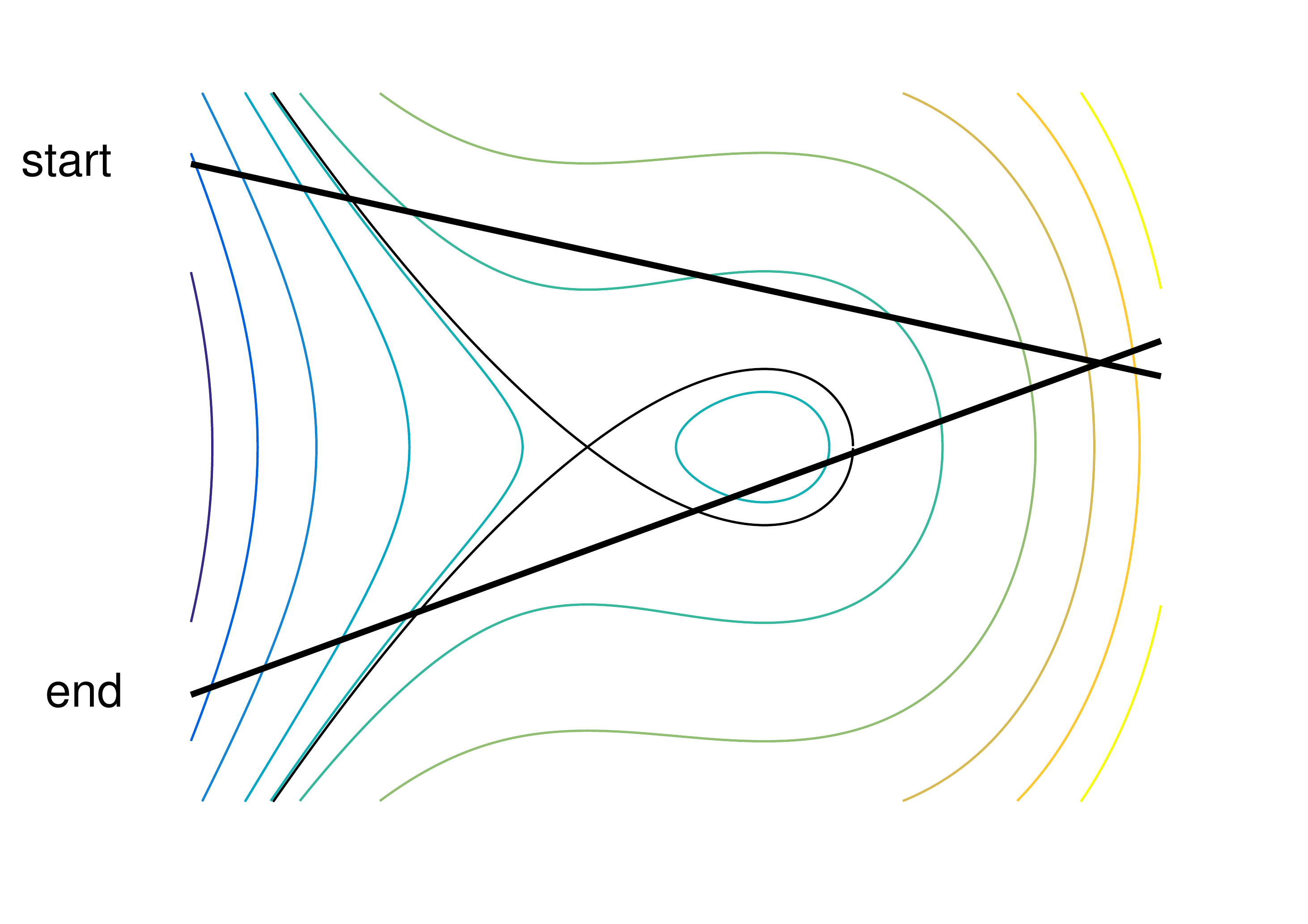}
\end{center}
\caption{Illustration of Dirichlet- \eqref{eq:DiribdcondODE}, Neumann- \eqref{eq:NeumancondODE} and Robin- \eqref{eq:RobinBD} boundary conditions for the ODE \eqref{eq:2ndorderODE} with the potential $G(t,u)=\tilde G(u)=-u^3-\lambda u$ as a Lagrangian boundary value problem for the time $t_1-t_0$-flow map of the Hamiltonian system \eqref{eq:mechanicalHam}. A motion solves the boundary value problem if it starts on the line $x^\ast\times \R$ and ends on $X^\ast\times \R$ after time $t_1-t_0$ (Dirichlet), or starts on $\R\times y^\ast$ and ends on $\R\times Y^\ast$ (Neumann) or starts on the start-line and ends on the end-line (Robin).}\label{fig:DirichletFish}\label{fig:NeumannFish}\label{fig:RobinFish}
\end{figure}
Analogously, a Neumann problem 
\begin{equation}\label{eq:NeumancondODE}
\dot u(t_0)=y^\ast, \quad \dot u(t_1)=Y^\ast
\end{equation}
with $y^\ast,Y^\ast \in \R^n$
corresponds to the Lagrangian boundary value problem
\begin{equation}\label{eq:Neumannforphi}
\phi^Y(x,y^\ast) = Y^\ast,
\end{equation} 
where $\phi^Y = y \circ \phi$ is the $y=(y_1,\ldots,y_n)$-component of $\phi$. The plot in the centre of figure \ref{fig:NeumannFish} shows a visualisation of the boundary condition in the phase space of the Hamiltonian system. Here the manifold $\Pi$ is $\Pi = \R \times \{y^\ast\}\times \R \times \{Y^\ast\}$.
Moreover, if we denote the $j$-th component of a solution $u$ to \eqref{eq:2ndorderODE} by $u^j$ then Robin-type boundary conditions
\begin{align}\label{eq:RobinBD}
u^j(t_0)+\alpha_0^j \dot u^j(t_0) &= \beta_0^j\\ \nonumber
u^j(t_1)+ \alpha_1^j \dot u^j(t_1) &= \beta_1^j, \qquad j \in \{1,\ldots,n\}
\end{align}
for $\alpha_0^j,\alpha_1^j,\beta_0^j,\beta_1^j\in \R$ are Lagrangian boundary conditions. A visualisation is shown in the plot to the right in figure \ref{fig:RobinFish}. Here $\Pi = \{(x,y,X,Y)\, | \, x^j+\alpha_0^j y^j = \beta_0^j,\, X^j+ \alpha_1^j Y^j = \beta_1^j\}$.




\end{example}

\begin{example}[Non-example] As before, let $\omega=\sum_{j=1}^n \d x^j \wedge \d y_j$ be the standard symplectic form on $\R^{2n}$. Consider a symplectic map $\phi \colon (\R^{2n},\omega) \to (\R^{2n},\omega)$ and the problem
\[
\phi(x,y)=(y,x).
\]
This problem does \textit{not} fulfil the definition of a Lagrangian boundary value problem because it corresponds to the intersection of the Lagrangian manifold $\Gamma$ with the non-Lagrangian submanifold $\Pi = \{(x,y,y,x)\, | \, (x,y) \in \R^{2n}\}$.
\end{example}


Parameter dependent changes of critical points in smooth families of smooth maps
\[
g_\mu\colon \R^k \to \R, \quad z \mapsto g_\mu(z),\]
are called \textit{critical-points-of-a-function} problems or \textit{gradient-zero} problems. They have been treated under the headline \textit{catastrophe theory} and generic bifurcation phenomena have been classified. 

In \cite{bifurHampaper} the authors use generating functions of the considered symplectic maps and the Lagrangian boundary conditions to assign local critical-points-of-a-function problems to local Lagrangian boundary value problems. An introduction to generating functions can be found in \cite[VI.5]{GeomIntegration}. The idea is illustrated in the following example.

\begin{example}\label{ex:bvpFold}
Consider a symplectic map $(X,Y) = \phi_\mu(x,y)$ on $\R^{2n}$ with a generating function of the form 
\begin{equation}\label{eq:generatingFunctionExample}
\begin{split}
x &= \phantom{-}\nabla_y S_\mu(y,Y)\\
X &= -\nabla_Y S_\mu(y,Y).
\end{split}
\end{equation}
The 2-point boundary value problem $x=0$, $X=0$ corresponds to the critical-points problem $\nabla S_\mu(y,Y)=0$. For a concrete example consider the family of functions \[S_\mu(y,Y) = y_1^3+\mu y_1 + \sum_{j=1}^n(Y_j+y_j)^2 + \sum_{j=2}^n y_j^2.\] The matrix $\left(\frac{\p^2 S_\mu}{\p y_i \p Y_j}\right)_{i,j = 1,\ldots,n}$ is invertible such that $S_\mu$ induces a family of symplectic maps $(X,Y) = \phi_\mu(x,y)$ via \eqref{eq:generatingFunctionExample}. By construction, $S_\mu$ is a generating function for $\phi_\mu$. The problem $x=0$, $X=0$ corresponds to $\nabla S_\mu(y,Y)=0$ which is equivalent to
\[
\nabla (y_1^3+\mu y_1) =0, \qquad y_2,\ldots,y_n,Y_2,\ldots,Y_n =0, \qquad y_1 = -Y_1.
\]
In other words, solutions to the boundary value problem correspond to critical points of the function $y_1 \mapsto y_1^3+\mu y_1$. See figure \ref{fig:FoldNormal} for an illustration.
\end{example}

\begin{figure}
\begin{center}
\includegraphics[width=0.4\textwidth]{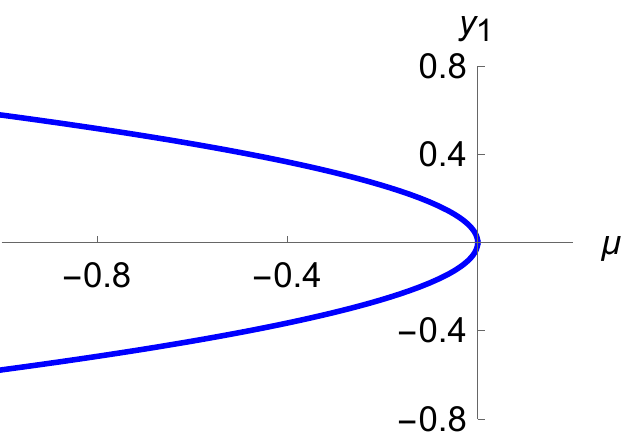}
\end{center}
\caption{The boundary value problem described in example \ref{ex:bvpFold} undergoes a {\em fold bifurcation} as the parameter $\mu$ is varied, i.e.\ two solution branches annihilate each other. At $(\mu,y_1)=(0,0)$ there is a fold point. }\label{fig:FoldNormal}
\end{figure}

A more general way of establishing the correspondence is as follows.
\begin{itemize}
\item
To any intersection point $p$ of the graph $\Gamma$ and the boundary condition $\Pi$ there exists an open neighbourhood $U$ of $p$ and a symplectomorphism $\Psi$ which maps $U$ to an open neighbourhood in the cotangent bundle $\pi \colon T^\ast\Pi \to \Pi$ such that $\Pi \cap U$ is mapped into the zero section of $T^\ast\Pi$ and the restriction of $\pi$ to $\Psi(\Gamma \cap U)$ is an injective immersion.
\item
The Lagrangian manifold $\Psi(\Gamma \cap U)$ is the image of $\pi(\Psi(\Gamma \cap U))$ under a 1-form $\beta$ on $\pi(\Psi(\Gamma \cap U))$ which can be considered as a map $\pi(\Psi(\Gamma \cap U)) \to  T^\ast\Pi$.
\item
The manifold $\Psi(\Gamma \cap U)$ is Lagrangian. Therefore, $\beta$ admits a primitive $S$, defined on an open neighbourhood of $\Psi(p)$ in the zero section of $T^\ast\Pi$. 
\item
Critical points of $S$ correspond to solutions of the Lagrangian boundary value problem.
\end{itemize}

A classification due to Thom building on the work of Whitney asserts that all generic singularities in the critical-points-of-a-function problem with no more than 4 parameters are stably right-left equivalent to one of the seven elementary catastrophes given in table \ref{tab:ThomCatastrophes}.
%
%
%
%
%
\begin{table}
\begin{center}
  \begin{tabular}{| c | l | c | r | c |}
    \hline
	ADE class & name & germ & miniversal unfolding \\
    \hline
    \hline
$A_2$ &fold & $x^3$ & $x^3+ \mu_1 x$ \\
$A_3$ &cusp & $x^4$ & $x^4+\mu_2 x^2+ \mu_1 x$ \\
$A_4$ &swallowtail & $x^5$ & $x^5+\mu_3 x^3+\mu_2 x^2+ \mu_1 x$  \\
$A_5$ &butterfly & $x^6$ & $x^6+\mu_4 x^4+\mu_3 x^3+\mu_2 x^2+ \mu_1 x$  \\
\hline 
$D_4^+$ &hyperbolic umbilic & $x^3+xy^2$ & $ x^3+xy^2 + \mu_3 (x^2-y^2) +\mu_2 y + \mu_1 x$  \\
$D_4^-$ &elliptic umbilic & $x^3-xy^2$ & $ x^3-xy^2 + \mu_3 (x^2+y^2) +\mu_2 y + \mu_1 x$  \\
$D_5$ &parabolic umbilic & $x^2y+y^4$ & $ x^2y+y^4 +\mu_4 x^2  + \mu_3 y^2 +\mu_2 y + \mu_1 x$  \\
 \hline
  \end{tabular}
\caption{Thom's seven elementary catastrophes \cite[p.89]{lu1976singularity},\cite[p.66]{Gilmore1993catastrophe}. Generically$^2$
, a family of functions $g_\mu\colon \R^k \to \R$ locally around zero coincides up to parameter-dependent changes of coordinates either with a quadratic form in $k$ variables or is of the form $\hat g_\mu(x) + Q(y,z_1,\ldots,z_{k-2})$ or $\hat g_\mu(x,y) + Q(z_1,\ldots,z_{k-2})$, where $\hat g$ is one of the miniversal unfoldings given in the table above and $Q$ is a quadratic form in the remaining variables provided that the parameter $\mu$ is at most 4-dimensional. Illustrations of the first three catastrophes can be found in figure \ref{fig:AseriesIllus}. Illustrations of the hyperbolic and elliptic umbilic appear later in the paper in figure \ref{fig:breakD4m} and \ref{fig:breakD4p}.}
\label{tab:ThomCatastrophes}
\end{center}
\end{table}
Well-presented lecture notes explaining the notion of right-left/stably equivalences, introducing the necessary algebraic tools and giving a proof of Thom's result are given in \cite{lu1976singularity}. A reference including the extended classification by Arnold is \cite{Arnold1}. 
An elementary reference with an extensive analyses of singularities of low multiplicity is \cite{Gilmore1993catastrophe}. 

\begin{figure}
\begin{center}
\includegraphics[width=0.2\textheight]{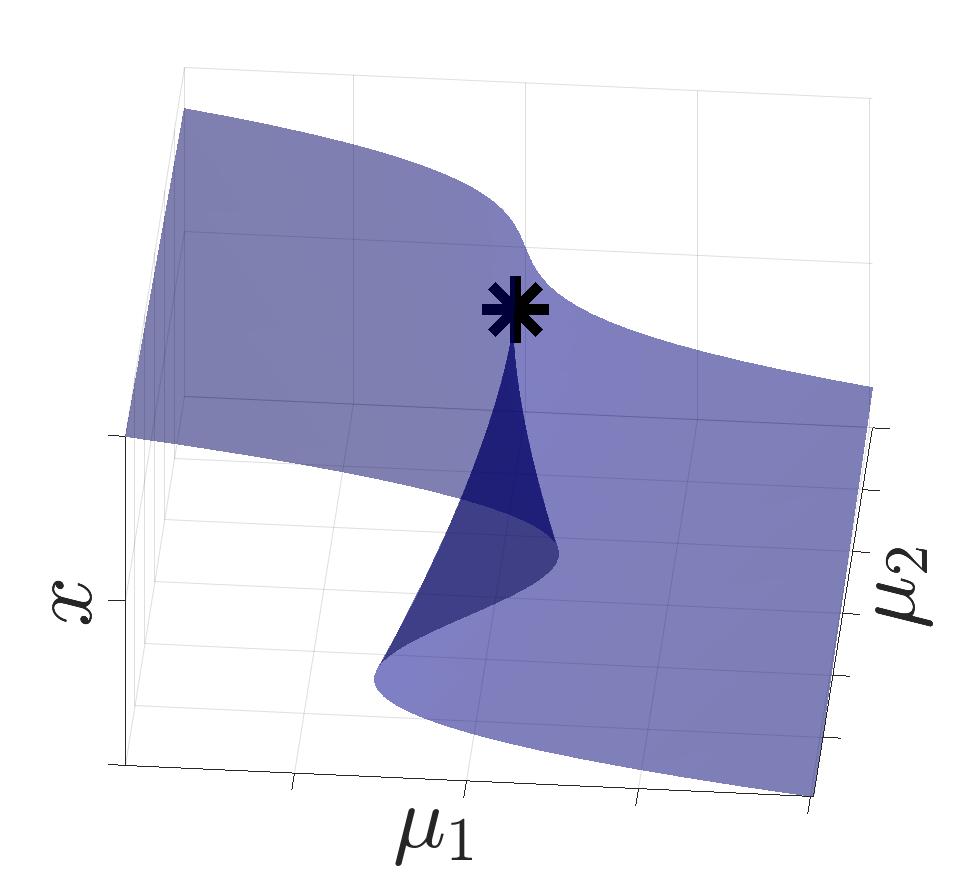}\;
\includegraphics[width=0.2\textheight]{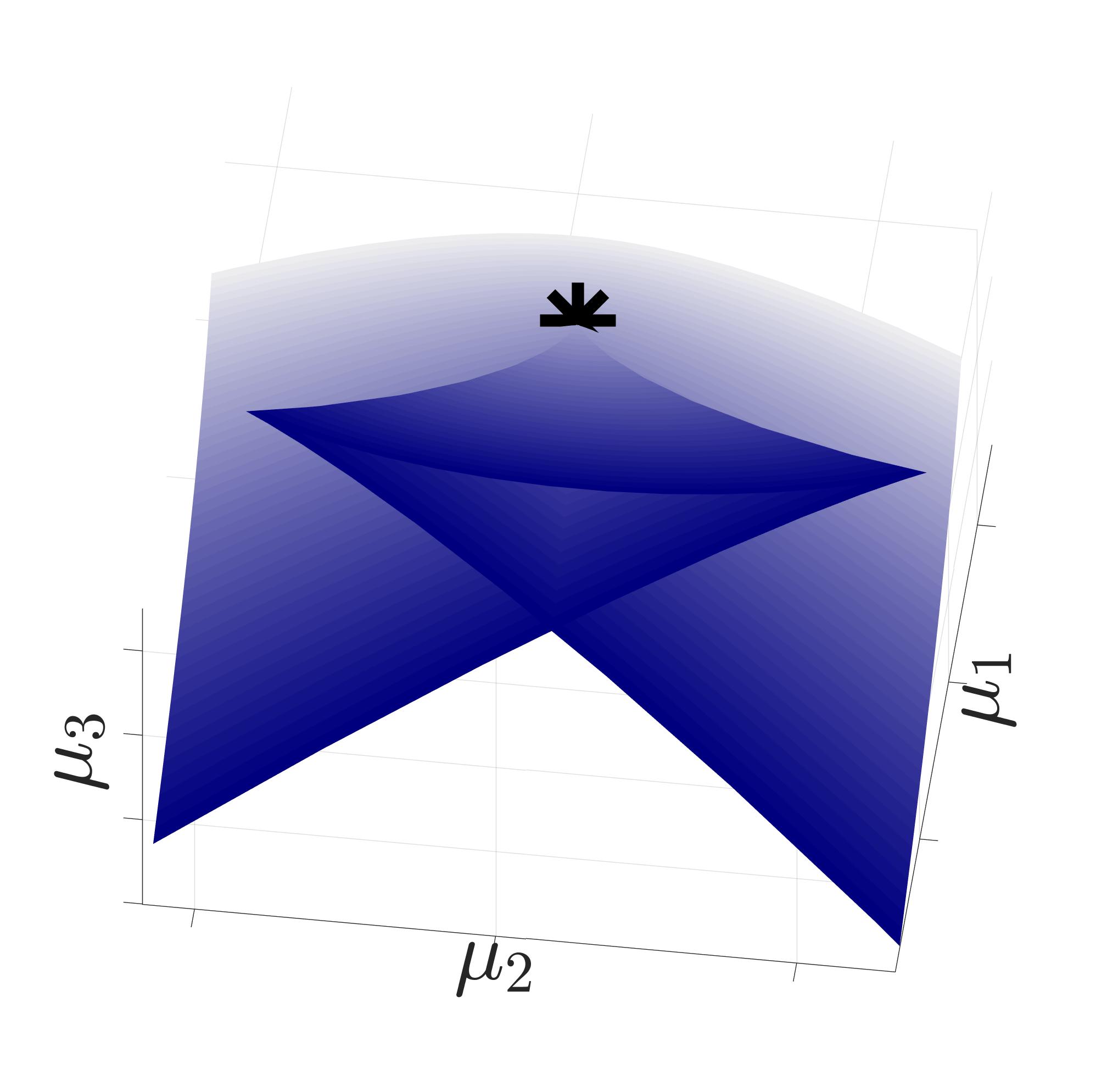}
\end{center}
\caption{The diagrams show the critical points of the miniversal unfoldings of the cusp ($A_3$) and the swallowtail bifurcation ($A_4$) (from left to right). The most singular points are denoted by $\ast$. For an illustration of a fold bifurcation see figure \ref{fig:FoldNormal}. The plot to the right shows a projection of a swallowtail bifurcation ($A_4$) to the parameter space. Each point in the sheet corresponds to a fold singularity, an intersection of sheets means that two fold singularities happen at the same parameter value but at different points $x$ in the phase space, points on the two edges correspond to cusp points. The point where the two edges join with the intersection line is a swallowtail point marked by $\ast$. See \cite{YoutubeASeries} for an animation of the cusp and swallowtail singularity.}\label{fig:AseriesIllus}
\end{figure}

The gradient-zero problem $\nabla g_{\mu}(z)=0$ is \textit{not} equivalent to the special case $k=l$ in the \textit{zeros-of-a-function} problem $F_\mu(z)=0$ for smooth families of smooth functions $F_\mu \colon \R^{k} \to \R^{l}$. In other words, the gradient structure has an effect on which bifurcations can occur generically, i.e.\ which bifurcations are persistent under small perturbations. The zeros-of-a-function problem is treated in \cite{TopoStability}.
In the lectures by Wall \cite{walllectures} the technique of unfolding singularities\footnote{truncated miniversal deformations in Arnold's nomenclature} in the gradient-zero problem and in the roots-of-a-function problem are treated within the same framework.

Our paper is structured as follows.

\addtocounter{footnote}{1}\footnotetext{The classification applies to a dense set w.r.t.\ the Whitney $C^\infty$-topology on the space of smooth function families $\R^m\times\R^k\to \R$, where $m \le 4$ is the dimension of the parameter $\mu$  \cite{TopoStability}.}

\begin{itemize}
\item
In section \ref{sec:brokenBifur} we analyse how generic bifurcations of the gradient-zero problem break if perturbed with a map which does not admit a primitive. This models the effect of using a non-symplectic integrator to solve Lagrangian boundary value problems in Hamiltonian systems.
\item
In section \ref{sec:DirichletExtra} we analyse separated Lagrangian boundary conditions which include classical Dirichlet-, Neumann- and Robin- boundary conditions.
\begin{itemize}
\item
We describe structure present in data which is computed when numerically solving separated Lagrangian boundary value problems. This can be helpful to locate bifurcations. As an example, a $D$-series bifurcation in a H{\'e}non-Heiles type Hamiltonian system is calculated numerically. 
\item
We explain the role of symplecticity in the computation of conjugate loci and illustrate how the RATTLE method can be used in this context.

\end{itemize}


\item
In section \ref{sec:capturePitchfork} we describe how periodic pitchfork bifurcations (introduced by the authors in \cite[Thm. 3.2]{bifurHampaper}) in completely integrable Hamiltonian systems can be captured numerically. This involves
\begin{itemize}
\item
a description of the bifurcation mechanism of the exponentially small broken pitchfork bifurcation in the numerical flow of a planar Hamiltonian system,
\item
the development of a non-trivial, analytical 4-dimensional model system with a periodic pitchfork bifurcation and numerical experiments showing that the bifurcation in the numerical flow is broken up to the order of the integrator,
\item
theoretical considerations showing that the pitchfork bifurcation is captured exponentially well by a symplectic integrator in important cases of completey integrable systems like planar systems or systems with affine-linear integrals of motion. In generic completely integrable systems, however, the bifurcation is captured up to the order of the integrator used to discretise Hamilton's equations.
\end{itemize}
\end{itemize}

\section{Broken gradient-zero bifurcations}\label{sec:brokenBifur}

In applications symplectic maps arise as flow maps of Hamiltonian systems (Hamiltonian diffeomorphisms). These can be discretised using different numerical integrators.

\begin{definition}[symplectic integrator]
A symplectic integrator assigns to a time-step-size $h>0$ (discretisation parameter) and a Hamiltonian system a symplectic map which approximates the time-$h$-map of the Hamiltonian flow of the system. 
\end{definition}

\begin{remark}
For a finite sequence of positive time-step-sizes $h_1,\ldots,h_N$ summing to $\tau$ the composition of all time-$h_j$-map approximations obtained by a symplectic integrator yields an approximation to the Hamiltonian-time-$\tau$-map, which is a symplectic map.
\end{remark}

\begin{remark}[non-symplectic integrator]
When using the term non-symplectic integrator (applied to a Hamiltonian system) we require the obtained approximations of Hamiltonian-time-$\tau$-maps to be non-symplectic maps on the phase space. This excludes non-generic examples where an approximation happens to be symplectic.
\end{remark}

The solutions to a family of Hamiltonian boundary value problems on $2n$-dimensional manifolds locally corresponds to the roots of a family of $\R^{2n}$-valued function defined on an open subset of $\R^{2n}$. For a Lagrangian Hamiltonian boundary value problems these maps are exact, i.e.\ each arises as the gradient of a scalar valued map \cite{bifurHampaper}. 
Consider a family of Hamiltonian Lagrangian boundary value problems
and consider an approximation of the Hamiltonian-time-$\tau$-map by an integrator. Roughly speaking, two map-families are (right-left-) equivalent if they coincide up to reparametrisation and parameter dependent changes of variables in the domain and target space.\footnote{One can consider different equivalence relations. Our discussion applies to right-left equivalence. It can also be applied to left-equivalence (finer) or any coarser notion like contact equivalence.}
If the family of maps corresponding to the approximated problems is equivalent to the family of maps for the exact problem then we say \textit{the integrator preserves the bifurcation diagram of the problem}. This means the computed bifurcation diagram qualitatively looks the same as the exact bifurcation diagram.


\begin{prop}\label{prop:capturebifur}
A symplectic integrator with any fixed (but not necessarily uniform) step-size, applied to any autonomous or nonautonomous Hamiltonian Lagrangian boundary value problem, preserves bifurcation diagrams of generic bifurcations of any codimension for sufficiently small maximal step-sizes.
\end{prop}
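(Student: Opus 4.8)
The plan is to exploit the defining property of a symplectic integrator: the approximate time-$\tau$ map $\phi_h$ it produces is again a symplectic map. Consequently the discretised boundary value problem --- the intersection of the graph $\Gamma_h$ of $\phi_h$ with the \emph{same} Lagrangian submanifold $\Pi$ --- is itself a Lagrangian boundary value problem in the sense of Definition~\ref{def:LagBVP}. By the local reduction recalled before the statement (the symplectomorphism $\Psi$ onto $T^\ast\Pi$ sending $\Pi$ into the zero section, and the primitive $S$ of the resulting $1$-form $\beta$ on $\Psi(\Gamma \cap U)$), the exact and the discretised families both correspond, near an intersection point $p$, to \emph{gradient-zero} problems $\nabla S_\mu(z)=0$ and $\nabla S_\mu^h(z)=0$ for generating functions $S_\mu, S_\mu^h$. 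Since $\Psi$ depends only on $\Pi$ and the injective-immersion condition on the graph is open, one may fix a single neighbourhood $U$ of $p$ and a single $\Psi$ that serve $\phi$ and all $\phi_h$ with $h$ below a threshold, so that $S_\mu$ and $S_\mu^h$ live on a common domain.

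The analytic heart of the argument is the convergence $S_\mu^h \to S_\mu$. A symplectic integrator of order $p$ satisfies $\phi_h=\phi+O(h^p)$; because the integrator depends smoothly on the initial data and on the system parameter $\mu$, this estimate holds together with all derivatives in $(z,\mu)$ on the compact closure $\overline U$, so that $\phi_h \to \phi$ in the $C^\infty$ topology, uniformly in $\mu$. As the construction of $\beta$ and of its primitive $S$ is smooth in the underlying map, one concludes $S_\mu^h \to S_\mu$ in $C^\infty$ on $\overline U$, uniformly in $\mu$.

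It then remains to invoke structural stability from catastrophe theory. By genericity the exact family $S_\mu$ is, up to right-left equivalence, a truncated miniversal unfolding of one of Thom's catastrophes of Table~\ref{tab:ThomCatastrophes}. Such germs are finitely determined and their miniversal unfoldings are structurally stable: there is a $C^k$-neighbourhood, for some finite $k$ depending only on the catastrophe, of the family $S_\mu$ all of whose members are right-left equivalent to it. For $h$ small enough the convergence above places $S_\mu^h$ inside this neighbourhood, whence $S_\mu^h$ is right-left equivalent to $S_\mu$ and the bifurcation diagram is preserved.

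I expect the main obstacle to be the second step: upgrading the order-$p$ consistency estimate to genuine $C^k$ convergence \emph{with smooth dependence on $\mu$}, and then carrying this convergence through the non-canonical, only locally defined generating-function construction to obtain $C^k$ convergence of $S_\mu^h$. The finite determinacy of the generic catastrophes is exactly what makes this manageable --- only finitely many derivatives must be controlled, so $C^\infty$ convergence is far more than enough and the full Whitney $C^\infty$ topology never needs to be confronted.
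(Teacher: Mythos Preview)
Your proposal is correct and follows essentially the same approach as the paper: observe that a symplectic integrator yields a nearby symplectic map, reduce the Lagrangian BVP to a gradient-zero problem via generating functions, and invoke the structural stability of generic catastrophes within that class. The paper compresses all of this into a one-line citation of \cite{bifurHampaper} for the stability statement, whereas you have unpacked the mechanism (including the $C^k$-convergence of generating functions and finite determinacy) that makes that citation work; one small caveat is that your reference to Table~\ref{tab:ThomCatastrophes} covers only codimension $\le 4$, so for the ``any codimension'' claim you should invoke Arnold's extended classification rather than Thom's seven.
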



\begin{proof}[Proof of proposition \ref{prop:capturebifur}] In \cite{bifurHampaper} the authors establish the fact that all generically occurring singularities in Lagrangian boundary value problems for symplectic maps are non-removable under small symplectic perturbations of the map. The statement follows because a Hamiltonian diffeomorphism which is slightly perturbed by a symplectic integrator is a symplectic map near the exact flow map. 
\end{proof}


Proposition \ref{prop:capturebifur} implies that using a symplectic integrator to solve Hamilton's equations in order to solve a Lagrangian boundary value problem we obtain a bifurcation diagram which is qualitatively correct even when computing with low accuracy and not preserving energy.

In contrast, nonsymplectic integrators do not preserve all bifurcations, even for arbitrarily small step-sizes. However, they do preserve the simplest class of $A$-series bifurcations, i.e.\ folds, cusps, swallowtails, butterflies,....

\begin{prop}\label{prop:Aseriespersist} 
A symplectic or non-symplectic integrator with any fixed (but not necessarily uniform) step-size, applied to any autonomous or nonautonomous Hamiltonian Lagrangian boundary value problem, preserves bifurcation diagrams of generic $A$-series singularities for sufficiently small maximal step-sizes. However, each non-symplectic integrator breaks the bifurcation diagram of all generic $D$-series singularities for any positive maximal step-size.\footnote{More generally, a generic singularity $\nabla g(x^1,\ldots,x^n)$ of the exact flow is broken in the numerical flow of a non-symplectic integrator if and only if for a versal unfolding $(g_\mu)_\mu$ of $g$ the family $(\nabla g_\mu(x^1,\ldots,x^n))_\mu$ does not constitute a versal roots-of-a-function-type unfolding.}
\end{prop}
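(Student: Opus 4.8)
The plan is to reduce both assertions to a comparison, at each singular point, between two unfolding problems: the gradient-zero (catastrophe) unfolding that governs the exact, symplectic problem, and the roots-of-a-function unfolding that governs a generic (non-gradient) perturbation. By the generating-function construction recalled before Table~\ref{tab:ThomCatastrophes}, the exact problem is locally the gradient-zero problem $\nabla g_\mu=0$ for the generating function $S=g_\mu$; a symplectic integrator perturbs this by another gradient family, whereas a non-symplectic integrator perturbs the map $\nabla g_\mu$ by a family that is generically \emph{not} a gradient, since a non-symplectic map carries no generating function. The symplectic half of the first sentence is then immediate from Proposition~\ref{prop:capturebifur}. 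For the remaining claims I would fix a generic singularity, invoke the splitting lemma to write $g=\hat g(x_1,\dots,x_l)+Q$ with $l$ the corank and $Q$ nondegenerate, and observe that this reduction uses only the invertibility of the Hessian on the $Q$-block. A small (symplectic or non-symplectic) perturbation keeps that block invertible, so the perturbed map --- gradient or not --- reduces by the implicit function theorem to a map germ in the same $l$ essential variables. The whole question thus localises to whether the gradient of the miniversal catastrophe unfolding of $\hat g$ remains \emph{versal as a deformation of the map germ} $\nabla\hat g\colon\R^l\to\R^l$ under contact equivalence; this is precisely the criterion in the footnote.

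For the $A$-series one has $l=1$. A map germ $\R\to\R$ is trivially the derivative of its antiderivative, so on the reduced level the gradient-zero and roots-of-a-function problems coincide, and a non-gradient perturbation produces nothing new. Concretely I would check that $\tfrac{\d}{\d x}\big(x^{k+1}+\mu_{k-1}x^{k-1}+\dots+\mu_1 x\big)=(k+1)x^{k}+(k-1)\mu_{k-1}x^{k-2}+\dots+\mu_1$ is a contact-versal unfolding of the root germ $x^{k}$: the $\mu$-derivatives span $1,x,\dots,x^{k-2}$, a basis of the Tjurina algebra $\mathcal E_1/(x^{k},x^{k-1})=\mathcal E_1/(x^{k-1})$, whose dimension $k-1$ equals the number of parameters. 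Hence the $A$-series bifurcation diagrams are preserved by both classes of integrators.

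For the $D$-series one has $l=2$, where the gradient maps (those with symmetric Jacobian) form a proper subspace of all map germs $\R^2\to\R^2$, so versality can fail. The key computation is that the contact codimension of the gradient map germ strictly exceeds the catastrophe codimension. For $D_4^{\pm}$, with $F=\nabla\hat g$ and $I=(F_1,F_2)$, one finds $\dim_\R \mathcal E_2^2/\big(DF\cdot\mathcal E_2^2+I\,\mathcal E_2^2\big)=4$, whereas the catastrophe unfolding carries only $3$ parameters. The gradient family is therefore a non-versal deformation of $\nabla\hat g$; a direction transverse to it within the versal base is not realisable by any gradient perturbation but is realised by the leading non-symplectic defect of a non-symplectic integrator's map, for every step-size $h>0$. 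This transversality destroys the $D_4$ configuration, and the same codimension gap handles the higher $D_k$ cases and yields the general ``iff'' of the footnote.

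The main obstacle I expect is twofold. First, I must justify the splitting/reduction uniformly for the \emph{non-gradient} perturbed family and show that contact equivalence of the reduced map germ controls the full bifurcation diagram, so that a broken reduced problem genuinely breaks the diagram rather than merely the chosen normal form. Second, I must upgrade the pointwise dimension count to the quantified statement ``every non-symplectic integrator breaks it for every $h>0$'', which requires showing that the integrator's order-$h^{p}$ defect generically contributes a nonzero component along the transverse (non-symmetric-Jacobian) direction rather than lying, accidentally, inside the gradient locus. The individual $\mathcal K_e$-codimension computations for the $D_k$ germs are routine once this framework is in place.
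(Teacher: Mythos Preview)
Your plan is correct and follows essentially the same architecture as the paper: reduce the exact problem to $\nabla g_\mu=0$, observe that a non-symplectic discretisation perturbs $\nabla g_\mu$ by a non-exact map, and then decide persistence by asking whether the gradient (catastrophe) unfolding of the relevant germ is already versal as a \emph{map} germ. The paper phrases the last step slightly differently: rather than computing the $\mathcal K_e$-codimension of $\nabla\hat g$ via the Tjurina quotient, it explicitly constructs the universal roots-of-a-function unfolding of $\nabla g$ for $g(x,y)=x^2y\pm y^{k+1}$ (Lemma~\ref{lem:unfoldBseries}, following Mather's procedure in Wall's lectures) and exhibits the missing direction $\mu_4\binom{y}{0}$, which is not a gradient. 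Your dimension count ($4$ versus $3$ for $D_4^{\pm}$) recovers exactly this. For the $A$-series your argument is actually cleaner than the paper's: the paper simply cites stability of $A$-series in the roots-of-a-function classification, whereas your observation that corank $1$ forces every map germ $\R\to\R$ to be a derivative makes the point self-contained.

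What the paper's more explicit route buys is the subsequent analysis: knowing the transverse direction is precisely $\binom{y}{0}$ lets one read off \emph{how} the $D_4^{\pm}$ break (into two swallowtails, respectively three separated cusp lines, Propositions~\ref{cor:breakD4p} and~\ref{cor:breakD4m}), which your codimension argument does not immediately give. Conversely, your Tjurina computation scales to higher $D_k$ without rewriting the normal form each time. The two obstacles you flag --- uniform reduction for non-gradient families, and ruling out accidental tangency of the integrator defect to the gradient locus --- are genuine, and the paper is equally informal on both: it absorbs the second into the standing definition of ``non-symplectic integrator'' (excluding maps that happen to be symplectic) together with implicit genericity, rather than proving a transversality statement.
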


\begin{remark}
For the fold bifurcation in the Bratu problem (figure \ref{fig:BratuBifur}) the proposition says that any integrator with fixed step-size will capture the bifurcation correctly, i.e.\ the obtained bifurcation diagram will qualitatively look the same as figure \ref{fig:BratuBifur}.
\end{remark}


\begin{proof}[Proof of proposition \ref{prop:Aseriespersist}]
As explained in \cite{bifurHampaper}, solutions to Lagrangian Hamiltonian boundary value problems locally correspond to the roots of an $\R^{2n}$-valued function $F$ defined on an open subset of $\R^{2n}$ with $F$ arising as the gradient of a scalar valued map.
A smooth perturbation of the symplectic flow map corresponds to a smooth perturbation $\tilde F$ of $F$. 
The map $\tilde F$ has gradient structure if and only if the perturbation is symplectic. 
$A$-series bifurcations are stable in the roots-of-a-function problem and are, therefore, persistent under any smooth perturbation of $F$. 
This is not true for $D$-series bifurcations: there is no versal unfolding of the roots-of-a-function type singularity $\nabla g$ corresponding to the singularity $D^{\pm}_{k+2}$ ($k \ge 2$) represented by $g(x,y)=x^2y \pm y^{k+1}$  by exact maps as we will see from lemma \ref{lem:unfoldBseries}. Indeed, in proposition \ref{prop:DdeconstructA} we will prove that $D$-series bifurcations either decompose into $A$-series bifurcations or vanish.
\end{proof}



Let us take a closer look at the first two $D$-series bifurcations.
Denote the unfolding of the hyperbolic umbilic singularity $D_4^+$ with parameter $\mu$ by
\[g_\mu(x,y)=x^3+xy^2+\mu_3 (x^2+y^2)+\mu_2 y + \mu_1 x.\]
As the bifurcation diagram to the problem $\nabla g_\mu(x,y)=0$ is too high dimensional to visualise, the left plot in figure \ref{fig:breakD4p} shows the corresponding level bifurcation set, i.e.\ the set of points in the parameter space at which a bifurcation occurs, which is given as
\begin{equation}\label{eq:D4plevelbifur}
\{ \mu\in \R^3 \,|\, \exists (x,y)\in \R^2 : \nabla g_\mu(x,y)=0, \; \det \mathrm{Hess}\, g_\mu(x,y) =0\}.
\end{equation}
The plot to the right of figure \ref{fig:breakD4p} shows a perturbed version of the hyperbolic umbilic bifurcation, which is the set
\begin{equation}\label{eq:D4plevelbifurpert}
\{ \mu \in \R^3 \,|\, \exists (x,y)\in \R^2 : \nabla g_\mu (x,y) +f_{\epsilon}(x,y) =0, \; \det \D (\nabla g_\mu + f_{\epsilon})(x,y) =0\}
\end{equation}
for $\e \not=0$ near $0$ and a smooth family of maps $f_{\epsilon}\colon \R^2\to\R^2$ with $f_0=0$ such that $f_{\epsilon} \not= \nabla h_\e$ for any $h_\e \colon \R^2\to \R$ unless $\e =0$. Here $\D (\nabla g_\mu + f_{\epsilon})(x,y)$ denotes the Jacobian matrix of the map $(x,y) \mapsto  (\nabla g_\mu + f_{\epsilon})(x,y)$.

Each point in the sheets corresponds to a fold singularity ($A_2$) and points on edges to cusp singularities ($A_3$). At parameter values where the sheets self-intersect there are two simultaneous fold singularities in the phase space. In the unperturbed system two lines of simultaneous folds merge with a line of cusps to a hyperbolic umbilic point \cite[I.5]{Gilmore1993catastrophe}. In the perturbed picture the line of cusps decomposes into three segments and two swallowtail points ($A_4$) occur where two lines of cusps merge with a line of simultaneous folds. Notice that there are no swallowtail points in the unperturbed level bifurcation set.

\begin{figure}
\begin{center}
\includegraphics[width=0.49\textwidth]{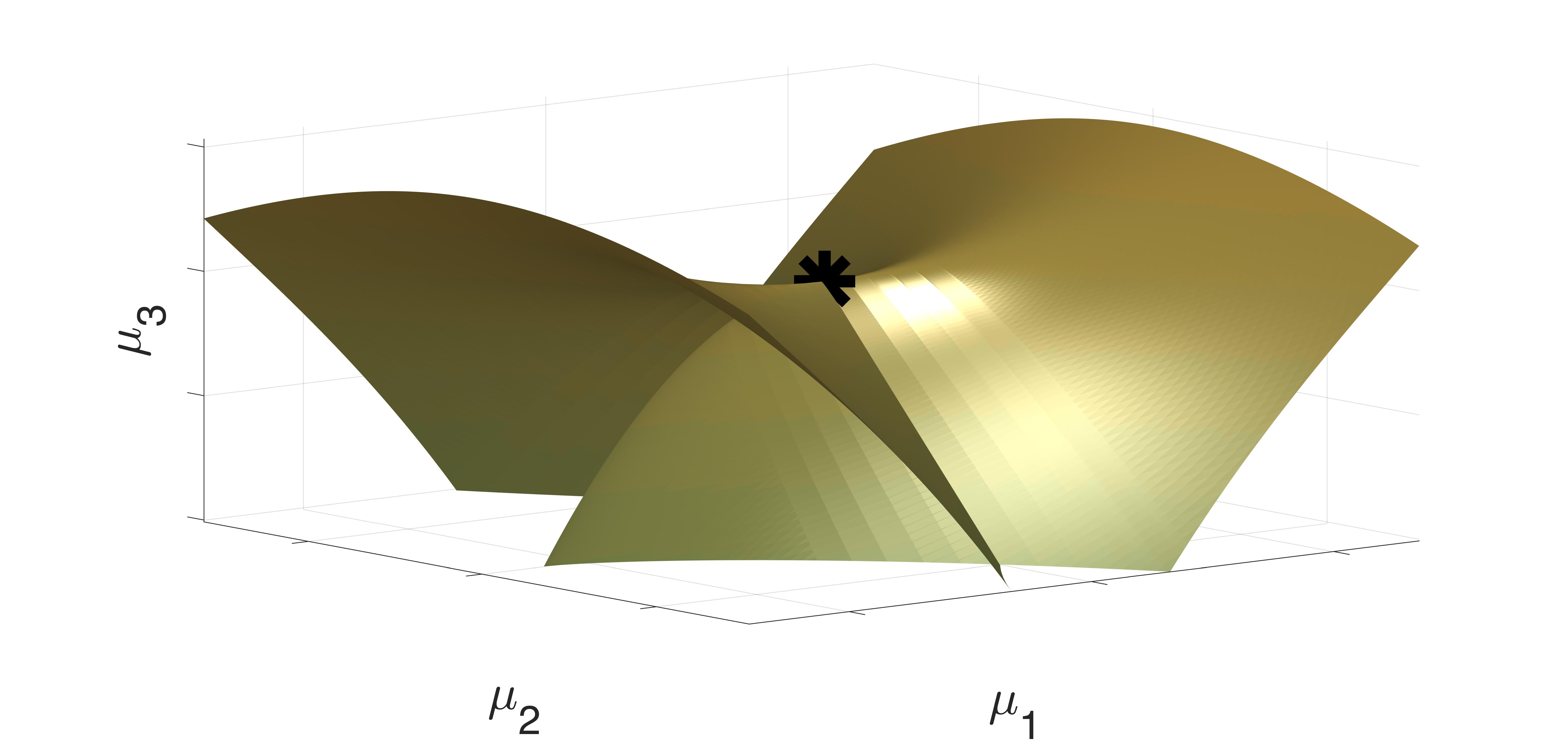}
\includegraphics[width=0.49\textwidth]{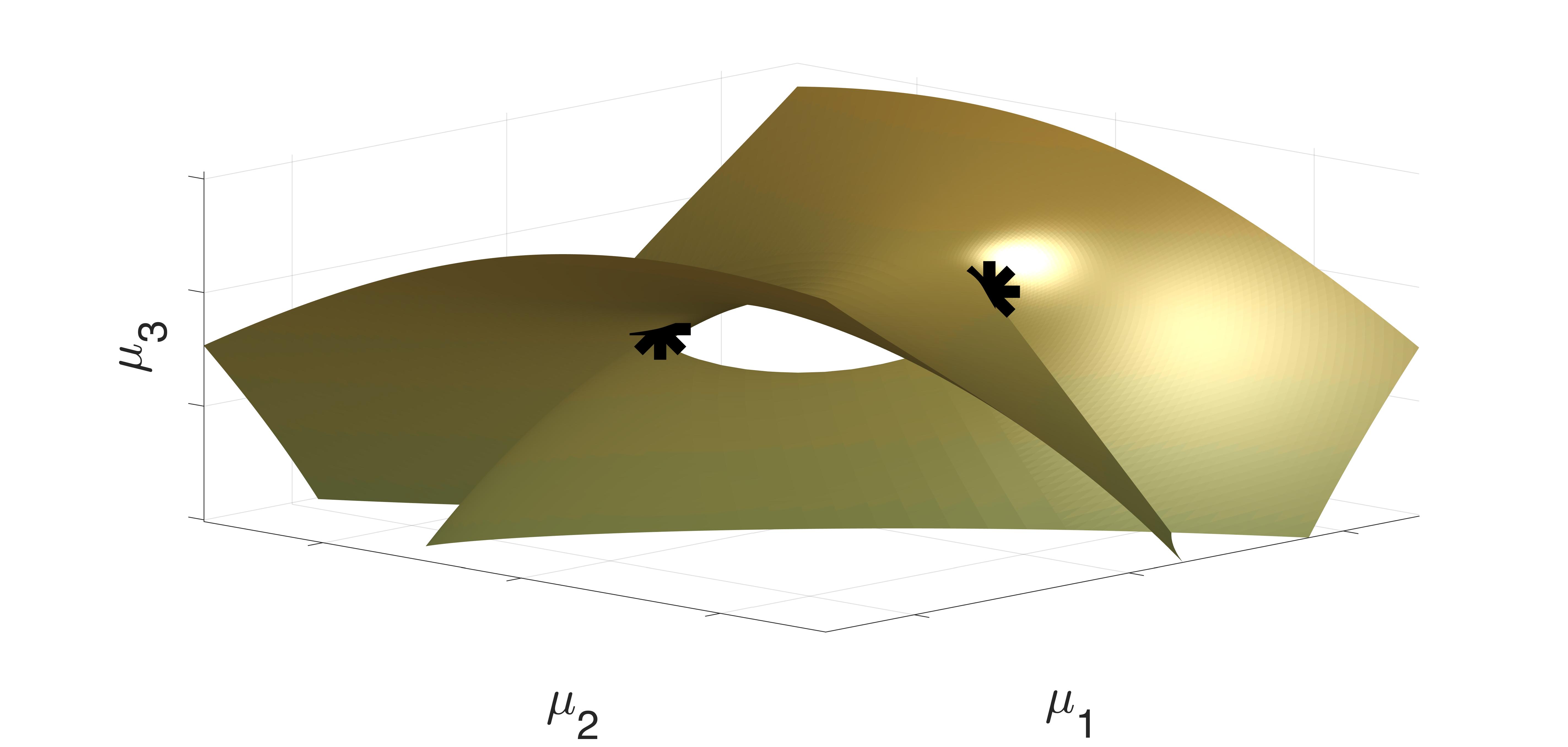}
\end{center}
\caption{The plots show those configurations of the parameters $\mu=(\mu_1,\mu_2,\mu_3)$ for which the problem $\nabla g_\mu(x,y)=0$ or $(\nabla g_\mu + f_\e)(x,y)=0$ becomes singular, i.e.\ a plot of the sets \eqref{eq:D4plevelbifur} or \eqref{eq:D4plevelbifurpert}, respectively.
Imagine moving around the parameter $\mu$ and watching the solutions bifurcating in the phase space. 
As $\mu$ crosses a sheet two solutions merge and vanish or are born (fold - $A_2$).
For $\mu$ in the intersection of two sheets there are two simultaneous fold singularities at different positions in the phase space. Crossing an edge three solutions merge into one (or vice versa).
Points contained in an edge correspond to cusp singularities
At the marked point in the left plot of the unperturbed problem there is a hyperbolic umbilic singularity. Moving the parameter $\mu$ upwards along the $\mu_3$ axis through the singular point four solutions merge and vanish. In the perturbed version to the right the hyperbolic umbilic point breaks up into two swallowtail points. While the left plot models using a symplectic integrator correctly showing a hyperbolic umbilic bifurcation $D_4^+$, the right plot models using a non-symplectic integrator incorrectly showing two nearby swallowtail bifurcations ($A_4$). See \cite{YoutubeHyperbolicUmbilic} for an animated version.}\label{fig:breakD4p}
\end{figure}



Figure \ref{fig:breakD4m} shows a level bifurcation set of an elliptic umbilic singularity $(D_4^-)$ and a generically perturbed version of the gradient-zero problem with a map that does not admit a primitive. Here we use the universal unfolding
\[g_\mu(x,y) =x^3-xy^2 + \mu_3 (x^2+y^2) +\mu_2 y + \mu_1 x.\]
We see that in the perturbed picture the lines of cusps fail to merge such that there is no elliptic umbilic point but only folds and cusp bifurcations. 
\begin{figure}
\begin{center}
\includegraphics[width=0.49\textwidth]{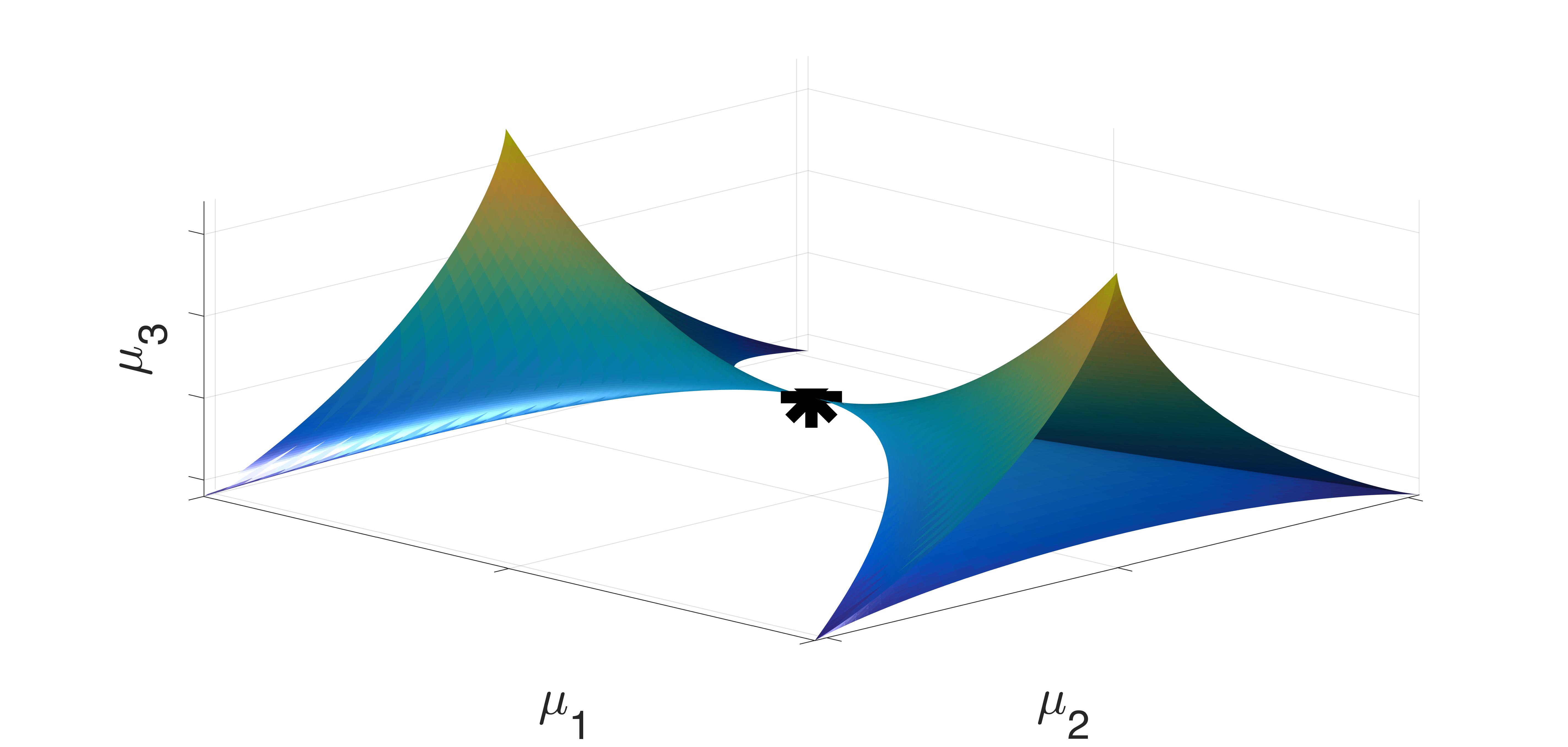}
\includegraphics[width=0.49\textwidth]{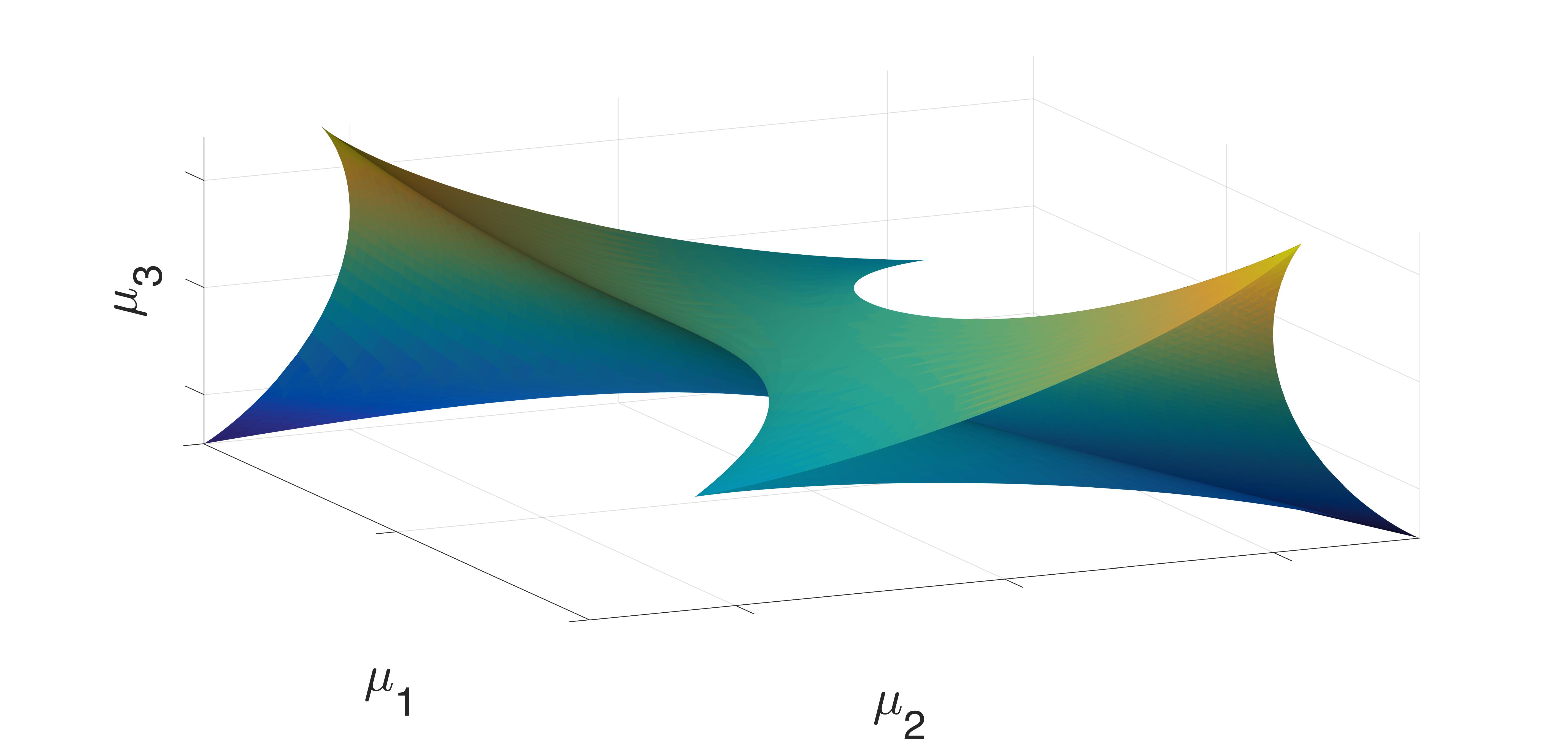}
\end{center}
\caption{
An exact (to the left) and a perturbed (to the right) version of the level bifurcation set of an elliptic umbilic singularity $D_4^-$. The elliptic umbilic point marked by an asterisk in the left plot is not present in the right plot where three lines of cusps fail to merge. The left figure models using a symplectic integrator correctly showing a $D_4^-$ singularity, the plot to the right models using a non-symplectic integrator incorrectly showing no elliptic umbilic point. See \cite{YoutubeEllipticUmbilic} for an animated version.}\label{fig:breakD4m}
\end{figure}
In the remainder of this section we will prove 
\begin{itemize}
\item
a general formula useful to analyse how $D$-series bifurcations deconstruct under perturbations which do not respect the gradient structure modelling a non-symplectic discretisation (lemma \ref{lem:unfoldBseries}),
\item
that the situations displayed in figure \ref{fig:breakD4p} and in figure \ref{fig:breakD4m} are universal, i.e.\ all generic roots-of-a-function type perturbations have the same described effects (proposition \ref{cor:breakD4p} and proposition \ref{cor:breakD4m})
\item
and that non-gradient-like perturbations decompose $D$-series singularities into $A$-series singularities (proposition \ref{prop:DdeconstructA}).
\end{itemize}

For this we will use the algebraic framework developed by Mather and presented, e.g.\ in the second lecture on $C^\infty$ stability and classification by Wall \cite{walllectures}. The following lemma analyses how $D$-series singularities unfold in the roots-of-a-function problem.


\begin{lemma}\label{lem:unfoldBseries} Consider the $D$-series singularity $D^{\pm}_{k+2}$ ($k\ge2$) defined by the germ $g(x,y)=x^2y \pm y^{k+1}$. A universal unfolding of $\nabla g$ in the free module of rank 2 over the ring $\R[[x,y]]$ of formal power series in the variables $x$ and $y$ is given as
\[
f_\mu(x,y) 
=\begin{pmatrix}2xy\\ x^2 \pm (k+1)y^k\end{pmatrix}
+\mu_1\begin{pmatrix}1\\0\end{pmatrix}
+\mu_2\begin{pmatrix}0\\1\end{pmatrix}
+\mu_3\begin{pmatrix}x\\0\end{pmatrix}
+\mu_4\begin{pmatrix}y\\0\end{pmatrix}
+ \sum_{j=1}^{k-2} \mu_j \begin{pmatrix}0\\y\end{pmatrix}.
\]
\end{lemma}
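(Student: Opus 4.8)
The plan is to regard $F:=\nabla g=\binom{2xy}{\,x^2\pm(k+1)y^k}$ as a map germ $(\R^2,0)\to(\R^2,0)$ and to prove versality of the proposed unfolding inside the Mather--Wall framework for the roots-of-a-function (zeros) problem, whose natural equivalence is contact ($\mathcal K$-) equivalence. Write $\theta(F)=\R[[x,y]]^2$ for the free rank-$2$ module of the statement. The extended contact tangent space is $T\mathcal K_e F=tF(\theta_2)+I\cdot\theta(F)$, where $tF(\theta_2)$ is the $\R[[x,y]]$-module generated by the two columns $\p_x F=\binom{2y}{2x}$ and $\p_y F=\binom{2x}{\pm k(k+1)y^{k-1}}$ of the Jacobian of $F$, and $I=\langle F_1,F_2\rangle$ is the ideal generated by the components of $F$. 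By the infinitesimal criterion for $\mathcal K$-versality (Mather's theorem, see \cite{walllectures}), the family $f_\mu$ is $\mathcal K$-versal exactly when the initial velocities $\p f_\mu/\p\mu_i|_{\mu=0}$ span the normal space $N:=\theta(F)/T\mathcal K_e F$ over $\R$. Since these velocities are precisely the constant vectors listed in the statement, the whole claim reduces to showing that $\binom10,\binom01,\binom x0,\binom y0$ together with $\binom0{y^j}$ ($j=1,\dots,k-2$) project to a basis of $N$, and that $\dim_\R N=k+2$; finiteness of $\dim_\R N$ will also show that $F$ is finitely $\mathcal K$-determined, so that the formal computation over $\R[[x,y]]$ is justified.

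The computational core is the determination of $N$. First I would note that $I=\langle 2xy,\,x^2\pm(k+1)y^k\rangle$ is exactly the Jacobian ideal of $g$, so $Q:=\R[[x,y]]/I$ is the Milnor algebra. Reducing monomials by $xy\equiv0$ and $x^2\equiv\mp(k+1)y^k$, and observing the crucial relation $y\,(x^2\pm(k+1)y^k)\equiv\pm(k+1)y^{k+1}\in I$, hence $y^{k+1}\equiv0$, gives the explicit $\R$-basis $\{1,x,y,y^2,\dots,y^k\}$ of $Q$, so $\dim_\R Q=\mu=k+2$. Consequently $N=Q^2/M$, where $M$ is the $Q$-submodule generated by the reduced columns $w_1=y\binom10+x\binom01$ and $w_2=x\binom10+c\,y^{k-1}\binom01$ with $c=\pm k(k+1)/2\neq0$. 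Multiplying $w_1,w_2$ by the basis of $Q$ and reducing, I would show that $M$ is spanned by the $\R$-linearly independent vectors $w_1$, $w_2$, $y^j\binom10$ for $j=2,\dots,k$, and $y^k\binom01$; this yields $\dim_\R M=k+2$ and therefore $\dim_\R N=2(k+2)-(k+2)=k+2$.

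It then remains to read off the relations that $M$ imposes in $N$, namely $y\binom10\equiv-x\binom01$, $x\binom10\equiv-c\,y^{k-1}\binom01$, $y^j\binom10\equiv0$ for $2\le j\le k$, and $y^k\binom01\equiv0$, and to verify that the proposed generators map to a basis: $\binom10$ and $\binom01$ survive, $\binom y0$ represents the class of $x\binom01$, $\binom x0$ represents a nonzero multiple of $y^{k-1}\binom01$, and $\binom0{y^j}$ ($j=1,\dots,k-2$) represent $y\binom01,\dots,y^{k-2}\binom01$. These $k+2$ classes are precisely $\binom10,\binom01,x\binom01,y\binom01,\dots,y^{k-1}\binom01$, which form a basis of $N$; hence $f_\mu$ is $\mathcal K$-versal, and since its number of parameters equals $\dim_\R N$ it is in fact miniversal, i.e.\ universal. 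I expect the main obstacle to be the module reduction producing $M$: for $k>2$ the generator $w_2$ is not homogeneous (the term $y^{k-1}\binom01$ does not match the degree of $x\binom10$), so the quotient is not graded and one cannot argue degree by degree; it is the finite-dimensionality of the Milnor algebra $Q$ that makes the reduction terminate and fixes $\dim_\R M=k+2$.
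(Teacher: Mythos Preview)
Your proof is correct and follows precisely the route the paper alludes to: the paper's own argument consists of a single sentence referring to Wall's presentation of Mather's procedure for computing universal unfoldings and says the claimed form is obtained by imitating that procedure in the larger module $\R[[x,y]]^2$. You have actually carried out that computation in full---identifying the Milnor algebra $Q$, reducing the Jacobian columns to $w_1,w_2$, determining $\dim_\R M=k+2$, and verifying that the listed directions project to a basis of the normal space---so your approach coincides with the paper's but supplies the details it omits.
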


\begin{proof} In \cite[p.187]{walllectures} Wall describes a process developed by Mather to obtain universal unfoldings of topological singularities in the module $\mathcal M \R[[x,y]]^2$, where $\mathcal M$ is the maximal ideal of the local ring $\R[[x,y]]$. The claimed form is obtained by imitating this procedure in the bigger module $\R[[x,y]]^2$.
\end{proof}

\begin{remark}
The roots-of-a-function type singularity $\nabla g$ which corresponds to the singularity $D^{\pm}_{k+2}$ ($k \ge 2$) represented by the map germ $g(x,y)=x^2y \pm y^{k+1}$ is classified as $B_{2,k}$ for $D_{k+2}^+$ and as $B'_{2,k}$ for $D_{k+2}^-$ in \cite[p.268]{TopoStability}. If $k$ is odd then the classes $D_{k+2}^+$ and $D_{k+2}^-$ as well as $B_{2,k}$ and $B'_{2,k}$ coincide. The hyperbolic umbilic $D_4^+$ corresponds to $B_{2,2}$, the elliptic umbilic $D_4^-$ to $B'_{2,2}$ and the parabolic umbilic $D_5$ to $B_{3,2}$.
\end{remark}

\begin{remark}
In Mather's work and in Wall's lecture notes \cite[p.198]{walllectures} the $B$ series is denoted by $I$, $II$, $IV$. In particular, the singularity $B_{2,2}$ is denoted as $I_{2,2}$, $B'_{2,2}$ as $II_{2,2}$ and $B_{3,2}$ as $I_{2,3}$. 
\end{remark}


\begin{prop}\label{prop:DdeconstructA}
A non-symplectic integrator with any fixed (but not necessarily uniform) step-size, applied to any autonomous or nonautonomous generic Hamiltonian Lagrangian boundary value problem decomposes $D$-series singularities into $A$-series singularities for any positive maximal step-size.
\end{prop}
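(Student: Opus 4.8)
The plan is to localise near a solution carrying a $D^{\pm}_{k+2}$ singularity and to reduce the whole statement to a rank statement about the Jacobian of the discretised root map, using Lemma~\ref{lem:unfoldBseries} to supply the relevant versal unfolding. As in the proof of Proposition~\ref{prop:Aseriespersist}, solving the boundary value problem with a non-symplectic integrator replaces the exact gradient-zero problem $\nabla g_\mu=0$ by a perturbed problem $\nabla g_\mu+f_\e=0$ in which $f_\e$ is \emph{not} a gradient for $\e\neq0$. The first observation I would make is that
\[
\D(\nabla g_\mu+f_\e)=\Hess g_\mu+\D f_\e ,
\]
and since $\Hess g_\mu$ is symmetric, the antisymmetric part of this Jacobian equals that of $\D f_\e$, i.e.\ up to sign the curl of $f_\e$. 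A $D$-series point is a corank-$2$ — here rank-$0$ — point of the Jacobian, and rank $0$ forces the antisymmetric part to vanish; hence corank-$2$ roots can occur \emph{only} where $\operatorname{curl} f_\e=0$. In the explicit model of Lemma~\ref{lem:unfoldBseries} this curl is exactly the constant $\mu_4$, the coefficient of the unique non-gradient versal direction $(y,0)^{\top}$, so the corank-$2$ locus of the versal unfolding lies entirely in the gradient slice $\{\mu_4=0\}$; genericity of the boundary value problem guarantees, via Lemma~\ref{lem:unfoldBseries}, that a non-symplectic integrator activates this direction.

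The base case is the umbilic case $k=2$, where the conclusion is immediate. A corank-$2$ root of the deformed $(k+1)=3$-parameter family would have to satisfy the $2$ equations $\nabla g_\mu+f_\e=0$ together with the $4$ equations $\D(\nabla g_\mu+f_\e)=0$, a total of $6$ conditions on the $5$ unknowns $(x,y,\mu_1,\mu_2,\mu_3)$; for generic $f_\e$ these have no common solution near the singular point. Thus no corank-$2$ root survives, every singular root is a corank-$1$ germ $\R^2\to\R^2$, and such germs are exactly the $A$-series germs $A_\ell$ ($\ell\ge2$) in the roots-of-a-function classification. Since the discriminant persists (the singular set of the $D_4^{\pm}$ point deforms but does not vanish near the origin), the umbilic point is replaced by a cluster of folds, cusps and swallowtails, recovering the universal pictures of Propositions~\ref{cor:breakD4p} and~\ref{cor:breakD4m}.

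For the higher $D$-series I would argue by induction on $k$. The same count now leaves a corank-$2$ locus of dimension $k-2$; recentring the deformed map at such a point and expanding shows that the germ there is, after equivalence, a \emph{broken} lower umbilic $B_{2,j}$ with $j<k$, i.e.\ a $D_{j+2}^{\pm}$ singularity perturbed by a genuinely non-gradient term, which by the inductive hypothesis decomposes into $A$-series singularities. Since the central value $\mu=0,\ \e=0$ is not attained on the slice $\e=\text{const}\neq0$, the $D^{\pm}_{k+2}$ catastrophe point itself is destroyed, and every surviving singularity is ultimately of $A$-series type. This gives the asserted decomposition uniformly in $k\ge2$ and in the sign $\pm$.

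I expect the real obstacle to be the bookkeeping in the first and third paragraphs rather than the rank computation. One must verify that the perturbation $f_\e$ produced by an \emph{arbitrary} non-symplectic integrator is generic enough to activate the non-gradient versal direction of Lemma~\ref{lem:unfoldBseries}, which means combining the standing non-symplecticity assumption with Mather's versal-unfolding machinery to show that the induced map from $(\mu,\e)$-space into the unfolding base is submersive onto the $(y,0)^{\top}$-direction. The second delicate point is the recentring step of the induction: one has to confirm that the germ at a surviving corank-$2$ root is genuinely a lower broken umbilic, with curl not identically zero in a punctured neighbourhood, so that the inductive hypothesis applies. Granting these, the corank count and the corank-$1\Rightarrow A$-series classification are routine.
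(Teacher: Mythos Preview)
Your first paragraph already contains the complete argument, uniformly in $k$, and it is precisely the paper's proof. Once Lemma~\ref{lem:unfoldBseries} puts the generic non-gradient perturbation into the form
\[
f_{\mu,\tilde\mu}(x,y)=\nabla(g_\mu+h_{\mu,\tilde\mu})(x,y)+\begin{pmatrix}y\\0\end{pmatrix}
\]
(with the $\mu_4$-coefficient normalised away), the Jacobian $\D f_{\mu,\tilde\mu}$ is a symmetric Hessian plus the constant matrix $\bigl(\begin{smallmatrix}0&1\\0&0\end{smallmatrix}\bigr)$, whose antisymmetric part $\bigl(\begin{smallmatrix}0&1/2\\-1/2&0\end{smallmatrix}\bigr)$ is nowhere zero. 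Hence the Jacobian never vanishes, the corank is everywhere at most~$1$, and only $A$-series singularities can occur. That is the paper's entire proof, for all $k\ge2$ at once; your curl observation is exactly the mechanism behind the paper's one-line ``the Jacobian matrix cannot vanish''.

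Everything from your second paragraph onward is therefore unnecessary and, in fact, inconsistent with what you have already established. If the antisymmetric part of the Jacobian is a nonzero constant on the slice $\mu_4\neq0$, then the corank-$2$ locus you propose to analyse by a dimension count (for $k=2$) or by induction (for $k>2$) is \emph{empty}, not $(k-2)$-dimensional. The ``delicate recentring step'' you flag at the end is a difficulty of your own creation: there are no surviving corank-$2$ germs to recentre at. Drop the case split and the induction; keep only the curl argument of your first paragraph, which already finishes the proof.
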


\begin{proof} Let $g_\mu$ be a smooth family of maps with a $D$-series singularity. To prove the assertion, we need to show that in any generic roots-of-a-function type perturbation of the problem $\nabla g_\mu=0$ around the singular point only $A$-series singularities occur.

By singularity theory the family $g_\mu$ is stably right-left equivalent to an unfolding of the $D$-series bifurcation defined by the germ
\[
g(x,y)=x^2y \pm y^{k+1} \quad (k\ge 2).
\]
By lemma \ref{lem:unfoldBseries} a generic perturbation of the problem $\nabla g_\mu=0$ will, after another change of variables, be of the form
\[
f_{\mu,\tilde \mu}(x,y)=\nabla (g_\mu + h_{\mu,\tilde \mu})(x,y) + \begin{pmatrix}y\\0\end{pmatrix}.
\]
Since the Jacobian matrix $\D f_{\mu,\tilde \mu}(x,y)$ cannot vanish, only those singularities can occur which require a rank-drop of at most 1. These are exactly the $A$-series singularities.
\end{proof}

To analyse the breaking of hyperbolic and elliptic umbilic singularities $D^+_4$ and $D^-_4$ it is convenient to formulate the following special case of lemma \ref{lem:unfoldBseries}.

\begin{lemma}\label{lem:unfoldB2}
Consider the germ defined by $g(x,y)=x^2y \pm y^{3}$. A universal unfolding of $\nabla g$ in $\R[[x,y]]^2$ is given by
\[f_\mu(x,y) 
=\begin{pmatrix}2xy\\ x^2 \pm 3y^2\end{pmatrix}
+\mu_1\begin{pmatrix}1\\0\end{pmatrix}
+\mu_2\begin{pmatrix}0\\1\end{pmatrix}
+\mu_3\begin{pmatrix}\mp x\\y\end{pmatrix}
+\mu_4\begin{pmatrix}y\\0\end{pmatrix}.\]
\end{lemma}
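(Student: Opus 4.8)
The plan is to derive Lemma \ref{lem:unfoldB2} from the already-established Lemma \ref{lem:unfoldBseries} in the case $k=2$, rather than redoing Mather's construction from scratch. Setting $k=2$ in Lemma \ref{lem:unfoldBseries}, the sum $\sum_{j=1}^{k-2}$ is empty, so that lemma asserts that the four directions $(1,0)^\top$, $(0,1)^\top$, $(x,0)^\top$, $(y,0)^\top$ span a complement to the tangent space $T$ of $\nabla g$ (Mather's tangent space for the relevant equivalence) in $\R[[x,y]]^2$, and that this normal space $N := \R[[x,y]]^2/T$ is $4$-dimensional. Both families in the two lemmas are unfoldings of the same germ $\nabla g = (2xy,\, x^2 \pm 3y^2)^\top$ with the same number of parameters, so it suffices to show that the four directions appearing in Lemma \ref{lem:unfoldB2}, namely $(1,0)^\top$, $(0,1)^\top$, $(\mp x, y)^\top$, $(y,0)^\top$, again form an $\R$-basis of $N$; versality then follows from the standard criterion that a finite unfolding is universal precisely when its directions project onto a basis of $N$.

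The two sets of directions agree except in the third slot, where $(x,0)^\top$ is replaced by $(\mp x, y)^\top$, so the whole argument reduces to understanding $(\mp x,y)^\top$ modulo $T$. The key observation is that the source-derivative $\p_y \nabla g = (2x,\, \pm 6y)^\top$ lies in $T$, since reparametrisations of the source contribute exactly the $\R[[x,y]]$-span of $\p_x \nabla g$ and $\p_y \nabla g$ to $T$ (for both right-left and contact equivalence). This yields the congruence $(x,0)^\top \equiv \mp 3\,(0,y)^\top \pmod T$. Substituting into $(\mp x, y)^\top = \mp (x,0)^\top + (0,y)^\top$ gives, for either choice of sign, $(\mp x, y)^\top \equiv 4\,(0,y)^\top \equiv \mp \tfrac{4}{3}\,(x,0)^\top \pmod T$. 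Hence, expressed in the basis of Lemma \ref{lem:unfoldBseries}, the new directions are obtained by the diagonal transition matrix $\mathrm{diag}(1,1,\mp\tfrac{4}{3},1)$, whose determinant $\mp\tfrac{4}{3}$ is nonzero, so they form a basis of $N$ as well.

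I expect the only delicate point to be the sign bookkeeping: one must check that the cancellation producing $(\mp x,y)^\top \equiv 4\,(0,y)^\top$ holds for the hyperbolic ($+$) and elliptic ($-$) cases simultaneously, which it does because the product of the two occurrences of the sign is always $+$. Everything else is routine linear algebra over $N$, and no new module computation is required beyond extracting the single relation $\p_y \nabla g \in T$. With both families shown to span $N$, each is a universal unfolding of $\nabla g$, which proves the lemma.
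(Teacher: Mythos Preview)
Your proposal is correct and follows exactly the route indicated in the paper: the paper's proof is the single sentence that Lemma~\ref{lem:unfoldB2} is the case $k=2$ of Lemma~\ref{lem:unfoldBseries} ``where a different basis is used to obtain the miniversal unfolding'', and you supply precisely the linear-algebra verification of that basis change, using the relation $\partial_y\nabla g=(2x,\pm 6y)^\top\in T$ to show the transition matrix in $N$ is $\mathrm{diag}(1,1,\mp\tfrac{4}{3},1)$. Your write-up is more detailed than the paper's, but the approach is the same.
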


\begin{proof} The statement refers to the special case $k=2$ of lemma \ref{lem:unfoldBseries}, where a different basis is used to obtain the miniversal unfolding.
\end{proof}

%
%

The lemmas provide the tools needed to analyse how $D$-series bifurcations decompose if generically perturbed within the roots-of-a-function problem. This models the effect of using a non-symplectic integrator to resolve a $D$-series bifurcation in a Lagrangian Hamiltonian boundary value problem.
The following proposition shows that the situation shown in figure \ref{fig:breakD4p} for the hyperbolic umbilic $D_4^+$ is universal.

\begin{prop}\label{cor:breakD4p}
A hyperbolic umbilic singularity $D_4^+$ in the critical-points problem $\nabla g_\mu=0$ for a smooth family of real valued maps $g_\mu$ decomposes into two swallowtail points $A_4$ if the problem $\nabla g_\mu=0$ is generically perturbed to a problem $f_\mu=0$, where the perturbation does not respect the gradient structure.
\end{prop}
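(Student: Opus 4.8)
The plan is to reduce the general statement to the explicit model unfolding of Lemma \ref{lem:unfoldB2} and then to locate the swallowtail points in that model by a finite algebraic computation. Writing the hyperbolic umbilic germ as $g(x,y)=x^2y+y^3$ (equivalent to $x^3+xy^2$ after interchanging $x$ and $y$), Lemma \ref{lem:unfoldB2} supplies a universal unfolding of $\nabla g$ in the roots-of-a-function category with unfolding directions $(1,0)$, $(0,1)$, $(-x,y)$ and $(y,0)$. The first three are gradients, namely $\nabla x$, $\nabla y$ and $\nabla(\tfrac12(y^2-x^2))$, and they reproduce the $3$-parameter miniversal gradient unfolding of $D_4^+$; the direction $(y,0)$ is the unique non-gradient one. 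A symplectic perturbation therefore stays in the slice $\mu_4=0$ and recovers the exact $D_4^+$, whereas a generic perturbation that does not respect the gradient structure switches on a nonzero component along $(y,0)$. By versality the perturbed $3$-parameter family is right-left equivalent to the slice $\mu_4=\e$ of the model for some small $\e\neq0$, and since right-left equivalence carries bifurcation diagrams to diffeomorphic ones it suffices to analyse the explicit family $f_{(\mu_1,\mu_2,\mu_3,\e)}$.

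For the model I would parametrise the solution manifold $\{f_\mu=0\}$ by $(x,y,\mu_3)$, solving the two equations for $(\mu_1,\mu_2)$, and study the projection $P\colon(x,y,\mu_3)\mapsto(\mu_1,\mu_2,\mu_3)$, whose critical values form the level bifurcation set. The first observation is that the Jacobian $\D f_\mu$ can never vanish when $\e\neq0$, since its off-diagonal entries $-2x-\e$ and $-2x$ cannot vanish simultaneously; hence there is no corank-$2$ zero and no umbilic point. This is the mechanism by which the bifurcation breaks, consistent with Proposition \ref{prop:DdeconstructA}. The swallowtail points $A_4$ are the Morin $A_3$ singularities of $P$, cut out by $\det\D P = v\det\D P = v^2\det\D P = 0$ together with $v^3\det\D P\neq0$, where $v$ is a kernel vector field of $\D P$ along the critical set. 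Taking $v=(\mu_3+6y,-2x,0)$ and rescaling $(x,y,\mu_3)=\e(X,Y,M)$ renders the three equations independent of $\e$; after eliminating $M$ and $X$ the problem collapses to a single cubic in $t=Y^2$ whose sign pattern forces, by Descartes' rule, exactly one positive root, hence exactly two real solutions $Y=\pm\sqrt t$. Each of these I would check satisfies the nondegeneracy $v^3\det\D P\neq0$, identifying them as the two swallowtails of figure \ref{fig:breakD4p}, with no further degenerate points appearing.

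The main obstacle is book-keeping rather than conceptual difficulty. One must choose the kernel vector field carefully, because the natural representative $v$ vanishes along a curve in the critical set, and along that curve the singularity type must be read off from the complementary representative $\tilde v=(-2x-\e,2y-\mu_3,0)$ to confirm that no spurious swallowtails are hidden there; the computation should show $\tilde v\det\D P\neq0$ on that curve, so that it consists of ordinary fold points. A secondary point requiring care is the versality step: I must verify that a generic non-gradient perturbation induces a classifying map whose image is transverse to the gradient slice $\mu_4=0$, so that the perturbed bifurcation set is genuinely diffeomorphic to the slice $\mu_4=\e$ of the model and not to some degenerate slice. Openness of the \emph{two swallowtails, no umbilic} configuration then delivers the universality asserted in the proposition.
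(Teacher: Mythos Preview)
Your reduction to the model via Lemma~\ref{lem:unfoldB2} and the corank argument showing that only $A$-series singularities survive are exactly as in the paper. The difference lies in how the swallowtails are located. The paper does not set up the Morin hierarchy at all: it simply completes the square in
\[
\det \D f_\mu(x,y)
=-4\Bigl(x+\tfrac{\mu_4}{4}\Bigr)^2
+12\Bigl(y-\tfrac{\mu_3}{3}\Bigr)^2
+\tfrac14\mu_4^2-\tfrac{16}{3}\mu_3^2,
\]
reads this as a one-parameter family of conics in $(x,y)$ indexed by $\mu_3$, and asserts that the codimension jumps precisely where the conic degenerates, i.e.\ at $\mu_3=\pm\tfrac{\sqrt3}{8}\mu_4$ with the swallowtail sitting at the crossing point $(x,y)=(-\tfrac14\mu_4,\pm\tfrac{\sqrt3}{24}\mu_4)$. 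This bypasses your cubic entirely and gives the coordinates by inspection.

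Your route via iterated kernel derivatives $v^j\det\D P=0$ is more laborious but more self-contained: the paper's passage from ``the conic degenerates'' to ``this is a swallowtail'' leans on the codimension-determines-$A_k$ principle without explicitly verifying the higher Thom--Boardman conditions, whereas you check them. Your cubic in $t=Y^2$ does have $t=1/192$ as its unique positive root, matching the paper's explicit points, so the two computations agree. Your handling of the vanishing of $v$ along $X=0$ via the complementary kernel field is a genuine care point that the paper's shortcut sidesteps altogether. (Minor slip: your off-diagonal entries should read $2x+\e$ and $2x$ rather than $-2x-\e$ and $-2x$, but the conclusion that they cannot vanish simultaneously is unaffected.)
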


\begin{proof} By singularity theory the family $g_\mu$ is stably right-left equivalent to the universal unfolding
\[g_\mu(x,y)=y^3+x^2y+\mu_3(y^2-x^2) + \mu_2 y + \mu_1 x\]
(see table \ref{tab:ThomCatastrophes}) of the singularity $D^+_4$.
Comparing $\nabla g_\mu$ with the unfolding obtained in lemma \ref{lem:unfoldB2} we see that $\nabla g_\mu$ constitutes a versal unfolding of $\nabla g$ in $\R[[x,y]]^2$ which can be made universal by adding the term $\mu_4 \begin{pmatrix}y\\0
\end{pmatrix}$. This gives
\[
f_\mu(x,y) 
=\begin{pmatrix}2xy\\ x^2 + 3y^2\end{pmatrix}
+\mu_1\begin{pmatrix}1\\0\end{pmatrix}
+\mu_2\begin{pmatrix}0\\1\end{pmatrix}
+2\mu_3\begin{pmatrix}- x\\y\end{pmatrix}
+\mu_4\begin{pmatrix}y\\0\end{pmatrix}.
\]
Let us fix $\mu_4 \not=0$. Only $A$-series bifurcations are possible because the Jacobian $D f_\mu$ cannot vanish. $A$-series bifurcations are determined by their codimension. We have
\[
\det D f_\mu(x,y)
=-4\left( x + \frac{\mu_4}4\right)^2
+12\left( y - \frac{\mu_3}3\right)^2
+\frac 14 \mu_4^2-\frac {16}3 \mu_3^2.
\]
At values $x,y,\mu_3$ with $\det D f_\mu(x,y)=0$ bifurcations occur with parameters $\mu_1$, $\mu_2$ uniquely determined by $f_\mu(x,y)=0$.
If $\mu_3 \not \in \{-\frac {\sqrt{3}} {8} \mu_4, \frac {\sqrt{3}} {8} \mu_4\}$ then we see codimension-2 bifurcations, i.e.\ cusp bifurcations, with cusp points lying on hyperbolas. For $\mu_3 \in \{-\frac {\sqrt{3}} {8} \mu_4, \frac {\sqrt{3}} {8} \mu_4\}$ the cusps merge to codimension-3 bifurcations, i.e.\ swallow tails with swallowtail points at $(x,y)=(-\frac 14 \mu_4,\pm \frac{\sqrt{3}}{24}\mu_4)$.
\end{proof}


The following proposition shows that the situation shown in figure \ref{fig:breakD4m} for the elliptic umbilic $D_4^-$ is universal.

\begin{prop}\label{cor:breakD4m}
If a generic smooth family of maps $\nabla g_\mu=0$ for real valued maps $g_\mu$ has an elliptic umbilic singularity then a small generic perturbation in the module $\R[[x,y]]^2$ will decompose the singularity into three separated lines of cusps. 
\end{prop}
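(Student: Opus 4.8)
The plan is to mirror the strategy used for the hyperbolic umbilic in the proof of Proposition \ref{cor:breakD4p}, exploiting the fact that Lemma \ref{lem:unfoldB2} already treats both umbilics simultaneously via the $\pm$ sign. First I would invoke singularity theory to reduce to the standard universal unfolding of $D_4^-$ from Table \ref{tab:ThomCatastrophes}, namely
\[g_\mu(x,y)=x^3-xy^2+\mu_3(x^2+y^2)+\mu_2 y+\mu_1 x,\]
so that $\nabla g_\mu=0$ is the critical-points problem to be perturbed. After a change of variables interchanging the roles of $x$ and $y$ (to match the germ $x^2y-y^3$ used in the lemma), I would compare $\nabla g_\mu$ with the lower sign case of Lemma \ref{lem:unfoldB2}, confirming that $\nabla g_\mu$ is a versal unfolding of $\nabla g$ in the module $\R[[x,y]]^2$ and that a generic non-gradient perturbation amounts to adding the extra basis term $\mu_4\begin{pmatrix}y\\0\end{pmatrix}$. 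This yields an explicit perturbed family
\[
f_\mu(x,y)=\begin{pmatrix}2xy\\ x^2-3y^2\end{pmatrix}
+\mu_1\begin{pmatrix}1\\0\end{pmatrix}
+\mu_2\begin{pmatrix}0\\1\end{pmatrix}
+2\mu_3\begin{pmatrix}x\\y\end{pmatrix}
+\mu_4\begin{pmatrix}y\\0\end{pmatrix}.
\]

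With $\mu_4\neq 0$ fixed, the key step is the same structural observation as before: since $\D f_\mu$ is a Jacobian that cannot vanish identically, only rank-drop-one (that is, $A$-series) singularities survive, and by Proposition \ref{prop:DdeconstructA} the $D_4^-$ point must decompose into folds and cusps. The substance of the proof is then to show that, unlike the hyperbolic case, the three lines of cusps \emph{fail to merge}, so that no swallowtail and no umbilic point appears. To see this I would compute $\det \D f_\mu(x,y)$ explicitly, complete the square in $x$ and $y$ as in the $D_4^+$ calculation, and analyse the singular locus $\det\D f_\mu=0$ together with the cusp condition. The cusp points form three curves in the parameter space; the task is to verify that for every $\mu_4\neq 0$ these three curves remain pairwise disjoint, i.e.\ there is no value of $\mu_3$ at which two cusp lines coalesce into a higher singularity.

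The main obstacle I anticipate is precisely this non-coalescence check. For the elliptic umbilic the three reflection-symmetric branches naturally want to meet at the umbilic point, and one must show that switching on $\mu_4$ separates all three simultaneously rather than pairing two of them into a swallowtail (as happens for $D_4^+$). Concretely, I would locate the cusp points as the simultaneous solutions of $\det\D f_\mu=0$ and the vanishing of the appropriate directional derivative of $\det\D f_\mu$ along the kernel of $\D f_\mu$, and then show the resulting system has no solution with a coincident pair for $\mu_4\neq 0$. The discriminant computation should reveal that the would-be merging condition forces $\mu_4=0$, giving the universality claim: every small non-gradient perturbation produces the picture in Figure \ref{fig:breakD4m}, with three separated lines of cusps and no elliptic umbilic point. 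A brief appeal to the determinacy and versality already established in Lemma \ref{lem:unfoldB2} then upgrades this from the normal form to an arbitrary generic perturbation, completing the proof.
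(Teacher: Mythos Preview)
Your proposal is correct and follows the paper's approach almost exactly: reduction to the normal form from Table~\ref{tab:ThomCatastrophes}, identification via Lemma~\ref{lem:unfoldB2} of the single non-gradient direction $\mu_4\binom{y}{0}$, and completion of the square in $\det\D f_\mu$. The one place where you work harder than necessary is the step you flag as ``the main obstacle'': the paper does not carry out an explicit cusp-locus computation at all. After completing the square one finds
\[
\det\D f_\mu(x,y)=-4\Bigl(x-\tfrac{\mu_4}{4}\Bigr)^2-12\Bigl(y-\tfrac{\mu_3}{3}\Bigr)^2+\tfrac{16}{3}\mu_3^2+\tfrac{1}{4}\mu_4^2,
\]
and the crucial difference from the $D_4^+$ case is that \emph{both} quadratic terms now carry the same (negative) sign. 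Thus for each fixed $(\mu_3,\mu_4)$ the singular locus $\det\D f_\mu=0$ is an ellipse rather than a hyperbola, and the constant term $\tfrac{16}{3}\mu_3^2+\tfrac{1}{4}\mu_4^2$ is strictly positive whenever $\mu_4\neq 0$. Hence the ellipse never degenerates to a point, which is precisely the condition for cusp lines to merge into a codimension-3 singularity (compare: in the hyperbolic case the swallowtails arose exactly where the hyperbola degenerated). The paper's proof ends with this one-line inspection; your proposed verification via the kernel-direction cusp condition would also work, but is not needed.
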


\begin{proof} By singularity theory the family $g_\mu$ is stably right-left equivalent to $g_\mu(x,y)=y^3-x^2y+\mu_3(x^2 + y^2) + \mu_2 y + \mu_1 x$ (see table \ref{tab:ThomCatastrophes}).
By lemma \ref{lem:unfoldBseries} the family $\nabla g_\mu$ constitutes a versal unfolding of $\nabla g$ in $\R[[x,y]]^2$ which can be made universal by adding the term $\mu_4 \begin{pmatrix}y\\0
\end{pmatrix}$. This gives
\begin{equation*}
f_\mu(x,y)=\begin{pmatrix}
-2xy\\
3y^2-x^2
\end{pmatrix}
+\mu_1 \begin{pmatrix}
1\\0
\end{pmatrix}
+\mu_2 \begin{pmatrix}
0\\1
\end{pmatrix}
+2\mu_3 \begin{pmatrix}
x\\y
\end{pmatrix}
+\mu_4 \begin{pmatrix}
y\\0
\end{pmatrix}.
\end{equation*}

We have
\begin{equation}\label{eq:breakD4meq}
\det \D f_\mu(x,y)
=-4\left(x-\frac {\mu_4}4\right)^2 
-12\left(y-\frac {\mu_3}3\right)^2
+\frac {16}3 \mu_3^2 + \frac 14 \mu_4^2.
\end{equation}
At any $\mu_3,\mu_4,x,y$ with $\det \D f_\mu(x,y)=0$ a bifurcation takes place with parameters $\mu_1,\mu_2$ uniquely determined by $f_\mu(x,y)=0$. Inspecting \eqref{eq:breakD4meq} we see that the generic codimension-2 cusp bifurcations, which occur for $\mu_4=0$, survive a perturbation but cannot merge to a higher codimensional bifurcation if $\mu_4\not=0$.
\end{proof}

The fact that singularities break under small perturbations which are generic in the roots-of-a-function problem and singularities occur which do not exist in the exact problem illustrates that the roots-of-a-function problem is different to the gradient zero problem and demonstrates the importance of the preservation of symplectic structure when calculating bifurcation diagrams for Hamiltonian boundary value problems. 


 \section{Separated Lagrangian problems}\label{sec:DirichletExtra}

Given the significance of Dirichlet-, Neumann-, Robin- boundary value problems in applications, let us analyse the bifurcation behaviour in a problem class which we refer to as \textit{separated Lagrangian boundary value problems}.
We recall results from \cite{bifurHampaper} and analyse structure that is present in the data of such problems which can help to locate bifurcation points numerically. As an example, we locate a $D$-series bifurcation in a H{\'e}non-Heiles-type system.

\subsection{Definitions and set-up}\label{subsec:introDiri}


\begin{definition}[Separated (Lagrangian) boundary value problem]\label{def:LagDirproblem}
Let $(M,\omega)$ and $(M',\omega')$ be two $2n$-dimensional symplectic manifolds. Consider a symplectic map $\phi \colon M \to M'$ and $n$-dimensional submanifolds $\Lambda \subset M$ and $\Lambda' \subset M'$. The collection $(\phi, \Lambda,\Lambda')$ is called a {\em separated boundary value problem}. Its solution is given as
\[
\{ z \in \Lambda \, | \, \phi(z) \in \Lambda'\} = \phi^{-1}(\Lambda') \cap \Lambda.
\]
If $\Lambda \subset M$ and $\Lambda' \subset M'$ are Lagrangian manifolds then $(\phi, \Lambda,\Lambda')$ is called a {\em separated Lagrangian boundary value problem}.
\end{definition}

\begin{remark}
A submanifold $\Lambda \times \Lambda'$ is Lagrangian in $(M \times M', \omega \oplus -\omega')$ if and only if $\Lambda \subset M$ and $\Lambda' \subset M'$ are Lagrangian submanifolds. Therefore, the separated boundary value problems which are Lagrangian boundary value problems (definition \ref{def:LagBVP}) are exactly the separated Lagrangian boundary value problems.
\end{remark}



\begin{observation}
Separated Lagrangian boundary value problems $(\phi, \Lambda,\Lambda')$ can be localized near a solution $z \in \Lambda$ with $z'=\phi(z) \in M'$: shrink $M$ to a neighbourhood $\tilde M$ of $z$, $M'$ to a neighbourhood $\tilde M'$ of $z'$ and consider $(\phi, \Lambda\cap \tilde M,\Lambda' \cap \tilde M')$.
\end{observation}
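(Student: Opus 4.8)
The plan is to check that the localized triple $(\phi, \Lambda\cap\tilde M, \Lambda'\cap\tilde M')$ again satisfies Definition \ref{def:LagDirproblem} and that its solution set is precisely the trace on $\tilde M$ of the original solution set $\phi^{-1}(\Lambda')\cap\Lambda$. Concretely, three things must be verified: that the restricted map is symplectic, that the two intersected submanifolds remain Lagrangian, and that the local and global solution sets agree inside $\tilde M$. None of these is deep, so the task is mostly to organise the pointwise nature of the Lagrangian condition and to fix the order in which the neighbourhoods are chosen.

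First I would observe that being Lagrangian is a pointwise condition on tangent spaces. Since $\Lambda\cap\tilde M$ is an open subset of $\Lambda$, it is again an $n$-dimensional submanifold, and for every $p\in\Lambda\cap\tilde M$ one has $T_p(\Lambda\cap\tilde M)=T_p\Lambda$, on which $\omega$ vanishes by hypothesis; hence $\Lambda\cap\tilde M$ is Lagrangian in $(\tilde M,\omega|_{\tilde M})$, and symmetrically $\Lambda'\cap\tilde M'$ is Lagrangian in $(\tilde M',\omega'|_{\tilde M'})$. The restriction $\phi|_{\tilde M}$ is symplectic because the identity of forms $\phi^\ast\omega'=\omega$ survives restriction to an open subset. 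Thus all structural requirements of a separated Lagrangian boundary value problem are inherited automatically.

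The only point requiring care — and the step I would treat most carefully — is that the restricted map must land in $\tilde M'$, so that $(\phi|_{\tilde M},\Lambda\cap\tilde M,\Lambda'\cap\tilde M')$ is a well-defined separated boundary value problem rather than a formal triple. I would fix $\tilde M'$ first and then use continuity of $\phi$ at $z$, together with $\phi(z)=z'\in\tilde M'$, to shrink $\tilde M$ until $\tilde M\subset\phi^{-1}(\tilde M')$, which guarantees $\phi(\tilde M)\subset\tilde M'$. With this choice of quantifier order the solution set of the localized problem is
\[
(\phi|_{\tilde M})^{-1}(\Lambda'\cap\tilde M')\cap(\Lambda\cap\tilde M)
=\phi^{-1}(\Lambda')\cap\Lambda\cap\tilde M,
\]
i.e.\ exactly the original solution set intersected with $\tilde M$. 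Hence no solutions are created or destroyed inside $\tilde M$, and the subsequent bifurcation analysis may legitimately be carried out on the localized problem.
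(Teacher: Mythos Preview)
Your proposal is correct. The paper states this as a self-evident observation without any accompanying proof, so your careful verification---that openness preserves the Lagrangian condition pointwise, that restriction to an open set preserves symplecticity of $\phi$, and that the neighbourhoods should be chosen with $\tilde M\subset\phi^{-1}(\tilde M')$ so the restricted map is well-defined and the solution sets agree locally---simply makes explicit what the authors leave to the reader.
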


\begin{observation}[universal local coordinate description]
Classical Dirichlet problems, Neumann problems and Robin-type boundary value problems represented as in figure \ref{fig:RobinFish} constitute separated Lagrangian boundary value problems. Moreover, all separated Lagrangian boundary value problems are locally equivalent:
by Darboux-Weinstein's theorem neighbourhoods of Lagrangian submanifolds are locally symplectomorphic to neighbourhoods of the zero section of the cotangent bundle over the submanifolds \cite[Corollary 6.2.]{WEINSTEIN1971329}. Therefore, a separated Lagrangian boundary value problem $(\phi\colon (M,\omega) \to (M',\omega'), \Lambda,\Lambda')$ is locally given as
\begin{equation}\label{eq:DirichletProblem}
x=x^\ast, \quad  \phi^X (x,y)=X^\ast,
\end{equation}
with local Darboux coordinates $(x,y)=(x^1,\ldots x^n, y_1,\ldots y_n)$ for $M$, $(X,Y)=(X^1,\ldots X^n, Y_1,\ldots Y_n)$ for $M'$ and $x^\ast, X^\ast \in \R^{2n}$. In \eqref{eq:DirichletProblem} the symbol $\phi^X$ is a shortcut for $X \circ \phi$. 
This means that Dirichlet-, Neumann- and Robin- boundary conditions can be treated on the same footing in the bifurcation context. In contrast, periodic boundary conditions are not separated. This manifests in a different bifurcation behaviour \cite[Prop. 3.3.+3.4]{bifurHampaper}.
\end{observation}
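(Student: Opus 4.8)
The plan is to split the statement into its two assertions and treat them in turn: first, that the classical Dirichlet, Neumann and Robin problems really are separated Lagrangian boundary value problems, and second, that every such problem is locally equivalent to the normal form \eqref{eq:DirichletProblem}. For the first assertion I would read off the boundary manifolds $\Lambda,\Lambda'$ from Example \ref{ex:DNRbdflow} and verify the Lagrangian condition $\omega|_\Lambda=0$ by hand. For the Dirichlet case $\Lambda=\{x=x^\ast\}$ and $\Lambda'=\{X=X^\ast\}$ are cotangent fibres, on which $\d x^j=0$, so $\omega=\sum_j \d x^j\wedge \d y_j$ restricts to $0$; the Neumann case $\Lambda=\{y=y^\ast\}$ is the same statement with the roles of $x$ and $y$ exchanged; and for Robin, $\Lambda=\{x^j+\alpha_0^j y^j=\beta_0^j\}$, whose tangent vectors satisfy $\d x^j=-\alpha_0^j\,\d y^j$, and a one-line computation shows $\omega$ vanishes on them. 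Each $\Lambda$ and $\Lambda'$ is $n$-dimensional, hence Lagrangian, and by the preceding remark $(\phi,\Lambda,\Lambda')$ is then a separated Lagrangian boundary value problem.

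For the second assertion the engine is Weinstein's Lagrangian neighbourhood theorem cited in the statement, which I would apply twice and independently: once to $\Lambda\subset(M,\omega)$ and once to $\Lambda'\subset(M',\omega')$. This yields symplectomorphisms $\Psi$ from a neighbourhood of $\Lambda$ onto a neighbourhood of the zero section of $T^\ast\Lambda$, and $\Psi'$ from a neighbourhood of $\Lambda'$ onto a neighbourhood of the zero section of $T^\ast\Lambda'$, each carrying its Lagrangian submanifold onto the zero section. I would then choose Darboux coordinates $(x,y)$ on $T^\ast\Lambda$ with $y$ labelling the base $\Lambda$ and $x$ the fibre, so that $\omega=\sum_j \d x^j\wedge \d y_j$ and the image of $\Lambda$ is exactly $\{x=0\}$, which after an optional translation of the fibre coordinate becomes $\{x=x^\ast\}$; the identical construction on $M'$ gives coordinates $(X,Y)$ in which $\Lambda'=\{X=X^\ast\}$. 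Conjugating $\phi$ by these maps, i.e.\ replacing it by $\Psi'\circ\phi\circ\Psi^{-1}$, produces again a symplectic map, which I continue to denote $\phi$.

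It then remains to express the solution set $\phi^{-1}(\Lambda')\cap\Lambda$ in these coordinates. Since $\Psi(\Lambda)=\{x=x^\ast\}$ and $\Psi'(\Lambda')=\{X=X^\ast\}$, a point lies in the solution set precisely when $x=x^\ast$ and the $X$-component of its image equals $X^\ast$, i.e.\ $\phi^X(x,y)=X^\ast$ with $\phi^X=X\circ\phi$; this is exactly \eqref{eq:DirichletProblem}, and since the argument never used any special feature of the three classical problems it shows at once that they, and every separated Lagrangian boundary value problem, reduce locally to the same model. I expect the difficulty here to be bookkeeping rather than depth: Weinstein's theorem normalises a single Lagrangian inside a single symplectic manifold, so the main care is in applying it \emph{separately} on $M$ and on $M'$ and keeping the two coordinate systems independent, and in adopting the convention that the fibre coordinates (whose vanishing cuts out the zero section) are the $x$'s, so that membership in $\Lambda$ reads as $x=x^\ast$ rather than as a condition on $y$. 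No compatibility between the two normalisations is required, because the only remaining datum, the symplectic map $\phi$, is simply transported along.
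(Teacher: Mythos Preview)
Your proposal is correct and follows the same route as the paper: the observation carries no separate proof, and its embedded justification is precisely the one you spell out---apply Weinstein's Lagrangian neighbourhood theorem independently to $\Lambda\subset M$ and $\Lambda'\subset M'$ to straighten each to a fibre $\{x=x^\ast\}$, $\{X=X^\ast\}$ in local Darboux coordinates, then read off \eqref{eq:DirichletProblem}. Your verification that the Dirichlet, Neumann and Robin boundary manifolds are Lagrangian is a useful expansion of what the paper leaves implicit (having noted the form of $\Pi$ in Example~\ref{ex:DNRbdflow}), and your remark that the two normalisations need no compatibility because $\phi$ is simply transported is exactly the point.
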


\subsection{Structures induced by separated Lagrangian boundary conditions}

%
%
%
%

Let us consider the separated Lagrangian boundary value problem \eqref{eq:DirichletProblem}
on the phase space $M=M'=\R^{2n}$ with the standard symplectic form $\sum_{j=1}^n \d x^j \wedge \d y_j$.
Introducing a generic parameter $\mu$ in the map $\phi$ or in the boundary condition, the bifurcation diagram of  \eqref{eq:DirichletProblem} can be viewed as
\begin{equation}\label{eq:DirichletBifurSet}
\{(\mu,y) \,  | \, h_\mu(y) = 0 \} 
\end{equation}
with
\begin{equation}\label{eq:mapDirichlet}
h_\mu(y) = \phi_\mu^X(x^\ast,y)-X_\mu^\ast.
\end{equation}
On the other hand, as \eqref{eq:DirichletProblem} is a separated Lagrangian boundary value problem, the problem is locally equivalent to a gradient-zero problem $\nabla g_\mu(z)=0$ in $n$ variables \cite[Prop.3.3]{bifurHampaper}.
In contrast to $h_\mu$, the maps $\nabla g_\mu$ arise as gradients of smooth maps such that the bifurcation behaviour is governed by catastrophe theory. Indeed, the bifurcations which occur as generic bifurcations in gradient-zero problems $\nabla g_\mu(z)=0$ with smooth families of maps $g_\mu$ in $n$ variables occur as generic bifurcations in separated Lagrangian boundary value problems with $2n$-dimensional phase spaces. 

The gradient structure is not visible in \eqref{eq:mapDirichlet}. Naively, it appears like the problem $h_\mu(y)=0$ should behave like a generic roots-of-a-function-type problem for maps in $n$ variables. However, we know that it behaves like the gradient-zero-problem, so where has the gradient structure gone?
The map $\phi_\mu^X$ is a component of the map $\phi_\mu$ which is symplectic. However, symplecticity of the Jacobian matrix $\D \phi(x,y)$ does \textit{not} force any extra structure on the submatrix $D_y \phi^X(x,y)$ at points $(x,y)$ in the phase space. Indeed, the extra structure hides away in the following detail:
for $n>1$ those small perturbations $\tilde h(y) = h_\mu(y)+\xi_\mu(y)$ of $h_\mu$ which are required to break gradient-zero bifurcations leading to a roots-of-a-function-type behaviour do \textit{not} come from \textit{small} perturbations of $\phi$ through symplectic maps.
In other words, for a fixed parameter $\mu^\ast$ it is impossible to obtain a versal roots-of-a-function-type unfolding of $h_{\mu^\ast}$ by varying $\phi$ through symplectic maps to produce an unfolding $h_\mu$ via \eqref{eq:mapDirichlet}.

For practical purposes, formulating the problem \eqref{eq:DirichletProblem} in the form \eqref{eq:DirichletBifurSet} is beneficial because passing to generating functions to analyse the bifurcation behaviour becomes obsolete as the following proposition justifies. 

\begin{prop}\label{prop:kernelDIM}
If the dimension of the kernel of the Jacobian matrix of the map \eqref{eq:mapDirichlet} at a parameter value $\mu$ and a value $y$ is $m$ then the kernel of the Hessian of the map $g_\mu$ of the corresponding gradient-zero-problem at $(\mu,y)$ is $m$.
\end{prop}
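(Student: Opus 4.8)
The plan is to relate the two linearizations directly through the local symplectomorphism that converts the separated boundary value problem into the gradient-zero problem. The key fact I would rely on is the construction recalled just before Table~\ref{tab:ThomCatastrophes} (and in \cite{bifurHampaper}): there is a symplectomorphism $\Psi$, defined near the solution point, carrying $\Lambda$ into the zero section of $T^\ast\Lambda'$ and turning $\Gamma\cap U$ into the graph of a closed $1$-form $\beta=\d S$, so that solutions of $h_\mu(y)=0$ correspond to critical points of $g_\mu$ with $\nabla g_\mu = \beta$ in the induced coordinates. Differentiating this identity once more, the Hessian $\Hess g_\mu(y)$ equals the Jacobian $\D\beta$, which is exactly the Jacobian of the map $h_\mu$ read in the new coordinates up to composition with the (invertible) coordinate change.

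More concretely, I would argue as follows. First, fix the parameter $\mu$ and the point $y$ and pass to the local normal form \eqref{eq:DirichletProblem}: the solution set is cut out by $h_\mu(y)=\phi_\mu^X(x^\ast,y)-X^\ast_\mu$. Second, invoke the generating-function description: because $\phi_\mu$ is symplectic and the mixed-partial (generating) condition holds, there is a scalar $g_\mu$ with $\nabla g_\mu(y) = A(y)\, h_\mu(y)$ where $A(y)$ is an invertible matrix coming from the change of coordinates $\Psi$ (and the identification of $\Lambda'$ with the base). Third, differentiate: at a solution, where $h_\mu(y)=0$, the product rule gives $\Hess g_\mu(y) = A(y)\, \D h_\mu(y)$, since the term involving $\D A(y)$ is multiplied by $h_\mu(y)=0$. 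Since $A(y)$ is invertible, $\Hess g_\mu(y)$ and $\D h_\mu(y)$ have the same rank, hence the same kernel dimension, which is the claim $m=m$.

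The step that needs the most care is establishing the relation $\nabla g_\mu = A\cdot h_\mu$ with an \emph{invertible} $A$, rather than merely asserting that critical points and roots coincide as sets. The coincidence of the zero sets is immediate from the correspondence in \cite{bifurHampaper}, but for the Hessian/Jacobian statement one needs that the two maps differ by left-multiplication by an invertible matrix near the solution. This follows because the coordinate change $\Psi$ is a diffeomorphism (so its derivative is invertible) and the primitive $S$ is obtained by integrating the $1$-form $\beta$ representing $\Psi(\Gamma\cap U)$ over the zero section; the reduction to $y$-coordinates uses that $\pi|_{\Psi(\Gamma\cap U)}$ is an injective immersion, guaranteeing the relevant submatrix is nonsingular. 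I would make this precise by writing $\phi_\mu$ in terms of its generating function, as in Example~\ref{ex:bvpFold}, so that $h_\mu$ and $\nabla g_\mu$ are literally related by the invertible mixed-Hessian block $\left(\p^2 S/\p y\,\p Y\right)$, and then the rank comparison is a one-line consequence of the chain rule evaluated on the solution set.
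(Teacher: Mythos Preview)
Your argument is correct, but it takes a different route from the paper's. The paper's proof is purely geometric: it observes that both quantities equal the dimension of the intersection $T_p\Gamma_\mu \cap T_p\Lambda$ at a solution $p$. For $\D h_\mu$ this is shown by writing down explicit spanning matrices $M_1,M_2$ for the two tangent spaces in the frame $\partial_{x},\partial_{y},\partial_X,\partial_Y$ and reading off that the intersection is parametrised by $\ker \D_y\phi^X_\mu=\ker \D h_\mu$. For $\Hess g_\mu$ the equality is implicit in the construction of $g_\mu$: the symplectomorphism $\Psi$ carries $(\Gamma_\mu,\Lambda)$ to (graph of $\d g_\mu$, zero section) in $T^\ast\Pi$, and the tangent intersection of those two is exactly $\ker\Hess g_\mu$; a diffeomorphism preserves the dimension of a tangent intersection.

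Your approach instead tries to exhibit an explicit invertible factor $A(y)$ with $\nabla g_\mu=A\,h_\mu$ and then differentiates at a solution. This is perfectly valid, and the product-rule step is clean. The delicate point, which you rightly flag, is the existence and invertibility of $A$. Two cautions: the generating function of Example~\ref{ex:bvpFold} produces a scalar in $2n$ variables $(y,Y)$ rather than the $n$-variable $g_\mu$ referenced here, so you would still need a Morse-lemma reduction (which preserves corank) to land on the $n$-variable statement; and the particular type of generating function requires its mixed Hessian block to be invertible, a condition that should be checked to be independent of the degeneracy of $\D_y\phi^X_\mu$ you are analysing. The paper's geometric argument sidesteps both issues by never naming $A$ at all, at the cost of leaving the link to $\Hess g_\mu$ implicit; your algebraic route is more explicit but carries more bookkeeping.
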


\begin{remark} In other words, proposition \ref{prop:kernelDIM} says that if the dimension of the kernel of the Jacobian matrix of the map \eqref{eq:mapDirichlet} at a parameter value $\mu$ and a value $y$ is $m$ then the fully reduced form of the singularity in the corresponding gradient-zero-problem has $m$ variables. This is helpful when looking up singularities in classification tables and allows, for instance, to easily differentiate between $A$- and $D$-series singularities.
\end{remark}

\begin{proof} For a given parameter $\mu$ solutions to \eqref{eq:DirichletProblem} correspond to points in the intersection of the graph $\Gamma_\mu = \{(x,y,\phi^X_\mu(x,y),\phi^Y_\mu(x,y)) \, | \, (x,y) \in \R^{2n}\}$ and  $\Lambda = \{(x^\ast,y,X^\ast,Y)\, | \, y,Y \in \R^n\}$. In the frame $\frac{\p}{\p x^1},\ldots,\frac{\p}{\p x^n},\frac{\p}{\p y_1},\ldots,\frac{\p}{\p y_n}$ the tangent spaces to $\Gamma_\mu$ are spanned by the columns of the matrix $M_1$ while the tangent spaces to $\Lambda_\mu$ are spanned by the columns of the matrix $M_2$ with $M_1$ and $M_2$ given as

\begin{equation*}
M_1=\begin{pmatrix}
\Id_{ n}&0\\ 
0 & \Id_{n}  \\
D_x \phi^X_\mu & \D_y \phi^X_\mu\\
D_x \phi^Y_\mu & \D_y \phi^Y_\mu\\
\end{pmatrix}
\qquad
M_2=\begin{pmatrix}
0&0\\
\Id_{n}&0\\
0&0\\
0&\Id_{n }
\end{pmatrix}.
\end{equation*}
Here $\Id_n$ denotes an $n$-dimensional identity matrix. 
Since the Jacobian matrix of $h_\mu$ coincides with $\D_y \phi^X_\mu$, the dimension of the kernel of $\D h_\mu$ determines the dimension of the intersection of the tangent spaces at solutions in $\Gamma_\mu \cap \Lambda$.\end{proof}

We can formulate the following

\begin{corollary}\label{cor:DseriesifJac0}
If $n=2$ then the Jacobian matrix $\D_y \phi^X_\mu$ vanishes at a $D$-series bifurcation.
\end{corollary}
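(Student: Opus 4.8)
The plan is to combine Proposition~\ref{prop:kernelDIM} with the standard fact from singularity theory that $D$-series singularities have corank $2$, i.e.\ that the Hessian of their normal-form germ vanishes identically at the singular point. Since the proof of Proposition~\ref{prop:kernelDIM} in fact shows that the kernel of $\D_y \phi^X_\mu$ and the kernel of $\Hess g_\mu$ have the same dimension, it suffices to verify that, for $n=2$, the latter kernel is $2$-dimensional at a $D$-series bifurcation.

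First I would recall that at such a bifurcation the local function $g_\mu$ is stably right-left equivalent to the germ $g(x,y)=x^2 y \pm y^{k+1}$ for some $k\ge 2$ (table~\ref{tab:ThomCatastrophes}). Because $n=2$ matches the number of variables of this germ exactly, there is no spectator quadratic summand, so the reduced form at the singular point is genuinely $g$ in the two variables $(x,y)$ and the reduced Hessian is the full $2\times 2$ Hessian. Next I would compute, from $\nabla g = (2xy,\; x^2 \pm (k+1)y^k)$,
\[
\Hess g(x,y)=\begin{pmatrix} 2y & 2x \\ 2x & \pm k(k+1)y^{k-1}\end{pmatrix},
\]
and observe that at the singular point $(x,y)=(0,0)$, using $k\ge 2$ so that $y^{k-1}$ vanishes there, one obtains $\Hess g(0,0)=0$. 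Hence the kernel of the Hessian has dimension $2$.

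Finally, Proposition~\ref{prop:kernelDIM} translates this into a $2$-dimensional kernel for the Jacobian matrix $\D_y \phi^X_\mu$ at the corresponding point. When $n=2$ this matrix is $2\times 2$ and acts on $\R^2$, so a kernel of dimension $2$ is all of $\R^2$; therefore $\D_y \phi^X_\mu=0$, as claimed. I expect no genuine obstacle: the argument is essentially dimension counting once Proposition~\ref{prop:kernelDIM} is in hand. The only point demanding care is confirming that at $n=2$ the reduced germ carries no additional spectator variables, so that the vanishing of the reduced Hessian really amounts to the full Jacobian being zero rather than merely a rank drop of $2$ inside a larger block.
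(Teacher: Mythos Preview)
Your proposal is correct and follows essentially the same route the paper intends: the corollary is stated without proof immediately after Proposition~\ref{prop:kernelDIM} and its remark, which already notes that the kernel dimension of $\D_y\phi^X_\mu$ equals the number of variables in the fully reduced form of the corresponding gradient-zero singularity, allowing one to distinguish $A$- from $D$-series. Your explicit computation of $\Hess g(0,0)=0$ merely spells out the well-known fact that $D$-series germs have corank~$2$, after which the conclusion for $n=2$ is immediate.
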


\subsection{Numerical example. H{\'e}non-Heiles Hamiltonian system}\label{subsec:HenonHeilsD4}

In the following we consider a H{\'e}non-Heiles system, which is a Hamiltonian system originally derived to model galactic dynamics and is known to exhibit chaotic behaviour \cite{Henon1983,Tabor1989}.
Consider the Hamiltonian
\begin{equation}\label{eq:HenonHeilsHam}
H(x,y)=\frac 12 \|y\|^2+\frac 12 \|x\|^2-10\left(x_1^2 x_2-\frac{x_2^3}3 \right)
\end{equation}
on the phase space $\R^2\times \R^2$.
In \eqref{eq:HenonHeilsHam} the norm $\| . \|$ denotes the Euclidean norm on $\R^2$. We obtain a symplectic map $\phi$ by integrating Hamilton's equations
\[
\dot x = \nabla_y H(x,y),\quad \dot y = -\nabla_x H(x,y)
\]
up to time $\tau=1$ using the 2nd order symplectic St\"ormer-Verlet scheme with 10 time-steps. 
We consider the Dirichlet-type problem
\begin{equation}\label{eq:DirichletProblem2}
x=x^\ast, \quad  \phi^X (x,y)=X^\ast.
\end{equation}
This time we let $x^\ast$ and $X^\ast$ be the parameters of the problem. The (high-dimensional) bifurcation diagram can be thought of as the graph of $\phi$ plotted over the parameter space $(x^\ast,X^\ast)=(x,X)$.
To reduce dimensionality we fix the parameter $x_1^\ast=0$ leaving the parameters $x_2$, $X_1$, $X_2$ free. The level bifurcation set, i.e.\ the set of points in the parameter space at which a bifurcation occurs in a chosen subset $U$ of the phase space, is given as
\[ \{ (x_2,\phi^X(0,x_2,y_1,y_2)) \; | \; \det D_y \phi^X(0,x_2,y_1,y_2)=0, (0,x_2,y_1,y_2)\in U \}. \]
Figure \ref{fig:D4m} shows the level bifurcation set of the problem near an elliptic umbilic singularity $D_4^-$. Derivatives of the symplectic approximation to $\phi$ were obtained using automatic differentiation. The $D$-series bifurcation was found numerically by solving $(x_2,y_1,y_2) \mapsto D_y \phi(0,x_2,y_1,y_2)=0$ as justified in corollary \ref{cor:DseriesifJac0}. We see that the elliptic umbilic bifurcation is captured correctly. A non-symplectic integrator, however, breaks this bifurcation, see \ref{app:breakD4RK}.

\begin{figure}
\begin{center}
\includegraphics[width=0.7\textwidth]{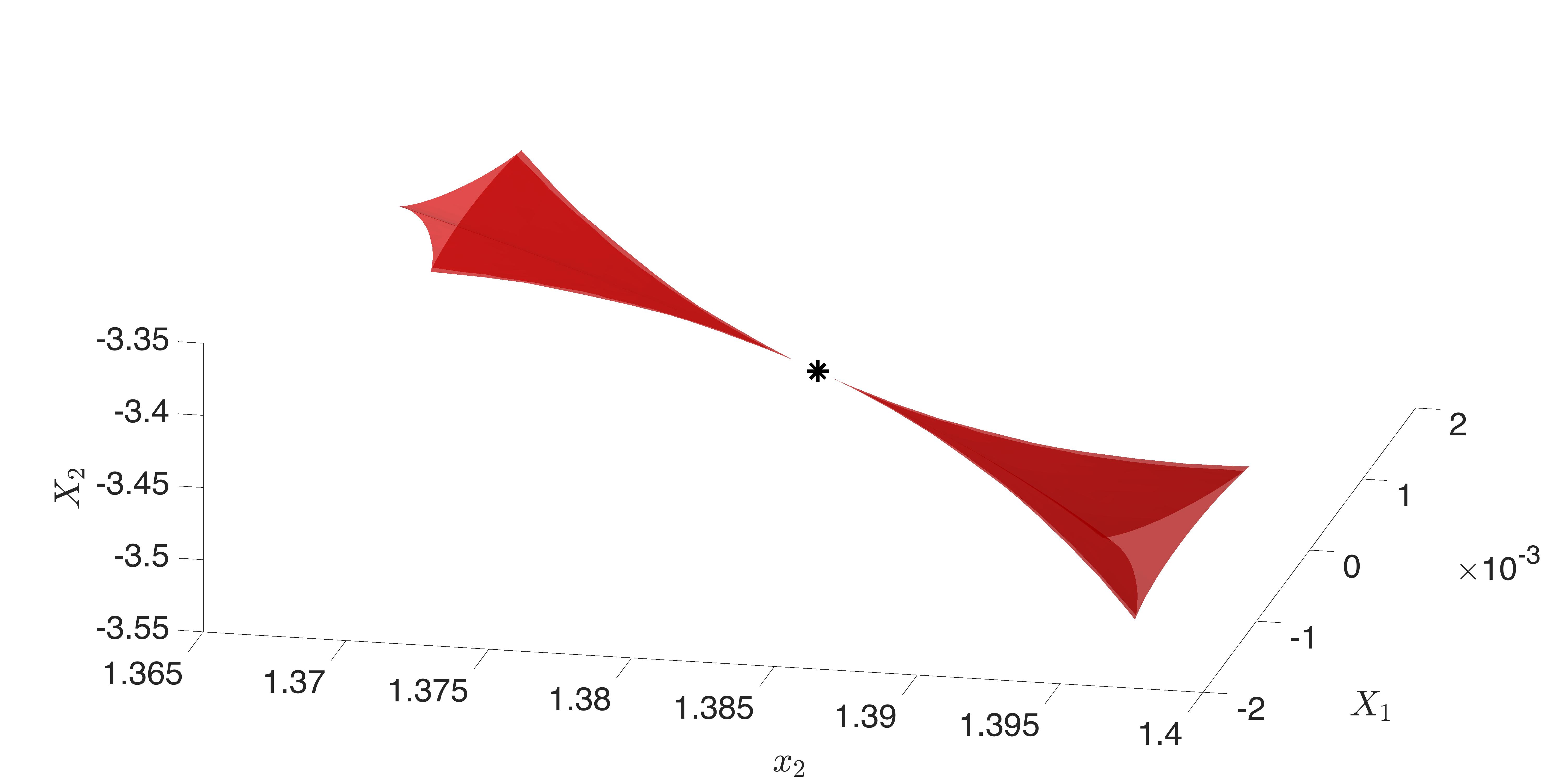}
\end{center}
\caption{Elliptic umbilic $D_4^-$ in the problem \eqref{eq:DirichletProblem} for the numerical time-1-map of the H{\'e}non-Heiles system \eqref{eq:HenonHeilsHam} where the boundary values are parameters and $x_1^\ast=0$ is fixed to reduce dimensionality. The numerical flow was obtained using the 2nd order symplectic St\"ormer-Verlet scheme with 10 time-steps. Derivatives were obtained using automatic differentiation.}\label{fig:D4m}
\end{figure}

\subsection{Computation of conjugate loci}\label{subsub:applgeod}

\subsubsection{Symplectic structure in the geodesic conjugate-points problem}

Consider an $n$-dimensional Riemannian manifold $(N,g)$ with cotangent bundle $T^\ast N$ equipped with the canonical symplectic structure $\omega$
and local Darboux coordinates $q^1,\ldots,q^n,p_1,\ldots,p_n$. 
Define the Hamiltonian
\begin{equation}\label{eq:HamGeolocal}
H(q,p) = \frac 12 \sum_{i,j=1}^{n} g^{ij}(q) p_i p_j,
\end{equation}
where $g^{ij}$ is the $(i,j)$-entry of the inverse of the matrix representation of the Riemannian metric $g$ in the coordinate frame $\frac{\p}{\p q_1},\ldots,\frac{\p}{\p q_{n}}$. 
Using $g$ as a bundle isomorphism between the tangent bundle $TN$ and the co-tangent bundle $T^\ast N$, the motions in $(T^\ast N,\omega, H)$ correspond to the velocity vector fields of geodesics $\gamma$ on $(N,g)$, i.e.\ solutions of the geodesic equation
\[
\ddot \gamma^k = - \Gamma^k_{ij}\circ \gamma \cdot \dot \gamma^i \dot \gamma ^j,
\]
where $\gamma^j = q^j \circ \gamma$ denote the components of $\gamma$ and
\[
\Gamma^k_{ij} = \frac 12 \sum_{l=1}^{n} g^{kl}
\left(\frac{\p g_{jl}}{\p q_i}+\frac{\p g_{li}}{\p q_j}-\frac{\p g_{ij}}{\p q_l}\right)
\] the Christoffel symbols w.r.t.\ the Levi-Civita connection.
Recall that if $\gamma$ is a geodesic starting at time $t=0$ at $x$ with $\dot \gamma(0) = y$ then the dimension of the kernel of the exponential map evaluated at a point $y \in T_xN\subset TN$ corresponds to the number of linearly independent Jacobi fields\footnote{vector fields along a geodesic arising as variational vector fields for variations through geodesics} along $\gamma$ which vanish at $x$ and $\gamma(1)$. If there exists a non-trivial Jacobi vector field then the points $x$ and $\gamma(1)$ are called \textit{conjugate points} and the number of linearly independent Jacobi vector fields vanishing at $x$ and $\gamma(1)$ is called the \textit{multiplicity} of $x$.
The multiplicity of conjugate points on an $n$-dimensional Riemannian manifold cannot exceed $n-1$ \cite[Ch.5]{doCarmo}.





%
%
%
There is a variety of aspects to the conjugate-points problem on a Riemannian manifold. An analysis of how cusp points in a conjugate locus can bifurcate as the reference point moves is presented in \cite{WATERS20171}. A functional analytic approach to the geodesic bifurcation problem (in an extended sense) can be found in \cite{Piccione2004}.
Calculating geodesics on submanifolds of the Euclidean space is often motivated by the task of finding distance minimising curves between two points. Several methods are presented in \cite{GeodesicCourseWork}. An approach using geodesics as homotopy curves can be found in \cite{Thielhelm2015}.
Moreover, the authors show in \cite{obstructionPaper} how the homogeneity of $H$ in \eqref{eq:HamGeolocal} imposes obstructions on the bifurcation behaviour and generalise this to systems with conformal-symplectic symmetries.

The Hamiltonian formulation reveals the symplectic structure in the problem of connecting two points by a geodesic as the start and endpoints move apart.
On a surface this structure is not relevant because the maximal multiplicity of conjugate points is 1 \cite[Ch.5]{doCarmo}.
Therefore, by proposition \ref{prop:kernelDIM} only singularities of degree 1 can occur.
For a generic setting this means that a small, possibly non-symplectic perturbation of the problem will only move such singularities slightly but would not change their type or remove them (proposition \ref{prop:Aseriespersist}).
However, if the Riemannian manifold is at least 3-dimensional one has to capture the symplectic structure in order to be able to find unbroken $D$-series bifurcations as explained in section \ref{sec:brokenBifur}. This requires using symplectic integrators. 

\subsubsection{Structure preserving discretisation}

The Hamiltonian \eqref{eq:HamGeolocal} is not separable so symplectic integration requires the use of an implicit method.
In the popular 2nd order symplectic St\"ormer-Verlet scheme, for instance, $\dim N$-dimensional equations have to be solved in each time-step causing high computational costs.
However, in applications $(N,g)$ is often given as a low codimensional submanifold of a Euclidean space where $g$ is the induced metric; a fact which can be exploited.
Indeed, on a codimension-$k$ submanifold the symplectic RATTLE method only requires a $k$ dimensional system of equations to be solved in each time-step. This is particularly efficient for hypersurfaces where $k=1$. A derivation of the general RATTLE method can be found in \cite[VII1.4]{GeomIntegration}. For our purposes it is advisable to have a derivative of the RATTLE-approximation of the geodesic exponential map available. This can be achieved using automatic differentiation. We present a 1-jet version of the RATTLE method for geodesics on hypersurfaces in \ref{app:geodesicsRATTLEformulas}.

Using RATTLE to compute geodesics of the hypersurface $f^{-1}(0)$ requires only the value and first derivative of $f$; the metric tensor and Christoffel symbols are not needed.
Jet-RATTLE, needed to reliably detect conjugate points, also requires the second derivative of $f$.


\subsubsection{Numerical examples using the jet-RATTLE method}

\begin{example}
Figure \ref{fig:CuspsGraph} shows the conjugate locus on the graph of the perturbed 2-dimensional Gaussian
\[0= f(q_1,q_2,q_3)= h(q_1,q_2)-q_3 = \exp(-q_1^2-0.9q_2^2)+0.01q_1^3+0.011q_2^3-q_3\]
to the point $q^\ast=(-1,0,h(-1,0))$, i.e.\ the points which are conjugate to $q^\ast$.
There are three geodesics connecting the start point $q^\ast$ marked as $\ast$ in the plot with a point in between the solid black lines in the $Q_1>0$ region.
Keeping $q^\ast$ fixed and varying the end point two of the geodesics merge in a fold bifurcation as the end point crosses over one of the solid black lines.
If the end point crosses the meeting point of the lines of folds all connecting geodesics merge into one.
The meeting point corresponds to a cusp singularity.

For numerical computations notice that the conjugate locus is the level bifurcation set of the Dirichlet problem for the geodesic equations on the graph of $h$, where the $(Q_1,Q_2)$-coordinate of the end point are the parameters of the problem.
We can discretise the 1-jet of the geodesic exponential map by applying the jet-RATTLE method to $f(q) = 0$. Let us refer to the $Q_1$ and $Q_2$-component of the numerical flow as $\Phi^{Q_{1,2}}$.
%
The matrix
\[A = \begin{pmatrix}
1&0\\0&1\\ \frac{\p f}{\p q_1}(q^\ast) & \frac{\p f}{\p q_2}(q^\ast)
\end{pmatrix}
\]
maps $\R^2$ to the tangent space of the graph of $h$ at $q^\ast$.
The level bifurcation set is obtained by calculating the zero-level set of
\[
\begin{pmatrix}p_1\\p_2\end{pmatrix} \mapsto \det D_{p_1,p_2}\Phi^{Q_{1,2}}\left(q^\ast,A \begin{pmatrix}p_1\\p_2\end{pmatrix}\right)\cdot A
\]
and mapping the set to the graph of $h$ using $\Phi$. The bifurcation behaviour persists and is also present in the unperturbed setting, where $h(q_1,q_2)=\exp(-q_1^2-q_2^2)$. 
\end{example}

\begin{example}
The plot to the left of figure \ref{fig:CuspsGraphSphere} shows the conjugate locus on the perturbed 2 dimensional ellipsoid
\[
(0.98q_1^2+0.97q_2^2+1.02q_3^2)-1/\pi^2+0.1(-q_1^3-1.2q_2^3+0.7q_3^3)=0
\]
to $q^\ast=(q^\ast_1,q^\ast_2,q^\ast_3)=(-0.316472,0,0)$ projected along $Q_1$ to the $Q_2$/$Q_3$ plain. Notice that $Q_2$, $Q_3$ constitute a coordinate system in the considered regime near the approximate anti-podal point of $q^\ast$.
We see a formation of cusps. 
On an unperturbed ellipsoid we see four cusp bifurcations as shown in the right hand side plot of figure \ref{fig:CuspsGraphEllipsoid} unless $q^\ast$ is an umbilic point of the ellipsoid, in which case the formation collapses to a point. This is known as the last geometric statement of Jacobi \cite{Itoh2004}. 
\end{example}

\begin{example}
The plot in the centre and to the right of figure \ref{fig:ellipticumbilichypersphere} shows a subset of the conjugate locus to $(q^\ast_1,q^\ast_2,q^\ast_3,q^\ast_4)=(-0.355367,0,0,0)$ on the perturbed 3 dimensional ellipsoid
\[
f(q) = 0.98 q_1^2+0.95 q_2^2+1.05 q_3^2+1.03 q_4^2-\frac 1 {\pi^2}+0.5 (q_1^3+1.1 q_2^3+0.9 q_3^3+1.05 q_4^3) =0.
\]
The functions $Q_2$, $Q_3$, $Q_4$ constitute a coordinate system in the considered regime. The variables $Q_2$, $Q_3$, $Q_4$ act as three parameters in the corresponding boundary value problem such that we can find bifurcations of codimension 3. Indeed, the conjugate locus contains an elliptic umbilic singularity occurring where three lines of cusps merge.
The plot to the left shows the position of the singularities in the tangent space at $q^\ast$ in spherical coordinates: the coordinates $p_1,p_2,p_3,p_4$ can be obtained by first mapping
\[(r,\theta,\phi) \mapsto \rho =(\rho_1,\rho_2,\rho_3)=(r \sin \theta \cos \phi, r \sin \theta \sin \phi,r \cos \theta)
\]
and then mapping $\rho \mapsto A \rho$, where $A$ is a $4 \times 3$ dimensional matrix whose columns are an orthonormal basis of the kernel of $p \mapsto \nabla f(q^\ast)p$ near $\begin{pmatrix}0_{1\times 3}\\ \Id_3\end{pmatrix}$. The plot in the centre and to the right can be obtained from the plot to the left by calculating the $p$ variables and applying the exponential map at $q^\ast$. We see that the elliptic umbilic bifurcation is captured qualitatively correctly. In contrast, a non-symplectic integrator breaks this  bifurcation (see \ref{app:breakD4RK}).
\end{example}

\begin{figure}
\begin{center}
\includegraphics[width=0.7\textwidth]{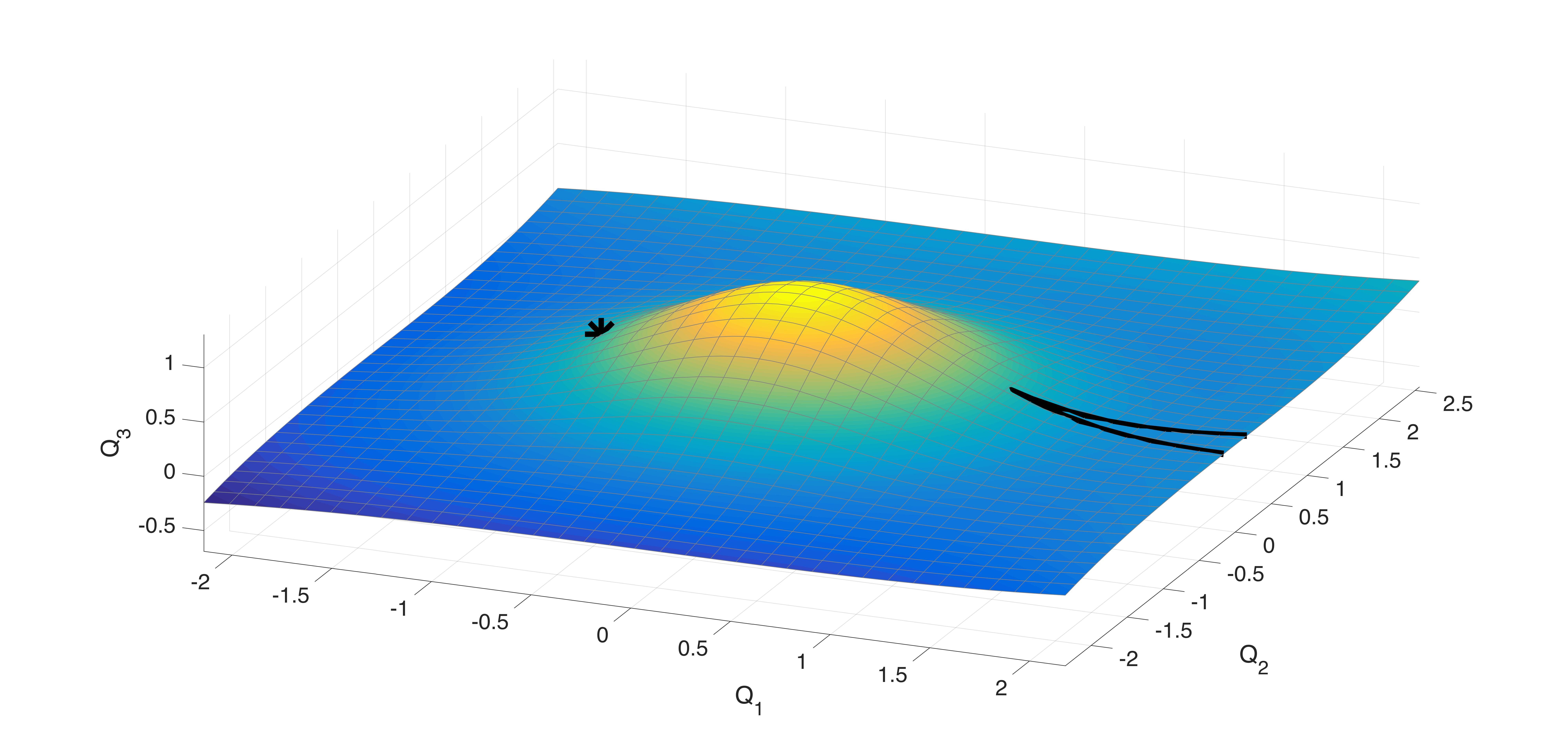}
\end{center}
\caption{Conjugate locus to $q^\ast (-1,0,h(-1,0))$ on the graph of a perturbed 2-dimensional Gaussian. The conjugate locus contains a cusp singularity.}\label{fig:CuspsGraph}
\end{figure}

\begin{figure}
\begin{center}
\includegraphics[width=0.4\textwidth]{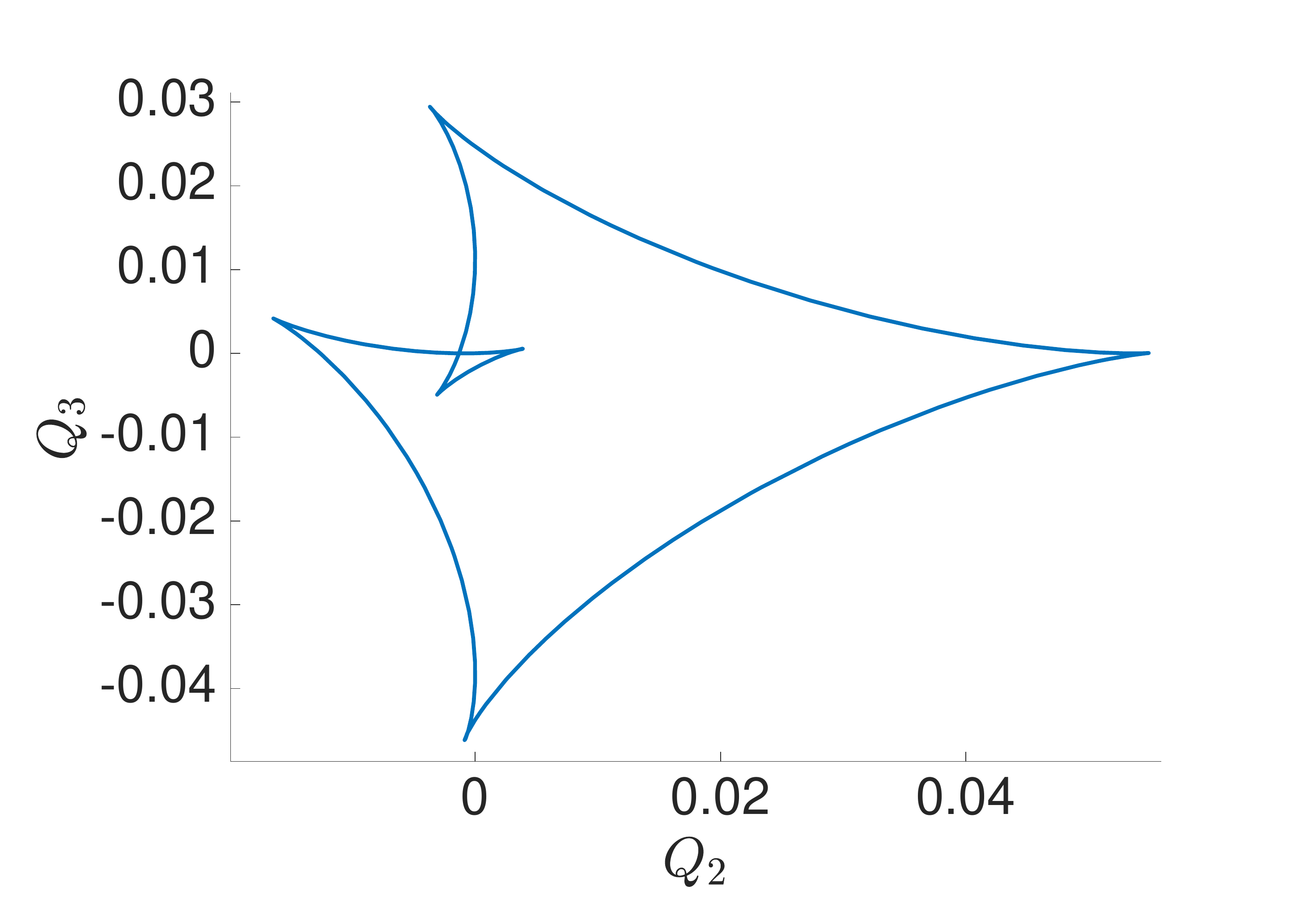}
\quad
\includegraphics[width=0.4\textwidth]{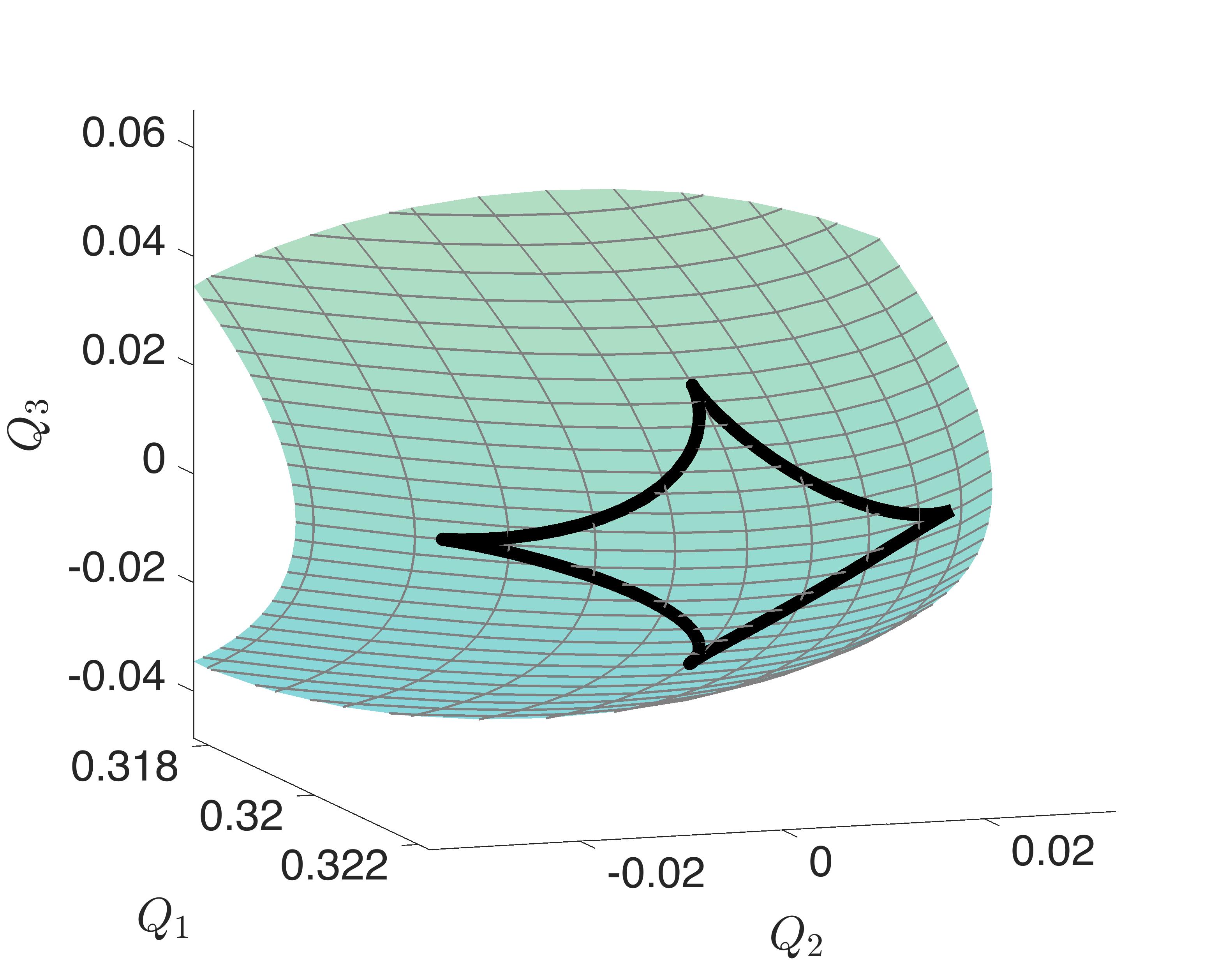}
\end{center}
\caption{Conjugate locus on a perturbed ellipsoid (left) and on an unperturbed ellipsoid (right). We see formations of cusps connected by lines of fold singularities.}\label{fig:CuspsGraphSphere}\label{fig:CuspsGraphEllipsoid}
\end{figure}


\begin{figure}
\begin{center}
\includegraphics[width=0.32\textwidth]{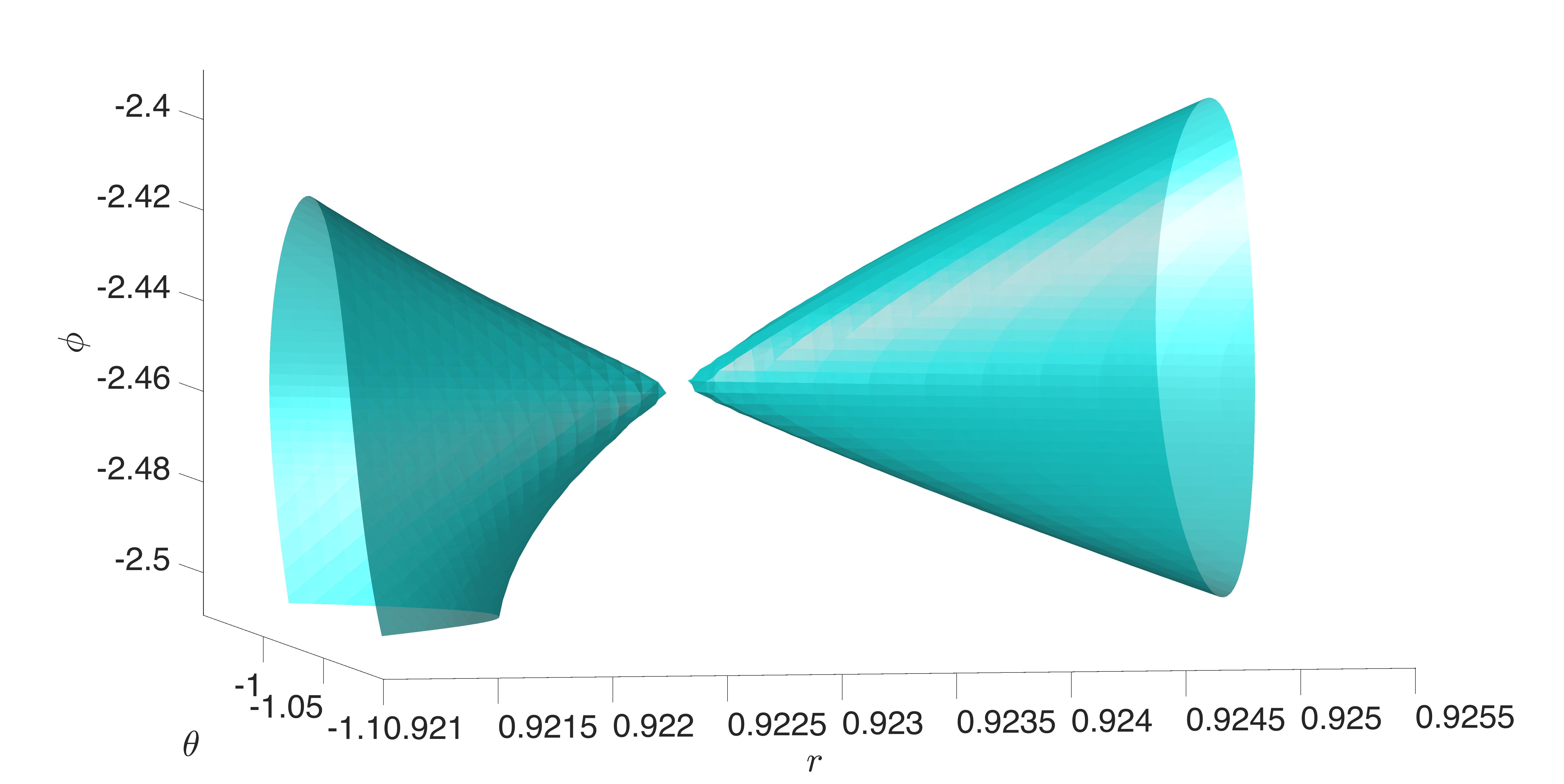}
\includegraphics[width=0.32\textwidth]{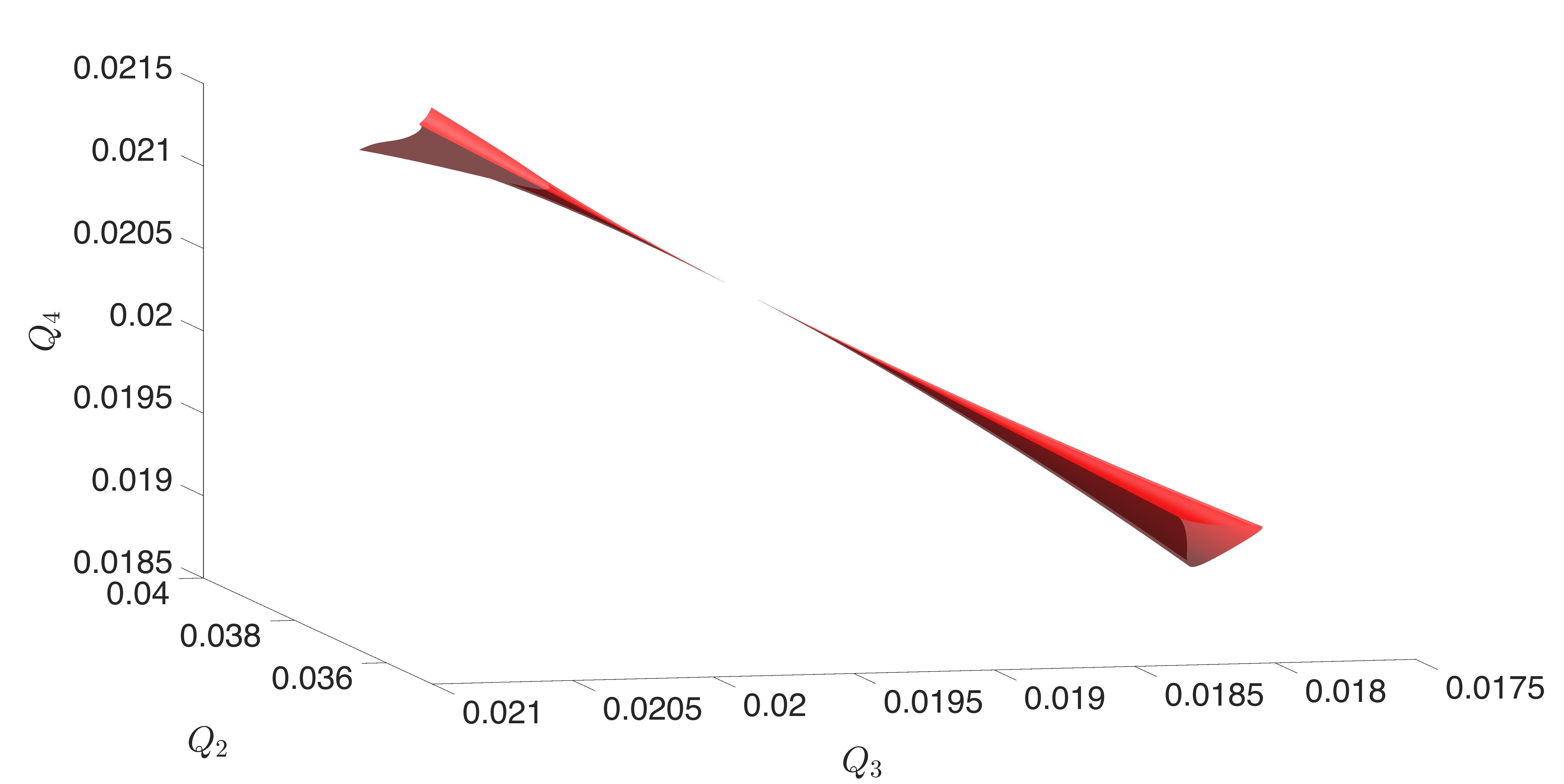}
\includegraphics[width=0.32\textwidth]{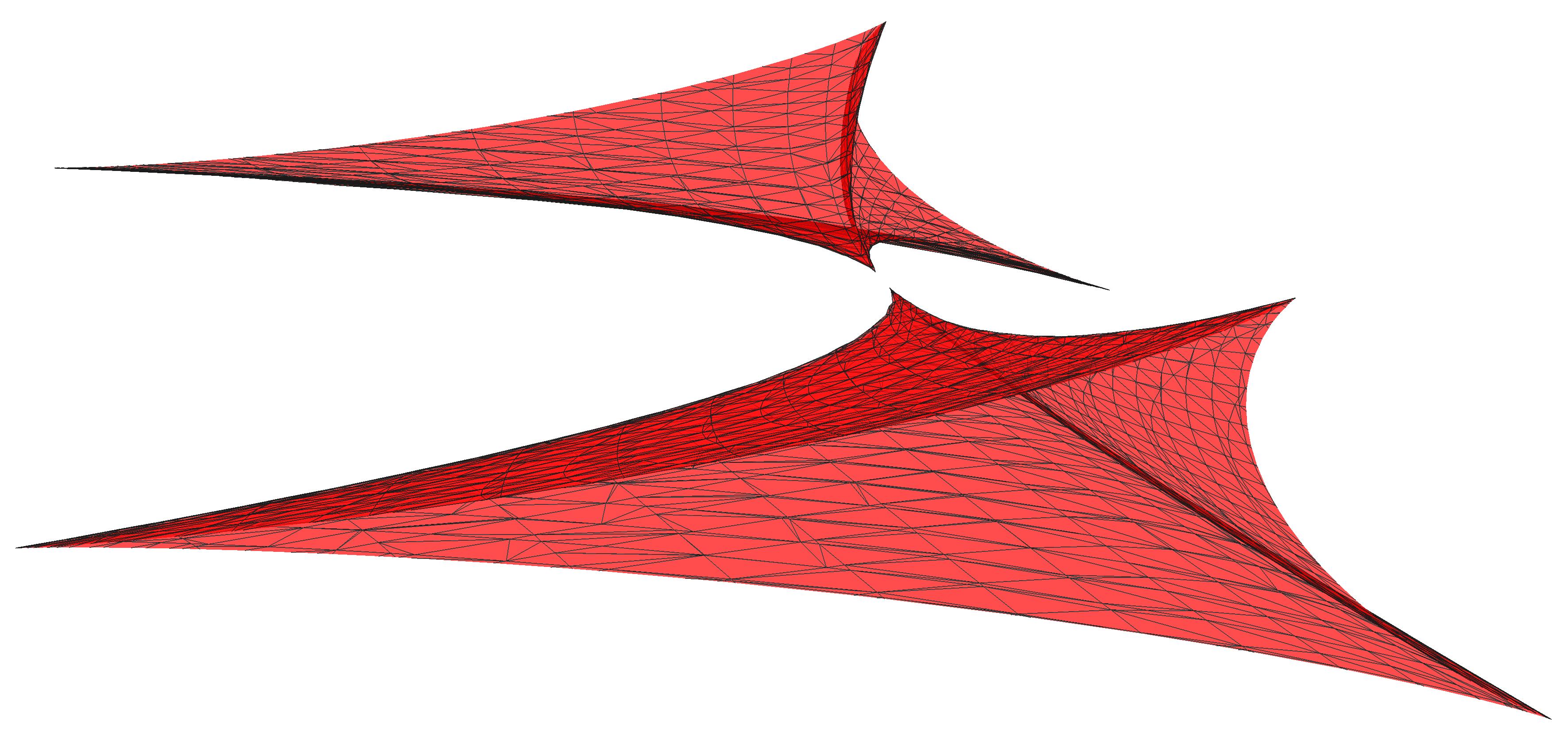}
\end{center}
\caption{Degeneracy of the geodesic exponential map near an approximately anti-podal point on a perturbed 3-dimensional ellipsoid. 
The plot to the left shows the position of the singularities in a parametrisation of the phase space, the plot in the centre the level bifurcation set (which is the conjugate locus) in the $Q_2$, $Q_3$, $Q_4$ coordinates  showing an elliptic umbilic bifurcation. The right plot is obtained from the middle plot by rotation allowing for a convenient rescaling of the axes. }\label{fig:ellipticumbilichypersphere}
\end{figure}

\section{Capturing periodic pitchfork bifurcations}\label{sec:capturePitchfork}

The minimal number of parameters in a family of problems such that a singularity is generic, i.e.\ unremovable under small perturbations, depends on the class of systems considered. For example, we have shown that a $D^{\pm}_4$ singularity occurs generically in Hamiltonian boundary value problems with 3 parameters.
In a boundary value problem for a flow map without any extra (e.g.\ symplectic) structure a $D^{\pm}_4$ singularity needs at least 4 parameters to become generic.
Restricting the class of systems further, e.g.\ to those with certain symmetries and/or integrals of motion, the count of required parameters can change. Here we consider a special singularity which occurs generically in 1-parameter families of completely integrable\footnote{A $2n$-dimensional Hamiltonian system is completely integrable if it possesses $n$ functionally independent, Poisson commuting integrals of motion.} Hamiltonian systems, e.g.\ planar, autonomous systems.


\subsection{Introduction and the mechanism of periodic pitchfork bifurcations}

\begin{definition}[symmetrically separated Lagrangian boundary value problem]
Let $(\phi,\Lambda,\Lambda')$ be a separated Lagrangian boundary value problem (definition \ref{def:LagDirproblem}). If $\Lambda = \Lambda'$ then the problem is referred to as a {\em symmetrically separated Lagrangian boundary value problem}.
\end{definition}

If $\phi$ is the time-$\tau$-map of a Hamiltonian system then
a motion is a solution if and only if it starts and ends after a fixed time $\tau$ on $\Lambda$. Homogeneous Dirichlet boundary conditions as in figure \ref{fig:BratuPhase} are instances of such boundary conditions.
As the authors prove in \cite[Thm. 3.2]{bifurHampaper}, a periodic pitchfork bifurcation (see figure \ref{fig:bifurpitchHam}) is a generic phenomenon in 1-parameter families of boundary value problems in completely integrable Hamiltonian systems with symmetrically separated Lagrangian boundary conditions.
The bifurcation occurs generically where $\Lambda$ touches a Liouville torus of the system filled with orbits of period $\tau$. The mechanism in a general, high dimensional integrable system is illustrated in figure \ref{fig:periodicpitchforkmechanism}. Figure \ref{fig:orbtstobifurpitchHam} shows the planar case where the Liouville tori are closed orbits.

\begin{figure}
\includegraphics[width=0.325\textwidth]{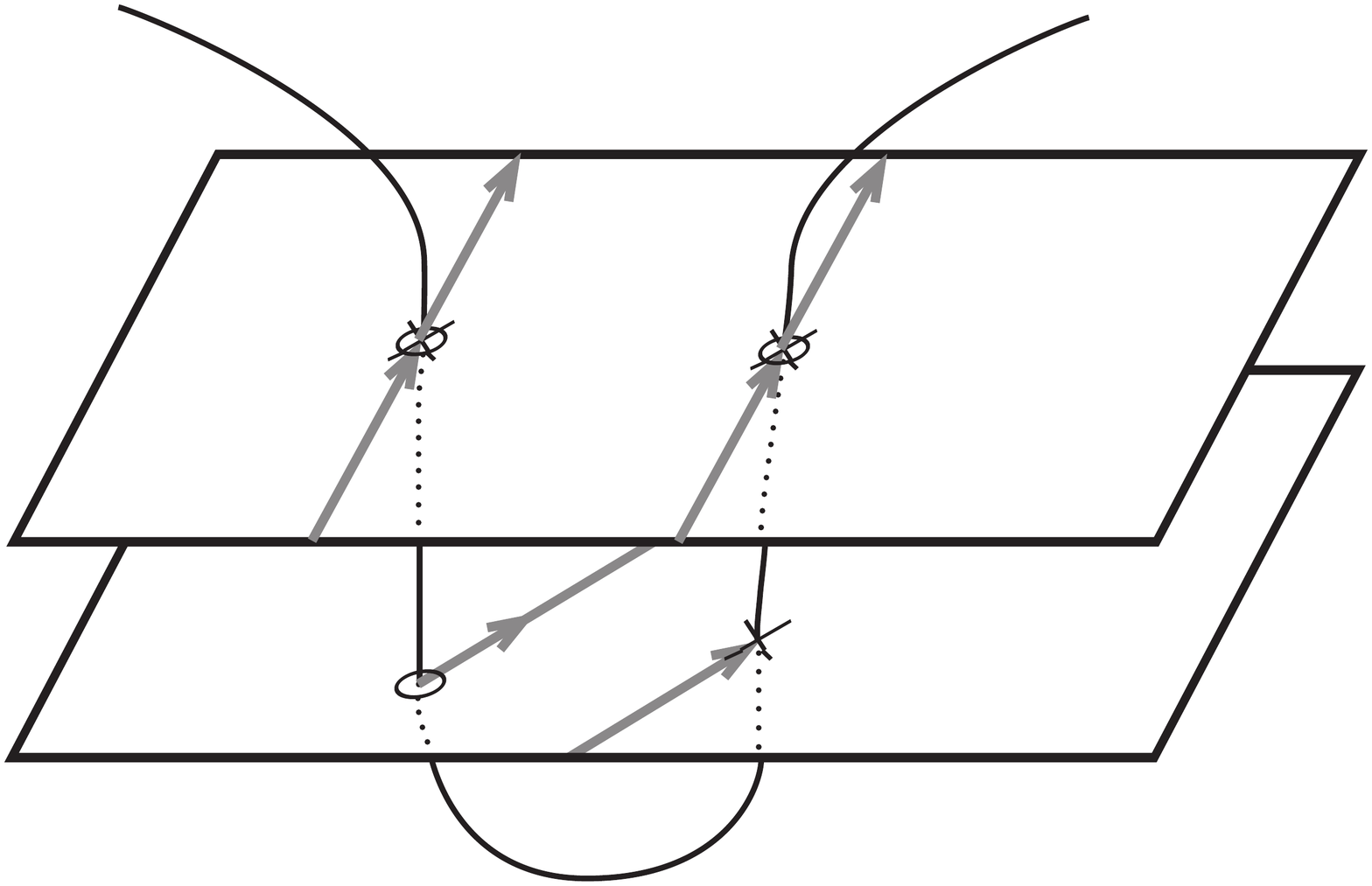}
\includegraphics[width=0.325\textwidth]{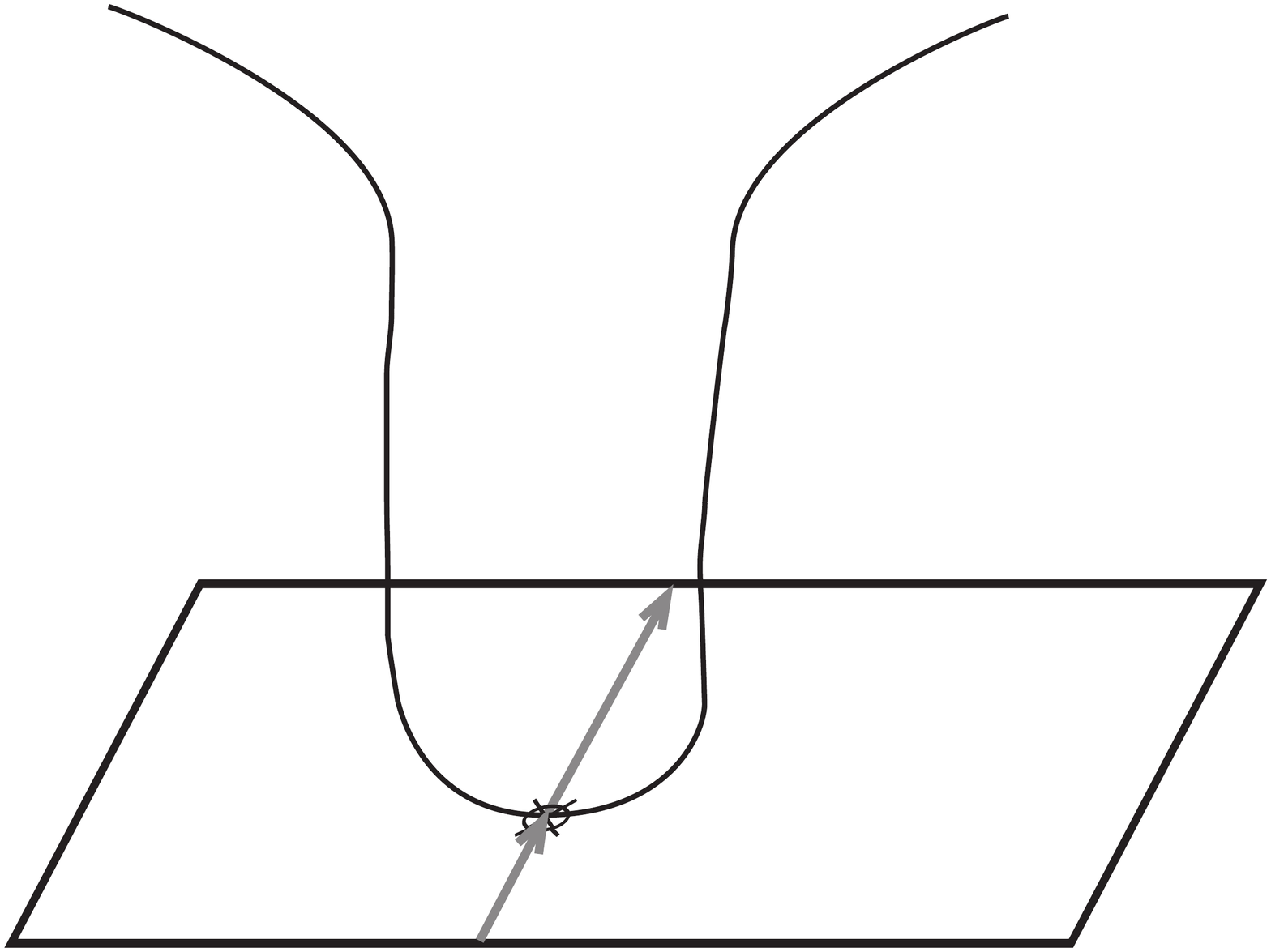}
\includegraphics[width=0.325\textwidth]{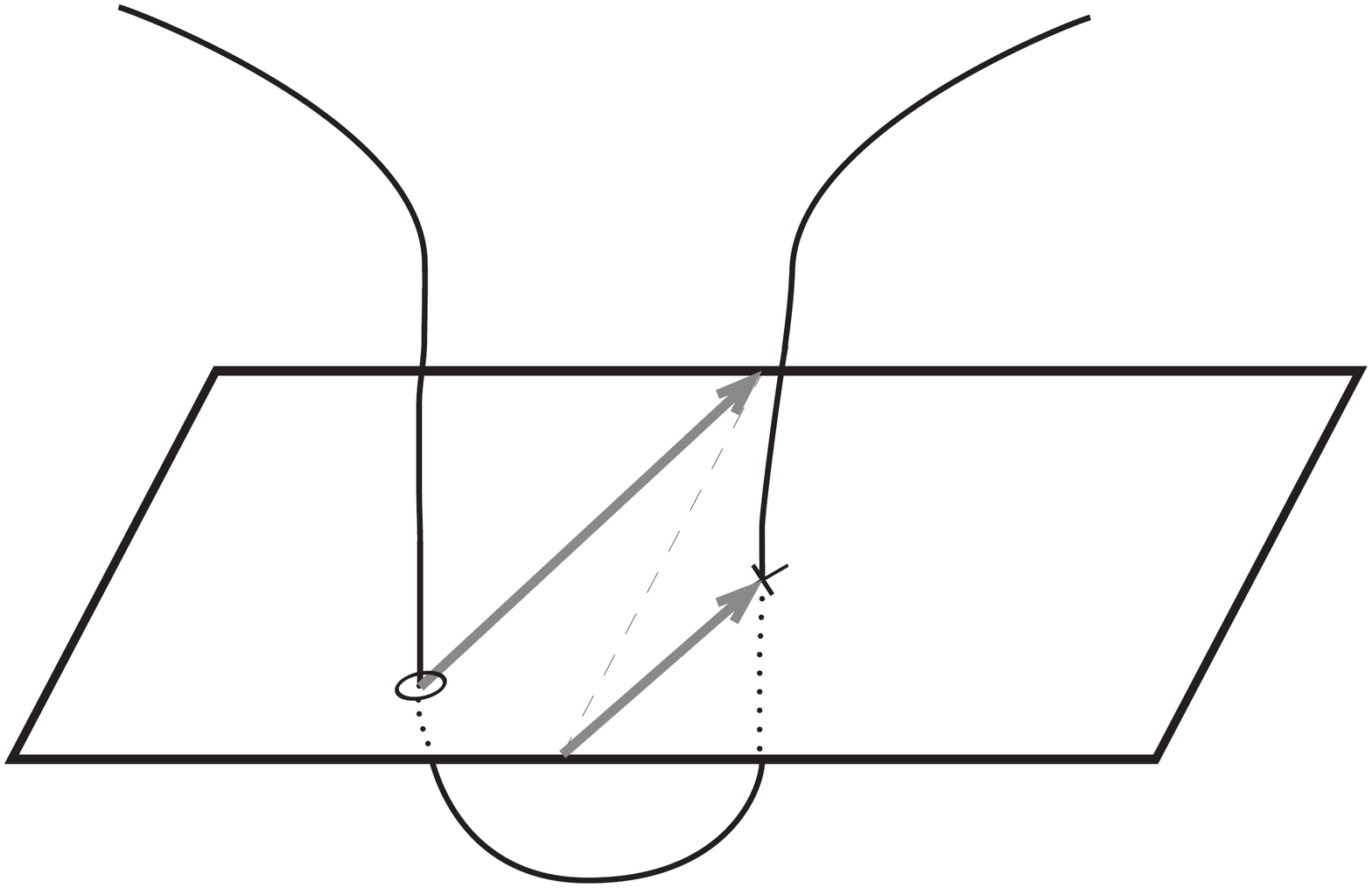}
\caption{Illustration of the mechanism of a periodic pitchfork bifurcation in a 4-dim system. Opposite edges of the parallelograms are identified illustrating Liouville tori. Arrows starting at points marked with $o$ and ending at $\times$ denote solution orbits of the boundary value problem.
Since two 2-dimensional manifolds in a 4-dimensional space generically intersect in isolated points, the boundary condition $\Lambda$ is represented as a curve in the illustrations.
Two periodic solutions of period $\tau$ lying on the same Liouville torus merge with a non-periodic solution exactly where the boundary condition is tangent to the Liouville torus with orbits of period $\tau$. Then a non-periodic solution persists. \cite{bifurHampaper}}\label{fig:periodicpitchforkmechanism}
\end{figure}


\begin{figure}
\begin{center}
\includegraphics[width=0.6\textwidth]{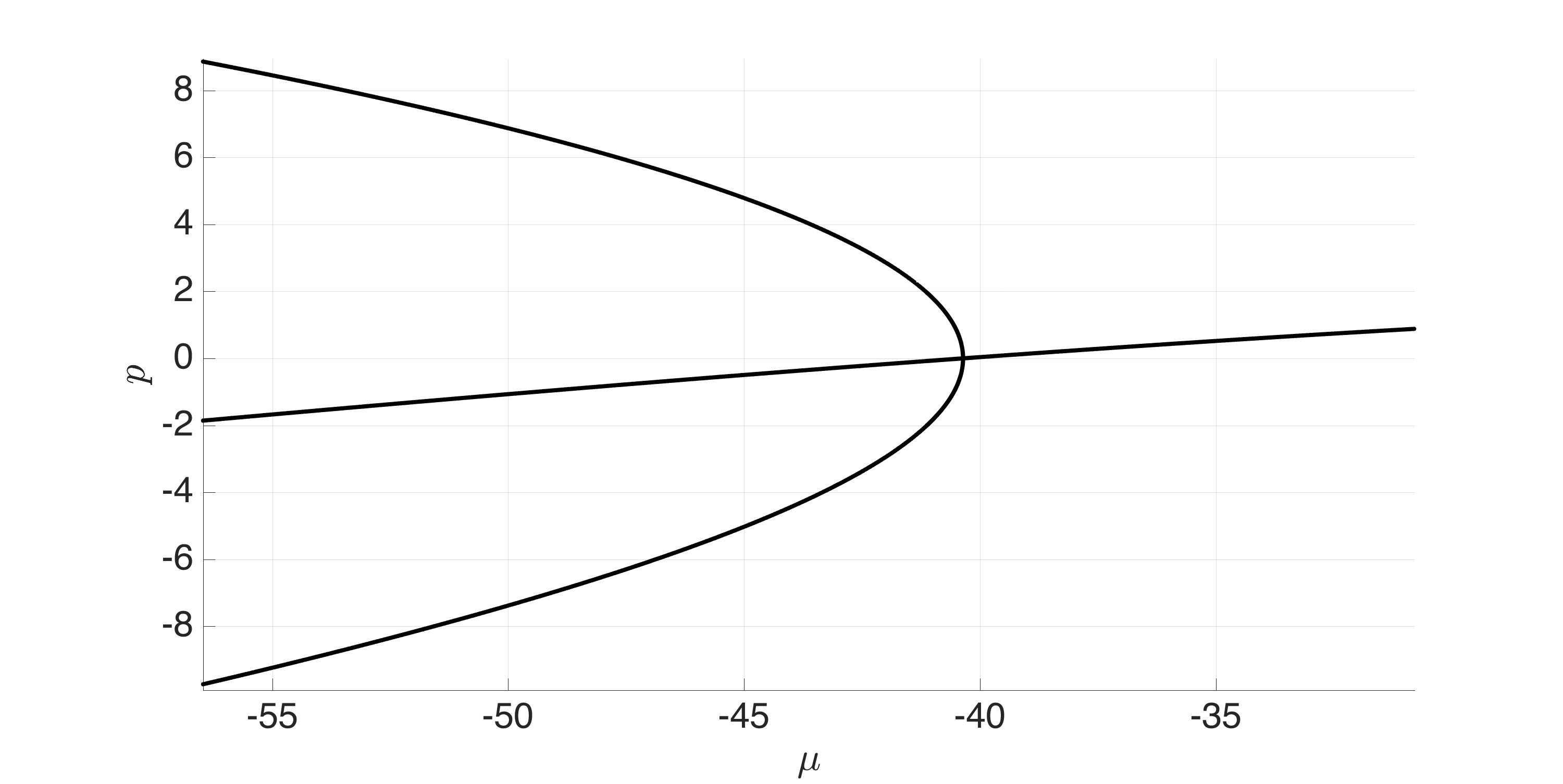}\\

\includegraphics[width=0.325\textwidth]{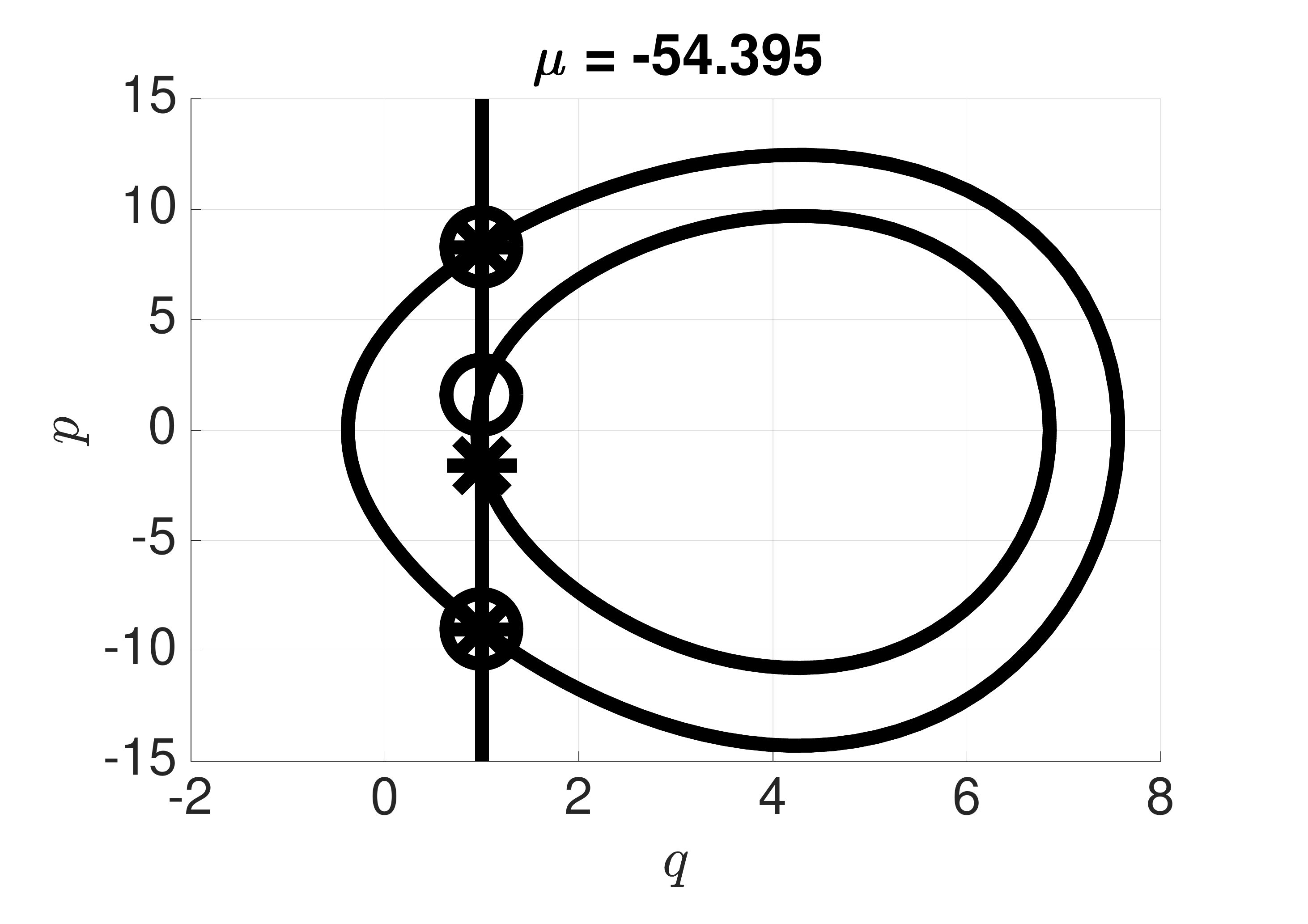}
\includegraphics[width=0.325\textwidth]{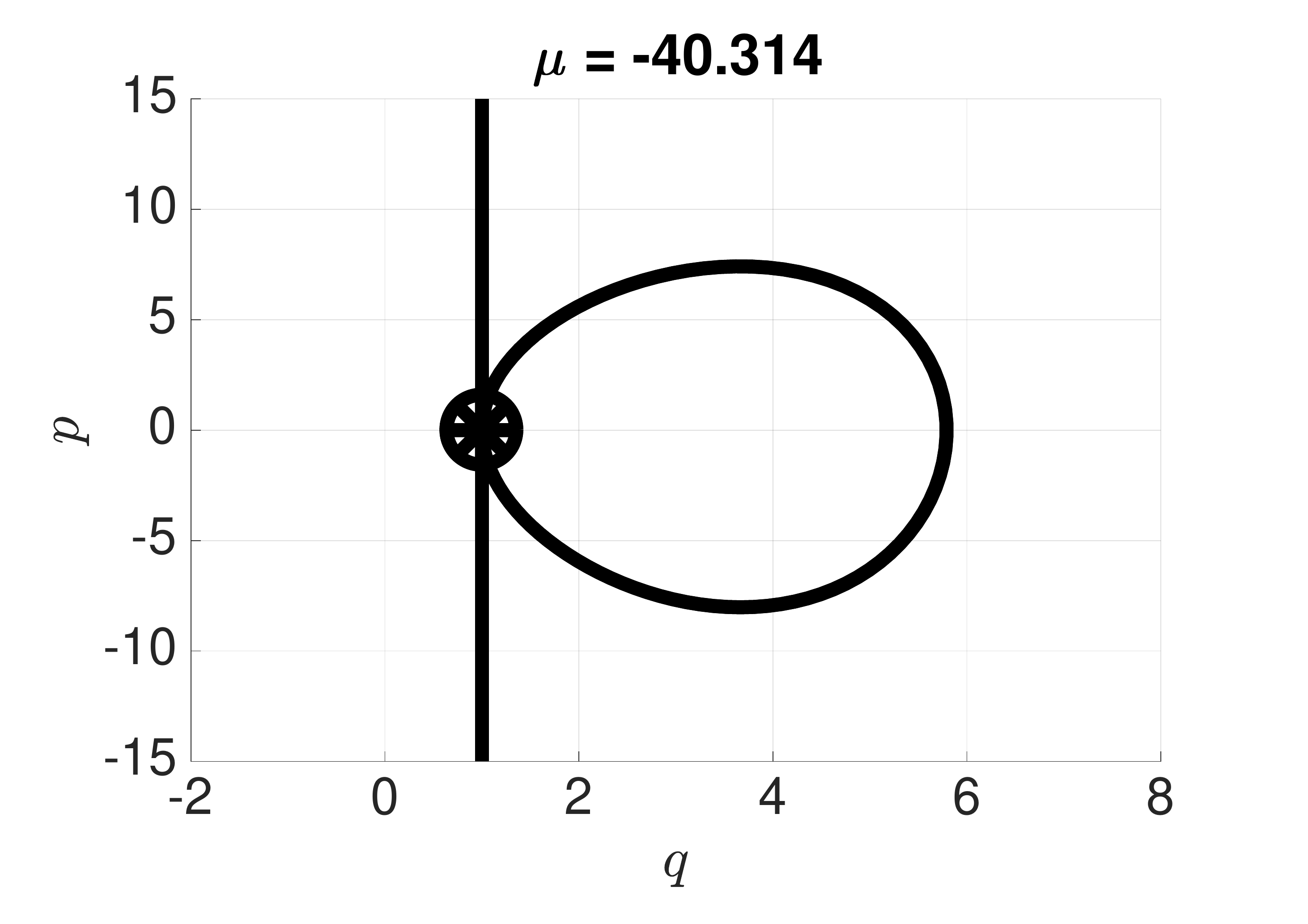}
\includegraphics[width=0.325\textwidth]{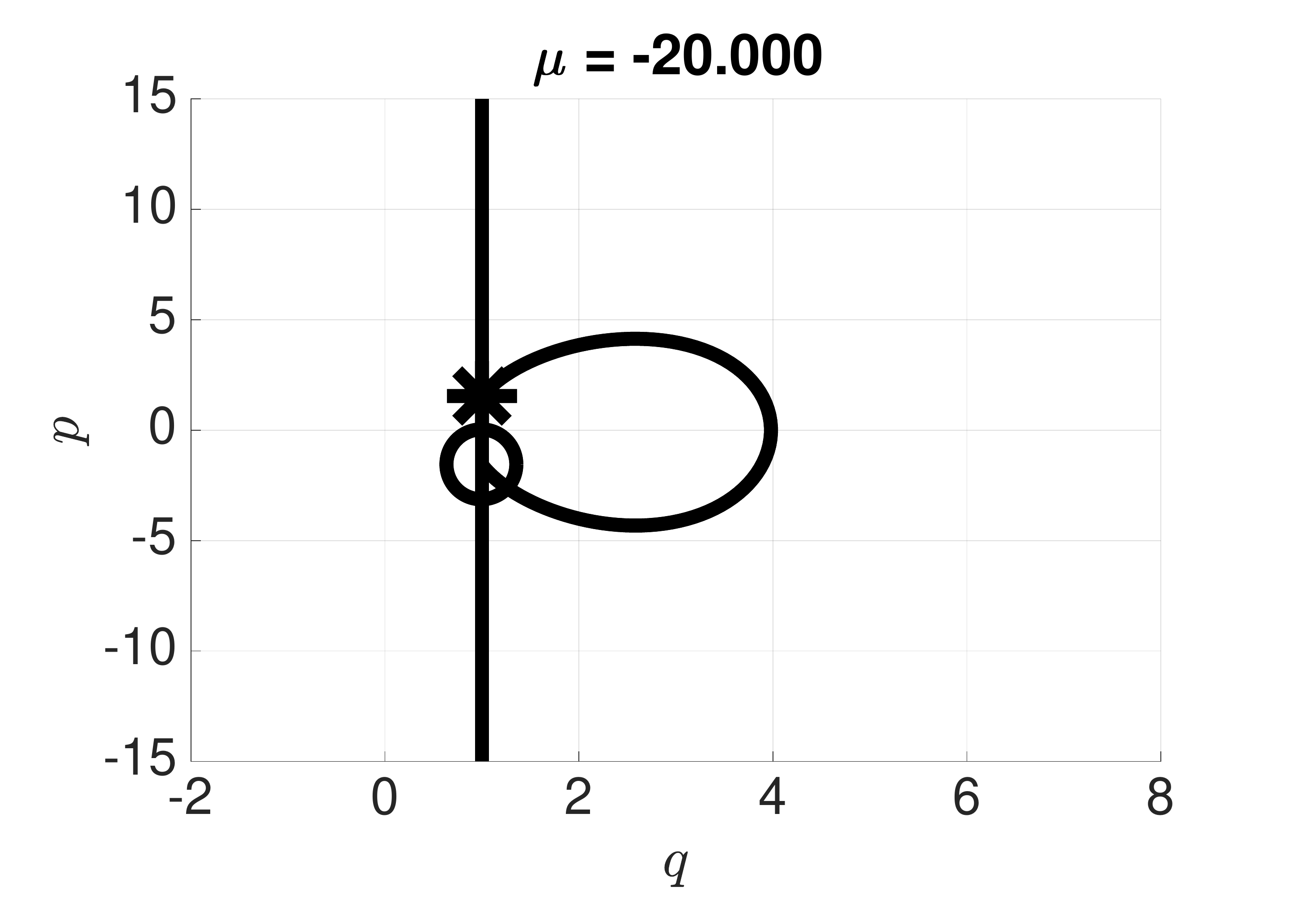}
\end{center}
\caption{Bifurcation diagram of the boundary value problem $q(0)=1=q(1)$ for the Hamiltonian system $H_\mu(q,p)=p^2+0.01p^3+q^3+\mu q$. The flow map is obtained using the symplectic St\"ormer-Verlet method and the boundary value problem is solved using a shooting method. Below the diagram the corresponding solution orbits are plotted for three parameter values. The marker $\ast$ denotes the start point of the orbit and $o$ the end point. Notice that the outer periodic orbit in the left figure constitutes two solutions to the boundary value problem. \cite[figure 3]{bifurHampaper}
}\label{fig:orbtstobifurpitchHam}\label{fig:bifurpitchHam}
\end{figure}

In the remainder of the section we will analyse how symplectic integrators can be useful to capture periodic pitchfork bifurcations. We will
\begin{itemize}
\item
show numerical examples comparing non-symplectic and symplectic methods in generic planar systems
and
describe the mechanism of the bifurcation in the numerical phase space obtained by a symplectic method in planar systems (section \ref{subsec:NumericalPhasePlotPitchfork}),
\item
construct an analytic example of a periodic pitchfork bifurcation in a non-trivial, 4-dimensional completely integrable system and run a numerical experiment studying how it is captured when using a symplectic integrator (section \ref{subsec:modelnontrivialsym}),
\item
run further numerical experiments on how periodic pitchfork bifurcations are captured in higher dimensional systems with affine-linear symmetries (section \ref{subsec:SymmetricCapture}),
\item
give theoretical reasoning for the observed behaviour and explain a general setting (section \ref{subsec:TheoCapture}).
\end{itemize}


\subsection{Periodic pitchfork in planar Hamiltonian systems -- quality of preservation and numerical phase plots for a symplectic integrator}\label{subsec:NumericalPhasePlotPitchfork}

Consider a $\mu$-parameter family of Hamiltonian systems with phase space $T^\ast\R$ equipped with the symplectic structure $\d q \wedge \d p$ and Hamiltonians
\[
H_\mu(q,p) = p^2 + 0.1p^3 - 0.01 \cos(p)  + q^3-0.01q^2+\mu q.
\]
Let $Q_\mu$ denote the $q$ component of the Hamiltonian flow to $H_\mu$ at time $\tau=1.7$. We consider the Dirichlet-type boundary value problem 
\[Q_\mu(0.2,p)=0.2.\] 
A motion starting at the line $q=0.2$ in the phase space is a solution to the boundary value problem if and only if it returns to the line after time $\tau=1.7$.
Figure \ref{fig:pitchfork2d} shows how a pitchfork bifurcation in a Dirichlet problem for a generic, 1-parameter family of planar Hamiltonian systems is captured by the symplectic St\"ormer-Verlet  method with 14 and 28 steps. The breaking in the bifurcation for 28 steps is visible in a close-up of the bifurcation diagram. Notice the different scaling of the axes in the plots. We see that only few time-steps are needed to capture the bifurcation very well. The strong improvement of the shape of the pitchfork by doubling the number of steps indicates a convergence to the correct shape which is better than polynomial. Indeed, exponential convergence will be proved in proposition \ref{prop:cappitch}.


\begin{figure}
\begin{center}
\includegraphics[width=0.4\textwidth]{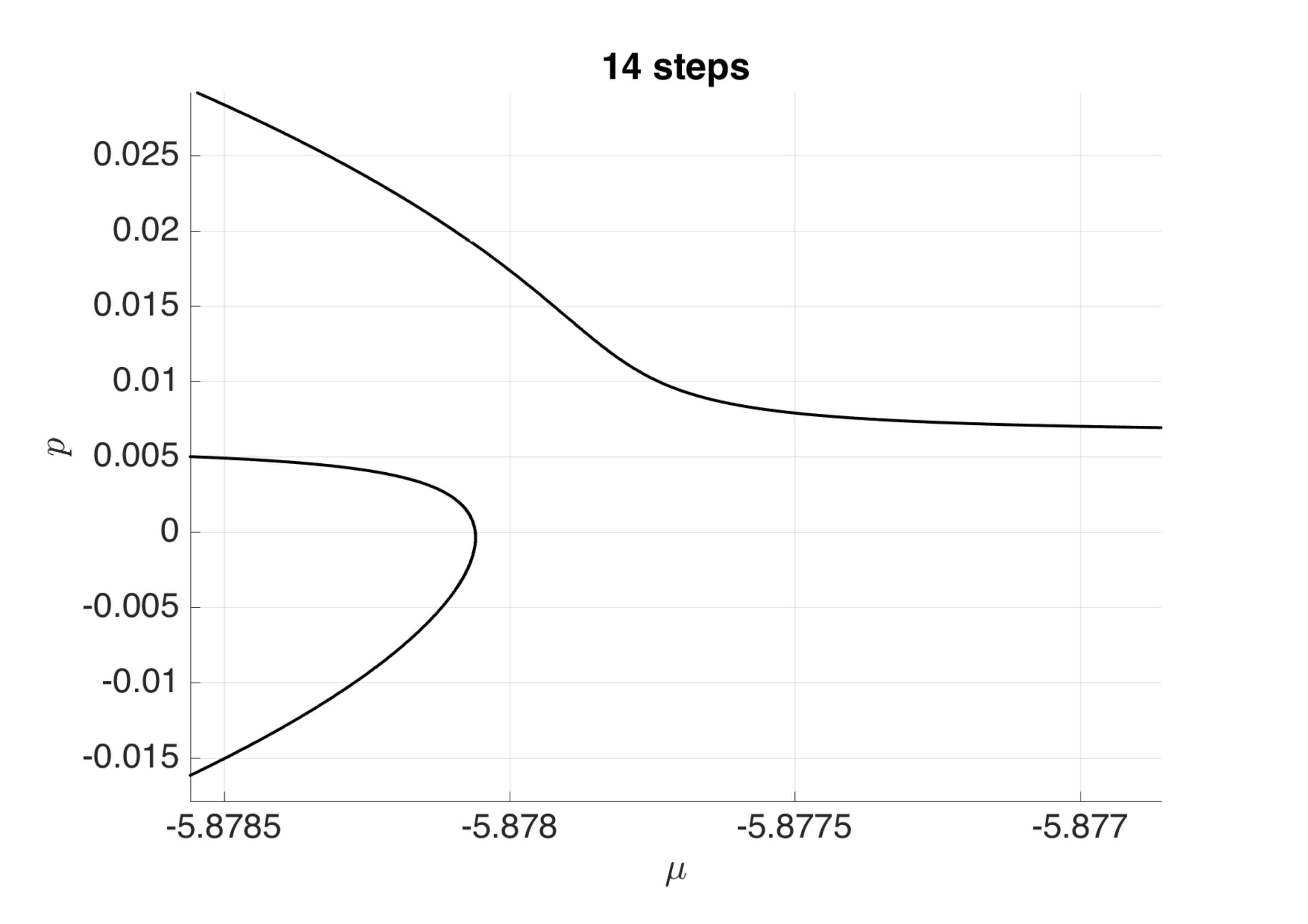}
\quad
\includegraphics[width=0.4\textwidth]{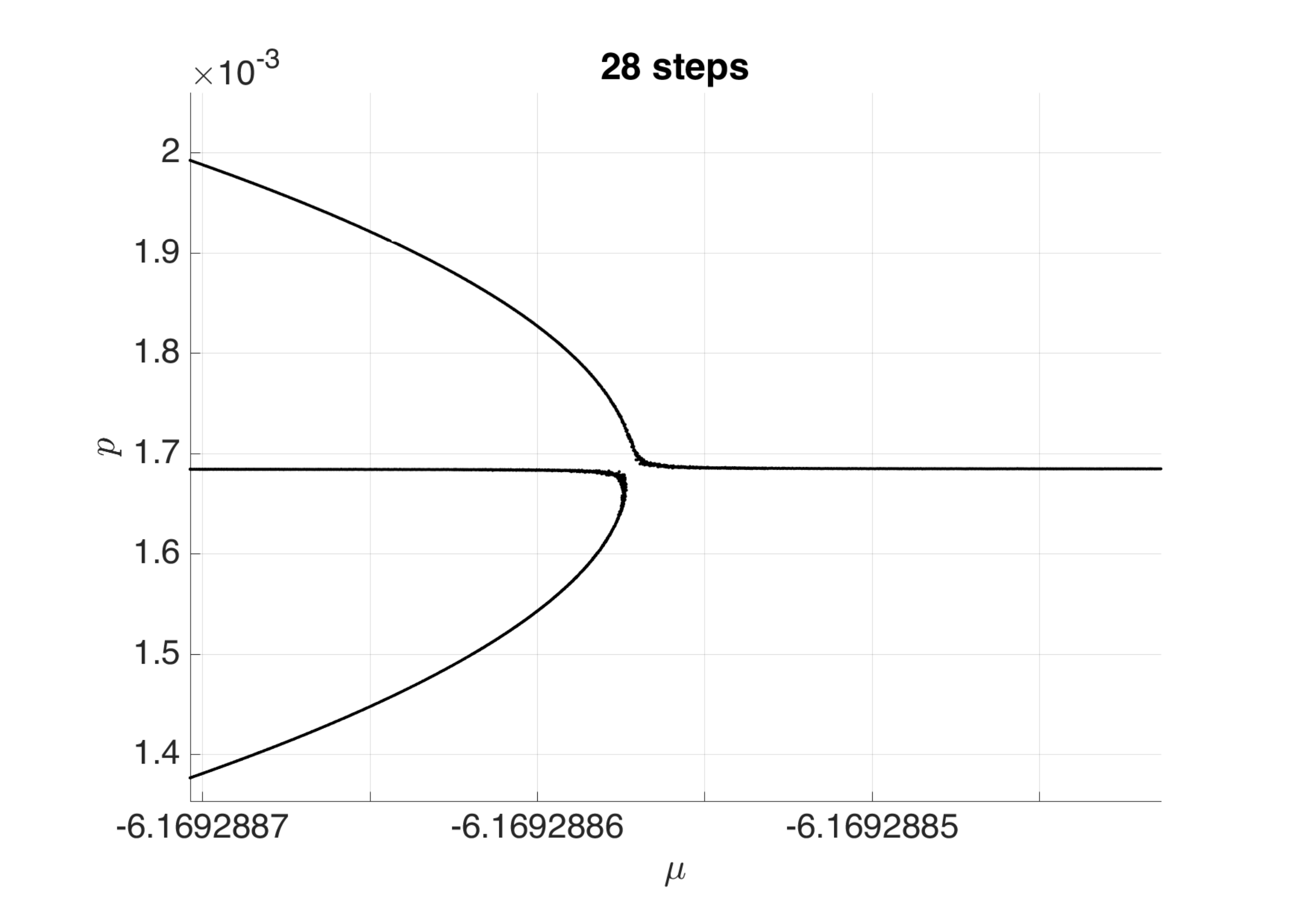}
\end{center}
\caption{Plot of bifurcation diagrams of Hamiltonian boundary value problem solved with the symplectic St\"ormer-Verlet method using 14 steps and 28 steps. Notice the different scaling of the axes.}\label{fig:pitchfork2d}
\end{figure}
%
For the matter of visualisation of the mechanism in the phase space, we reduce the number of time-steps to 11. The bifurcation diagram is displayed in figure \ref{fig:LP11Pitchfork}. For small parameter values there are three solutions. As $\mu$ increases two of them merge in a fold bifurcation.
\begin{figure}
\begin{center}
\includegraphics[width=0.4\textwidth]{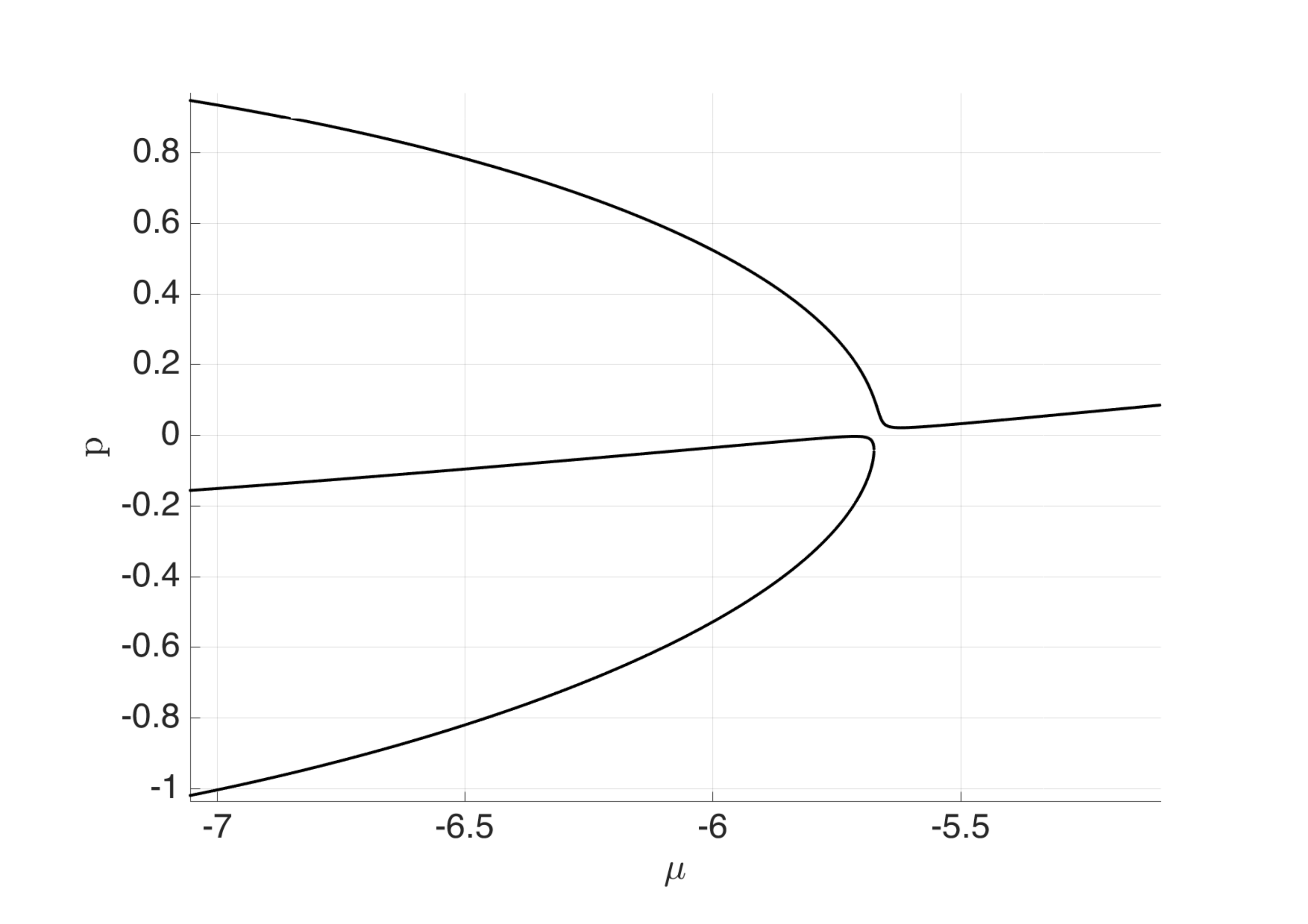}
\end{center}
\caption{Plot of bifurcation diagrams of Hamiltonian boundary value problem solved with the symplectic St\"ormer-Verlet method using 11 steps.}\label{fig:LP11Pitchfork}
\end{figure}
%
%
%
%
%
%
%
\begin{figure}
\begin{center}
\includegraphics[width=0.32\textwidth]{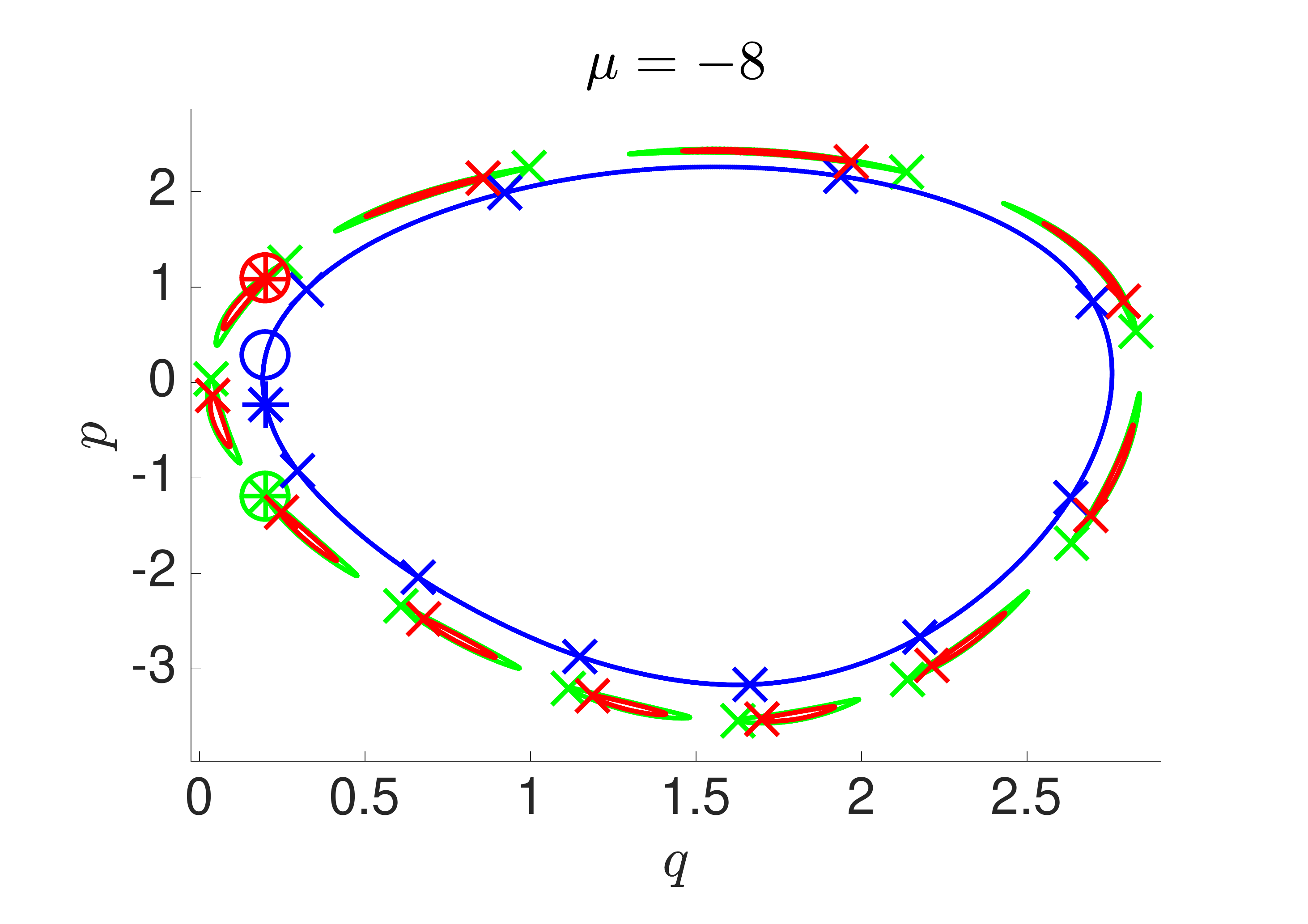}
\includegraphics[width=0.32\textwidth]{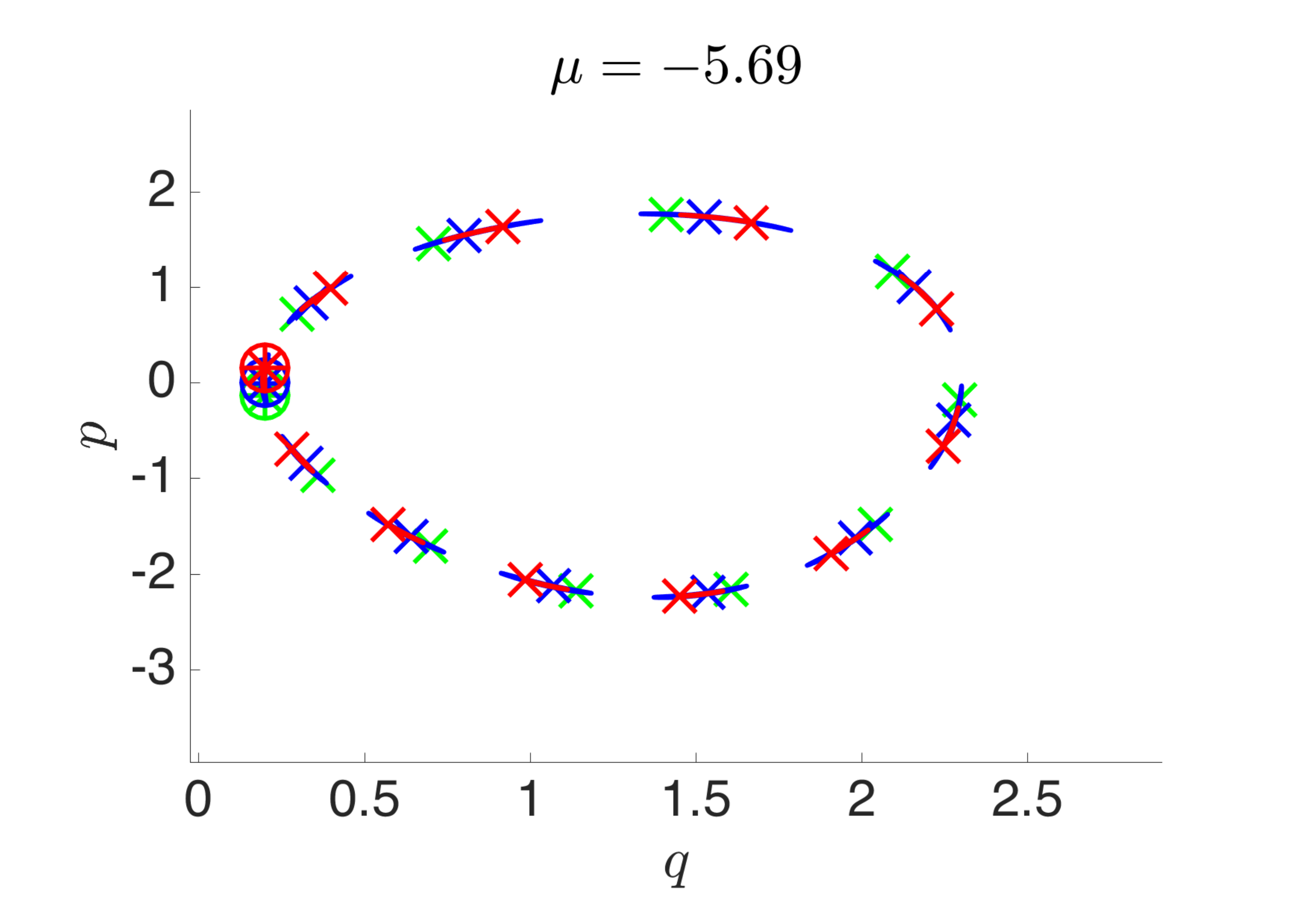}
\includegraphics[width=0.32\textwidth]{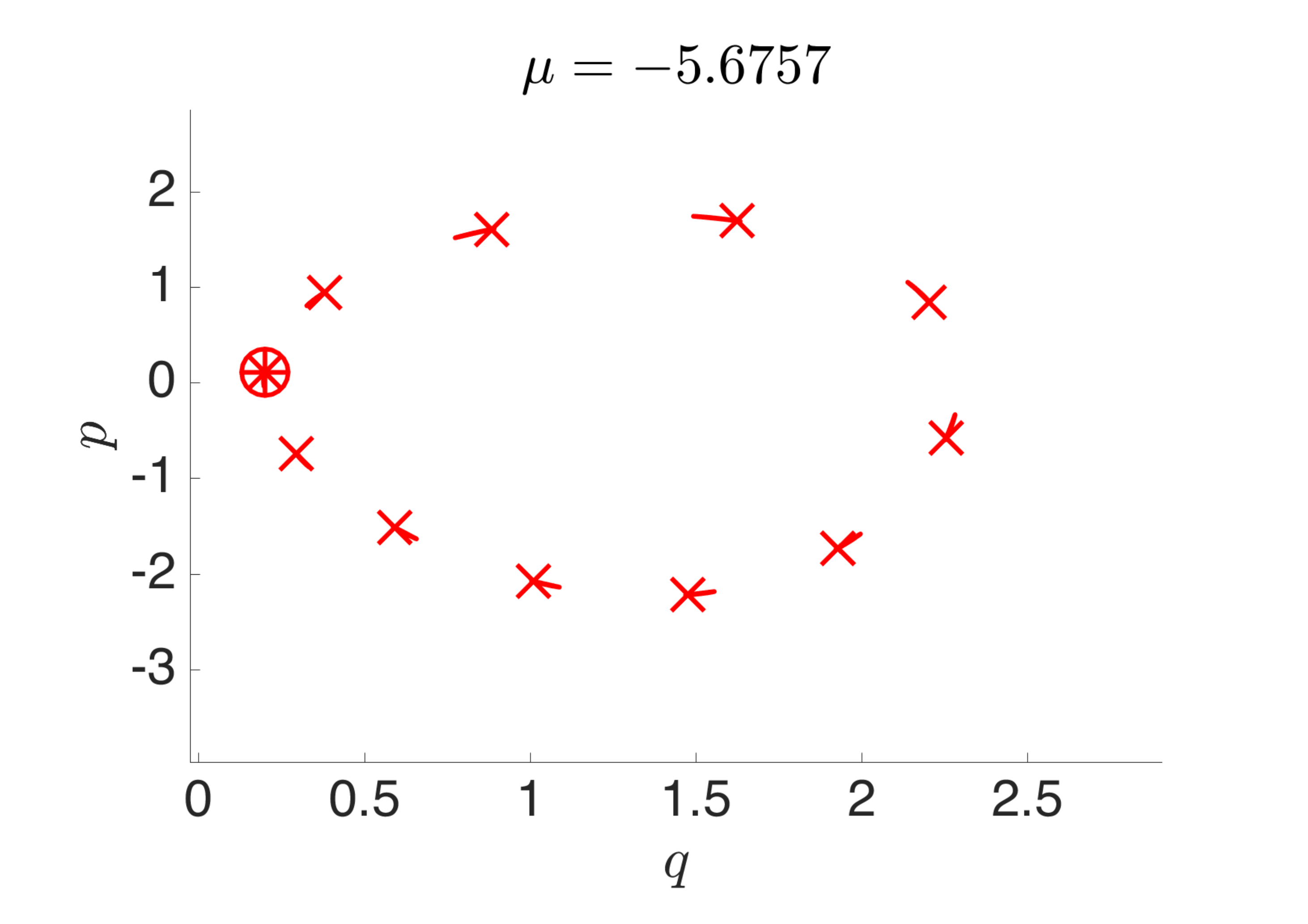}
\includegraphics[width=0.32\textwidth]{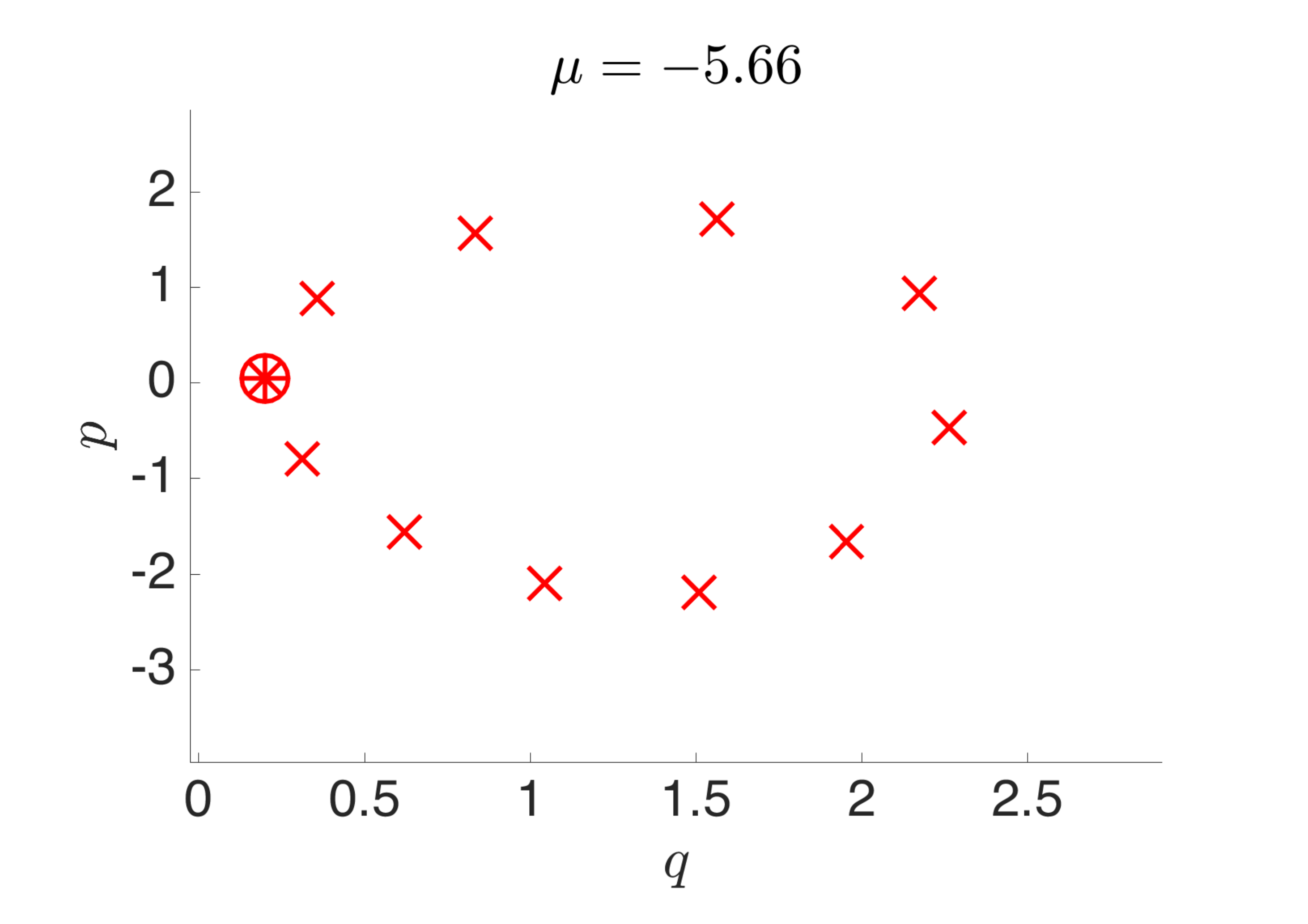}
\includegraphics[width=0.32\textwidth]{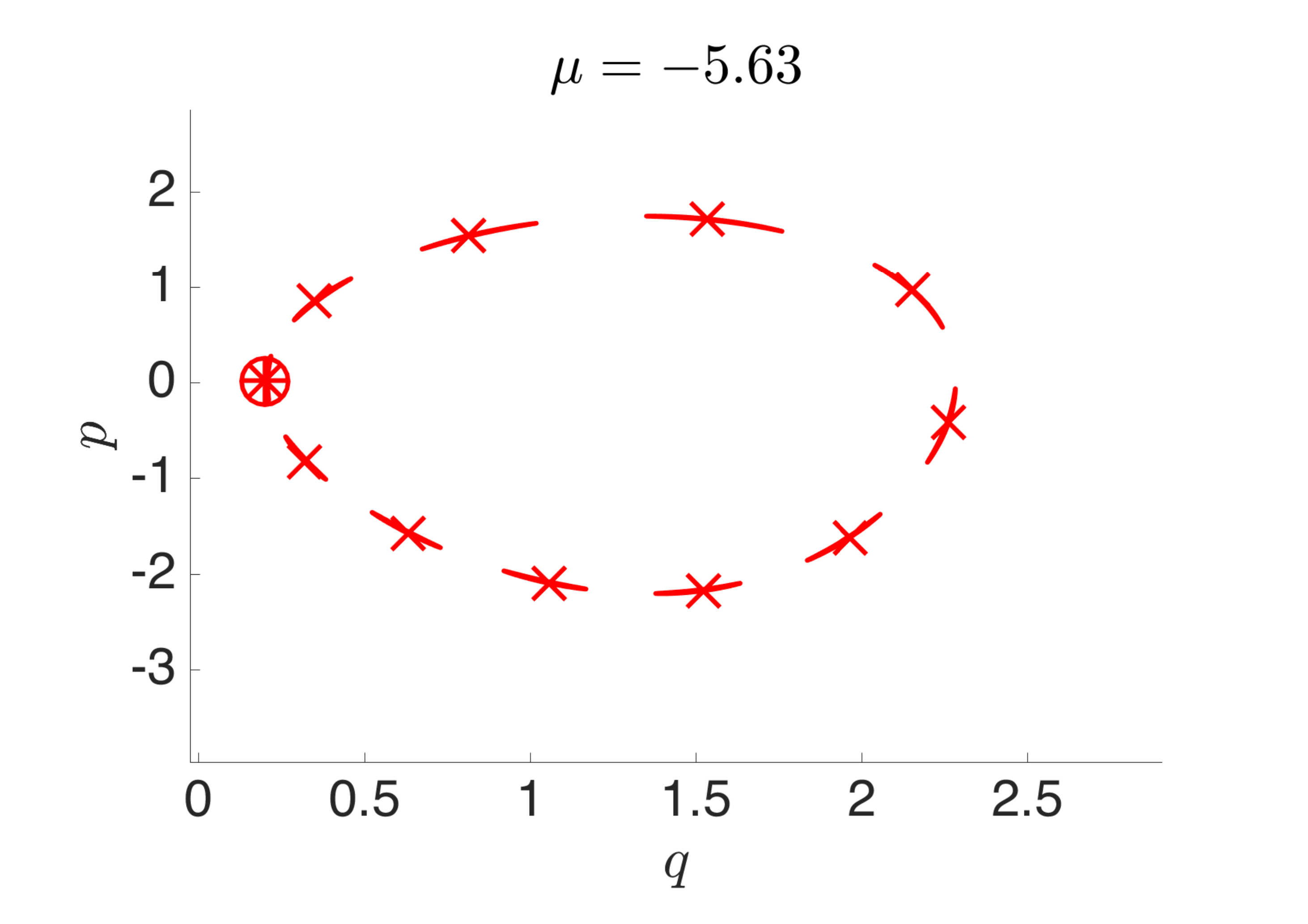}
\includegraphics[width=0.32\textwidth]{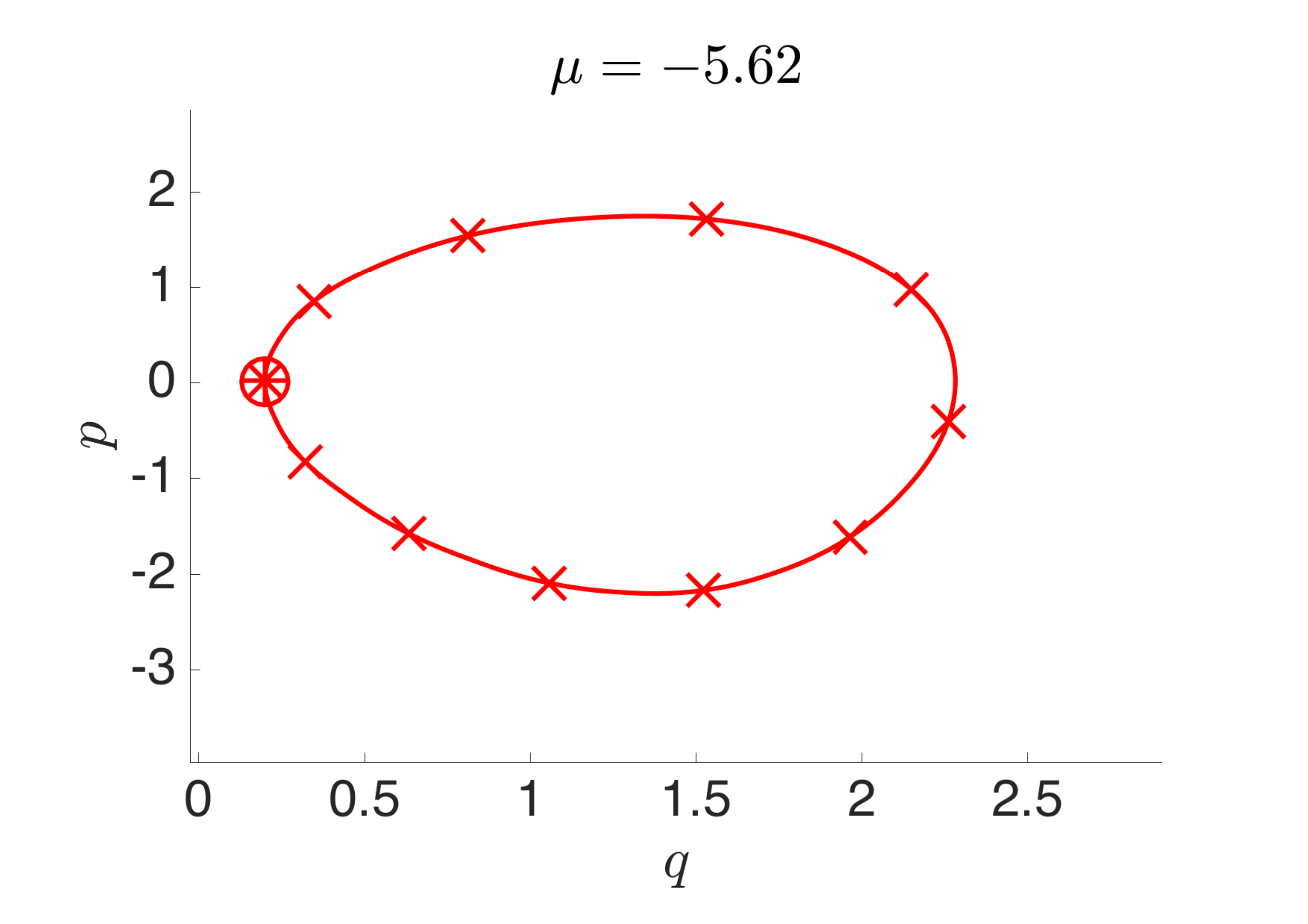}
\end{center}
\caption{Plot of orbits of the numerical flow involved in the bifurcation of figure \ref{fig:LP11Pitchfork}.}\label{fig:pitchfork2dmodphase}
\end{figure}
Figure \ref{fig:pitchfork2dmodphase} illustrates the mechanism of the broken pitchfork bifurcation in the phase space.
A solution to the boundary value problem is represented by 11 points (number of time-steps) in the phase space.  In the plots the initial point is marked by $\ast$, the end point by $o$ and the other points by $\times$. As required by the boundary condition, the $q$-coordinate of the initial- and end point is $0.2$.
The 11 points belong to an invariant set which can be calculated by iterating the discrete flow map $\phi_h$ with time-step $h=\tau/11$.
The solutions corresponding to the outer branches of the broken pitchfork bifurcation constitute nearly periodic orbits of $\phi_h$. Their invariant sets consist of 11 KAM-islands.
The inner branch corresponds to a non-periodic solution belonging to a connected invariant set.
As the parameter $\mu$ increases, the invariant set of the inner branch breaks up into 11 islands which merge with the invariant set of one of the nearly periodic solutions from the outer branches.
We see a fold bifurcation in the bifurcation diagram of the numerical flow.
The remaining outer branch is continued as $\mu$ increases.
It becomes a periodic orbit shortly after the fold bifurcation and then loses periodicity again.
This mechanism can be compared to the pitchfork bifurcation shown in figure \ref{fig:orbtstobifurpitchHam}. There the outer branches correspond to the same periodic orbit in the phase space and the pitchfork bifurcation takes place exactly when the orbit corresponding to the inner branch becomes a periodic orbit of period $\tau$ that is tangent to the boundary condition.

Let us compare the preservation of periodic pitchfork bifurcations using a symplectic integrator (figure \ref{fig:pitchfork2d} and \ref{fig:LP11Pitchfork}) with a non-symplectic integrator of the same order of accuracy. To understand what is happening in the latter case we compute a larger part of the bifurcation diagram using the explicit midpoint rule, which is a non-symplectic second order Runge-Kutta method (RK2).
The upper and middle branch of the pitchfork bifurcation do not exist in the numerical bifurcation diagram until we use more than 25 steps (figure \ref{fig:pitchforkRK}).
With 200 steps the bifurcation is recognisable and with 400 steps we obtain a diagram comparable with the 14-steps St\"ormer-Verlet calculation in figure \ref{fig:pitchfork2d}. As the computational costs per step are similar if the Hamiltonian is separated, we conclude that the symplectic St\"ormer-Verlet method performs significantly better then the non-symplectic method RK2. 

\begin{figure}
\begin{center}
\includegraphics[width=0.32\textwidth]{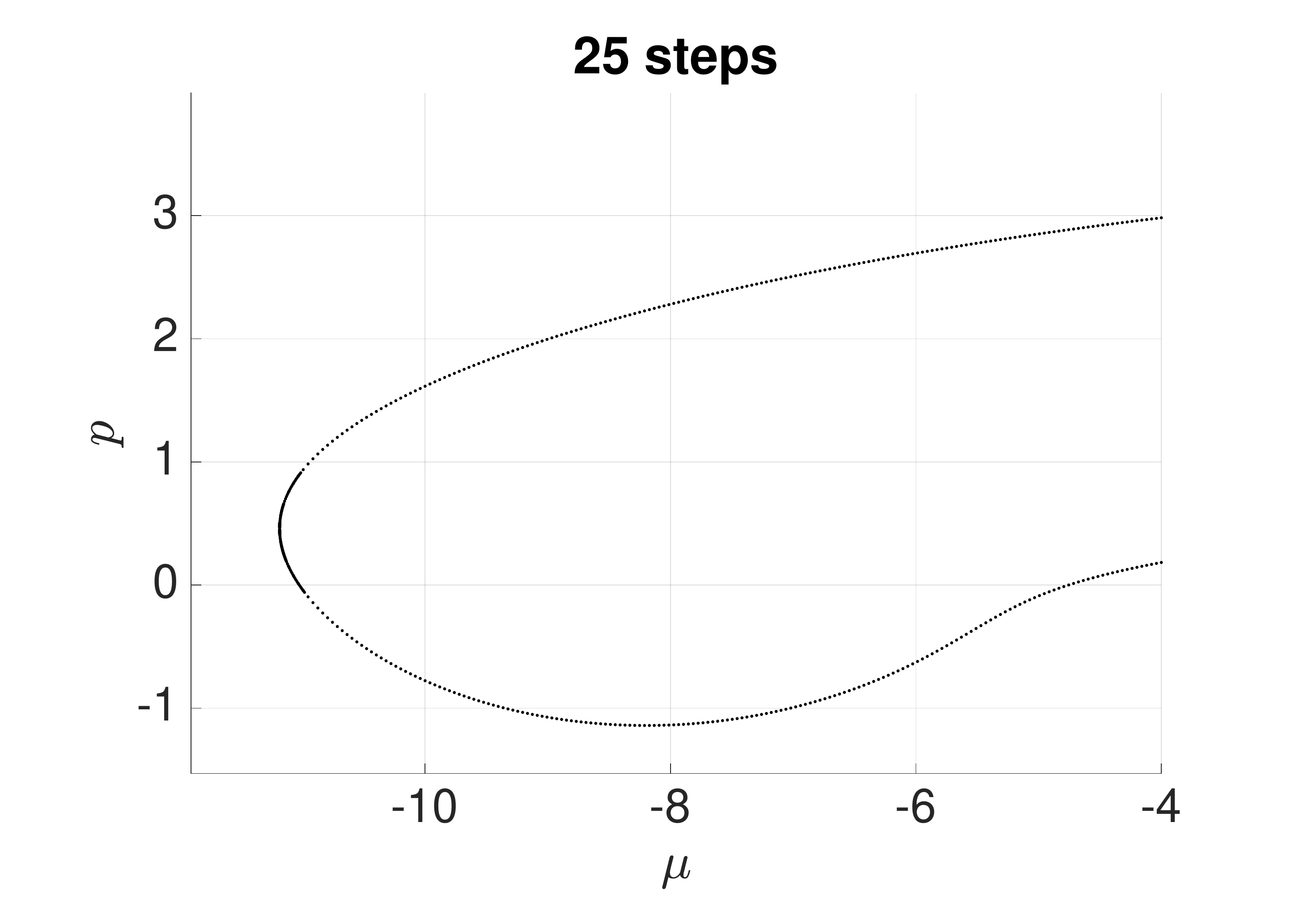}
\includegraphics[width=0.32\textwidth]{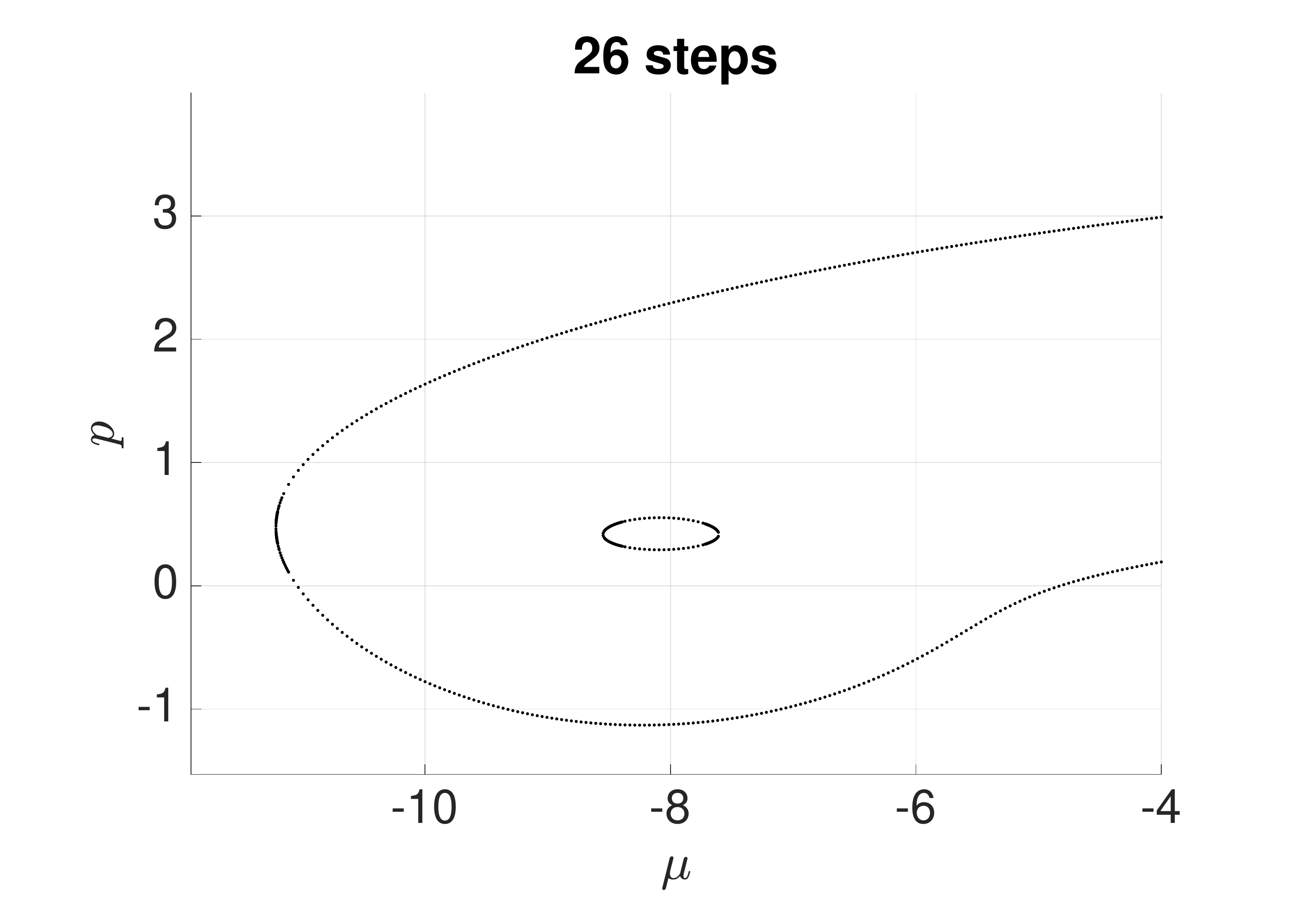}
\includegraphics[width=0.32\textwidth]{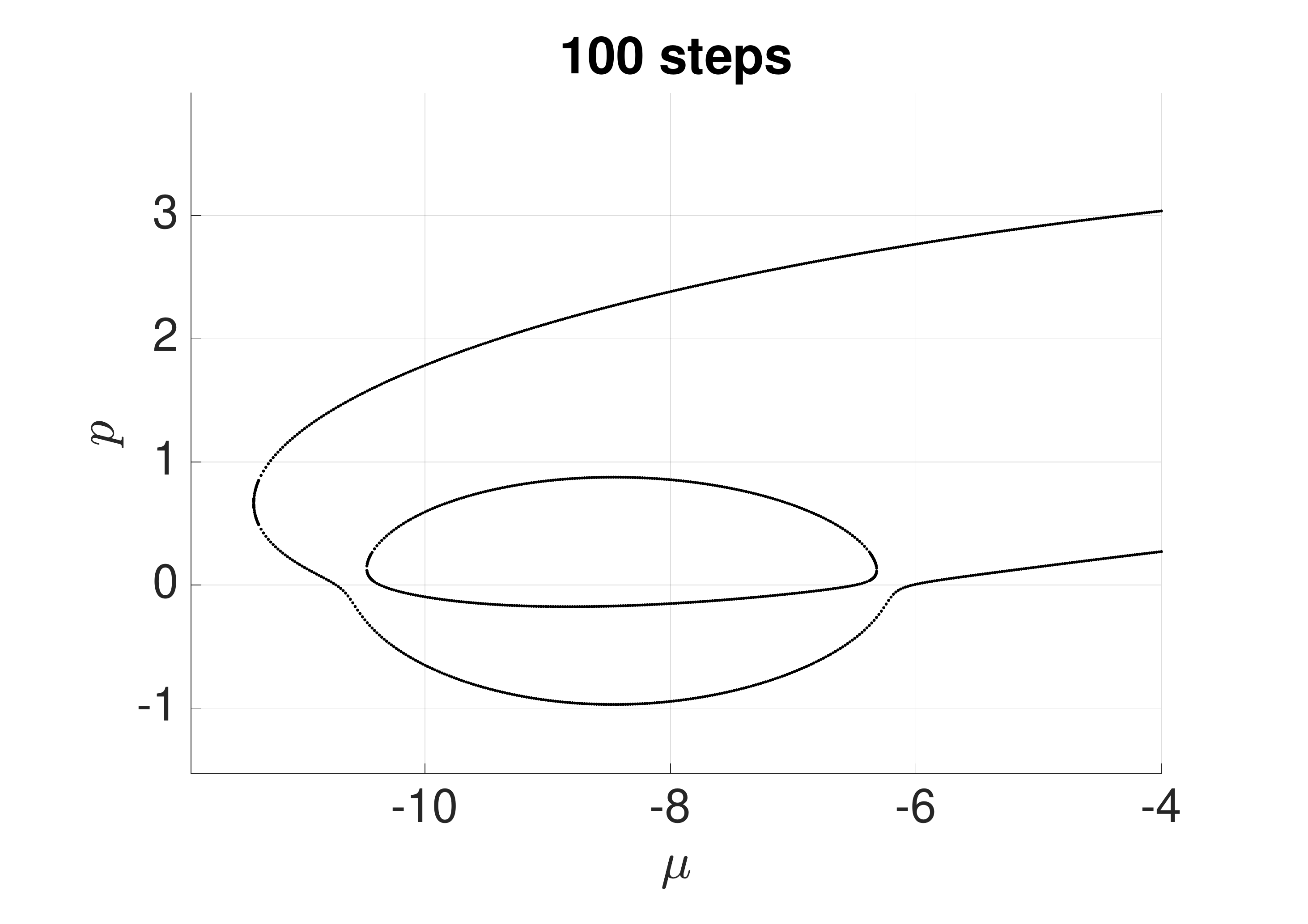}
\includegraphics[width=0.32\textwidth]{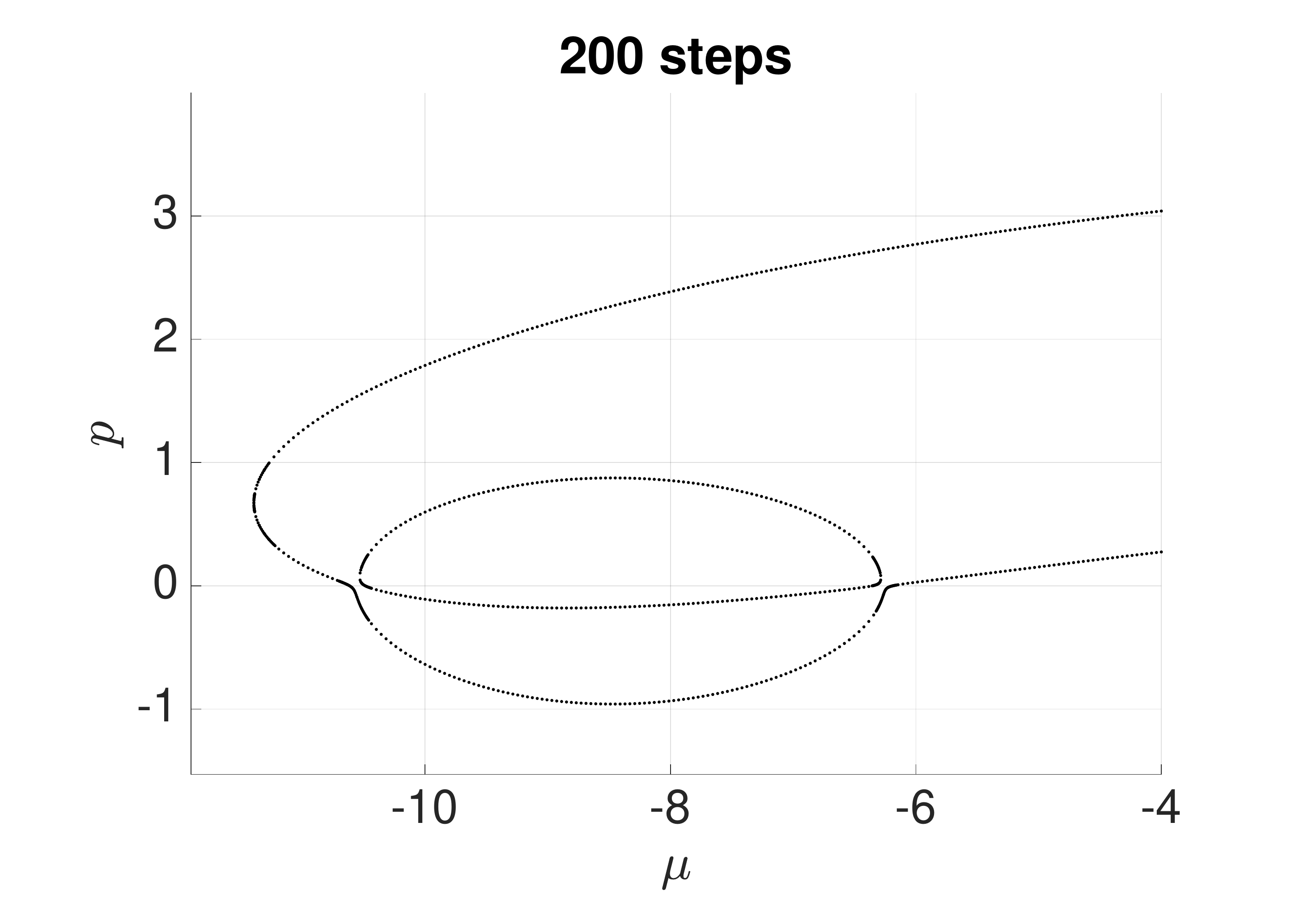}
\includegraphics[width=0.32\textwidth]{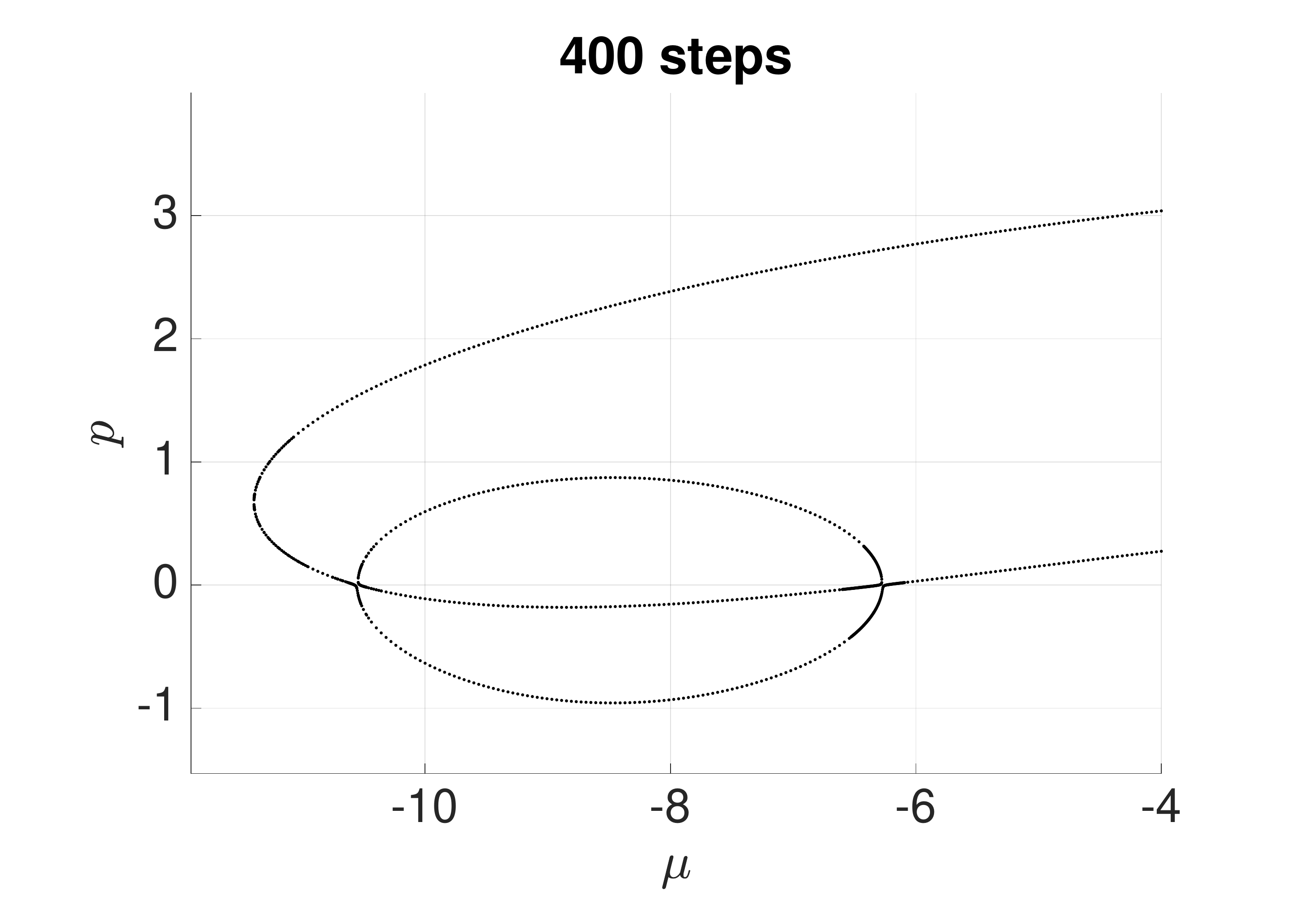}
\includegraphics[width=0.32\textwidth]{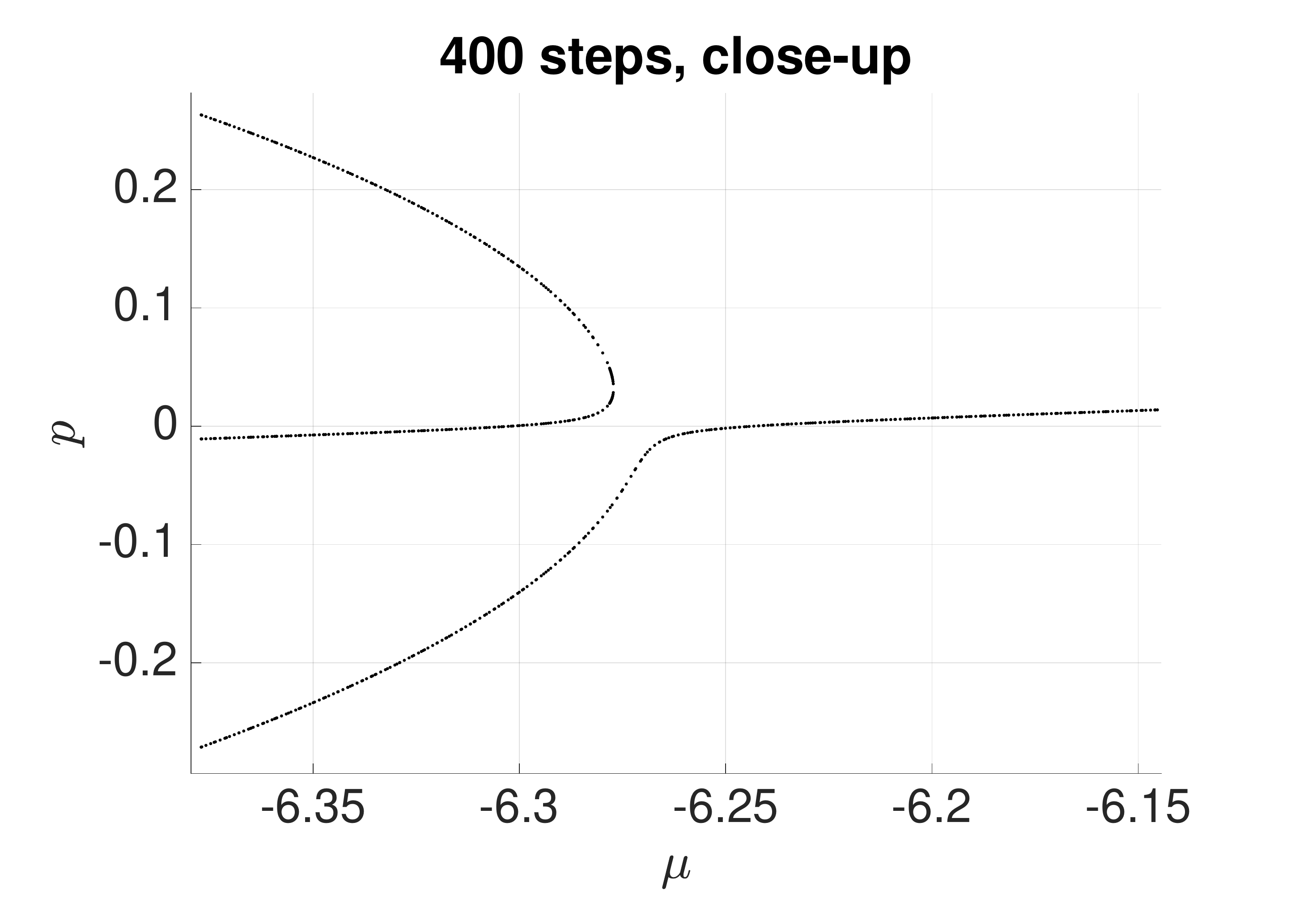}
\end{center}
\caption{Plot of bifurcation diagrams of Hamiltonian boundary value problem solved with RK2 and different number of integration steps.}\label{fig:pitchforkRK}
\end{figure}

\subsection{Construction of a periodic pitchfork in a non-trivial 4-dimensional Hamiltonian system}\label{subsec:modelnontrivialsym}
Let us construct a non-trivial, 4-dimensional completely integrable Hamiltonian system with a periodic pitchfork bifurcation that is not removable under small perturbations within the class of symmetrically separated Lagrangian boundary value problems for completely integrable Hamiltonian systems.

The circle $S^1$ can be viewed as the quotient space $\R / [0,2\pi]$. Local coordinates on $S^1$ can be obtained as the inverse of suitable restrictions of the projection map $\R \to \R / [0,2\pi] = S^1$. In this way, we obtain everywhere local coordinates $(q_1,q_2)$ for the torus $S^1 \times S^1$. Let us denote a copy of the torus $S^1 \times S^1$ by $\overline S^1 \times \overline S^1$ and its coordinates obtained as local inverses of the projection $\R \to \R / [0,2\pi]$ by $(\overline q_1, \overline q_2)$. Let $\e,\kappa$ be non-zero values in the real, open interval $(-1,1)$. The map
\begin{equation}\label{eq:changecoordsS1S1}
\begin{pmatrix}q_1,q_2\end{pmatrix} \mapsto \begin{pmatrix}
q_1+\e \cos(q_2),
q_2+\kappa \cos(q_1)
\end{pmatrix}
\end{equation}
gives rise to a diffeomorphism $h \colon S^1 \times S^1 \to \overline S^1 \times \overline S^1$.
Its cotangent lifted map $\Psi$ is given as
\begin{equation}\label{eq:cotlifth}
\Psi \colon T^\ast ( S^1 \times S^1) \to T^\ast (\overline S^1 \times \overline S^1),
\quad
\begin{pmatrix}
q,p
\end{pmatrix}
\mapsto
\begin{pmatrix}
h(q), D h(q)^{-T} p
\end{pmatrix}.
\end{equation}
Here $ D h(q)^{-T}$ denotes the inverse of the transpose of the Jacobin matrix of $h$ at the point $q$.
On the cotangent bundle of $\overline S^1 \times \overline S^1$ we consider the following family of Hamiltonians
\begin{equation}\label{eq:defoH}
\overline H_\mu (\overline q, \overline p) = \overline p_1^3 + \mu \overline p_1 + \overline p_2^2.
\end{equation}
Using the canonical symplectic form $-\d \overline \lambda$ on the cotangent bundle $T^\ast (\overline S^1 \times \overline S^1)$, we obtain a family of completely integrable Hamiltonian systems. The two independent integrals of motions are given as the coordinate functions $\overline p_1, \overline p_2$.
From Hamilton's equations we see that a motion is periodic at $\mu=0$ if ${\overline p_1(0)^2}/{\overline p_2(0)}$ is rational with initial condition $(\overline q,\overline p)=(\overline q_1(0),\overline q_2(0), \overline p_1(0),\overline p_2(0))$. In particular, motions on the Liouville torus
\[
\overline T = \left\{(\overline q_1,\overline q_2, \overline p_1,\overline p_2) = \left(\overline q_1,\overline q_2, 1,\frac 32\right) \, \Big| \, (\overline q_1,\overline q_2) \in \overline S^1 \times \overline S^1 \right\}
\]
are periodic for $\mu=0$ and their period is $\tau={2 \pi}/3$.
Now consider the family of Hamiltonian systems $(T^\ast ( S^1 \times S^1), -\d \lambda, H_\mu)$ with Hamiltonians $H_\mu = \overline H_\mu \circ \Psi$. The map $\Psi$ is symplectic such that a curve $\gamma$ is a motion in $(T^\ast ( S^1 \times S^1), -\d \lambda, H_\mu)$ if and only if $\Psi \circ \gamma$ is a motion in $(T^\ast ( \overline S^1 \times \overline  S^1), -\d \overline \lambda, \overline H_\mu)$. The preimage $T = \Psi^{-1}(\overline T)$ is a Liouville torus with orbits of the same period $\tau=2\pi/3$. The point $z = (\pi/2,0,1-3 \kappa/2,3/2)$ is an isolated point in the intersection of $T$ with the Lagrangian submanifold
\[
B = \left\{ (q_1,q_2,p_1,p_2)=\left( q_1,q_2,1-\frac 32 \kappa, \frac 32 \right) \, \Big | \, q_1,q_2 \in S^1 \right\}.
\]
Moreover, the tangent spaces of $T$ at $z$ and of $B$ at $z$ intersect in a 1-dimensional subspace which can be verified by a consideration of their images under the symplectomorphism $\Psi$. Since the parameter $\mu$ enters in a generic way, the lemma below follows by \cite[Thm. 3.2]{bifurHampaper}.

\begin{lemma}\label{lem:pitchforkpt} For $\e,\kappa \in (-1,1)\setminus\{0\}$ there exists a periodic pitchfork bifurcation at $\mu=0$ at the point $z = (\pi/2,0,1-3/2 \kappa,3/2)$ in the symmetrically separated Lagrangian boundary value problem
\[
P_\mu\left(q_1,q_2,1-\frac 32 \kappa, \frac 32\right) = \begin{pmatrix}
1-\frac 32 \kappa\\ \frac 32
\end{pmatrix}
\]
for the family of time-$2\pi/3$-maps $(Q_\mu,P_\mu)$ of the completely integrable Hamiltonian systems $(T^\ast ( S^1 \times S^1), -\d \lambda, H_\mu)$, where $-\d\lambda$ is the canonical symplectic form for cotangent bundles and $H_\mu = \overline H_\mu \circ \Psi$ is defined by \eqref{eq:changecoordsS1S1}, \eqref{eq:cotlifth}, \eqref{eq:defoH}.
\end{lemma}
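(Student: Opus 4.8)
The plan is to verify that the system built above meets every hypothesis of the periodic-pitchfork criterion \cite[Thm. 3.2]{bifurHampaper}, which guarantees a periodic pitchfork in a one-parameter family of symmetrically separated Lagrangian boundary value problems for a completely integrable Hamiltonian system provided that, at the point in question, (i) the point lies on a Liouville torus filled with orbits of the prescribed period $\tau$, (ii) the boundary manifold touches that torus in an isolated, one-dimensional contact, and (iii) the parameter unfolds this contact generically. Because $\Psi$ is a symplectomorphism that conjugates the flow of $H_\mu$ to the explicit linear flow of $\overline H_\mu$, each of these conditions can be read off in the barred coordinates, where the dynamics of \eqref{eq:defoH} is trivial, and then transported back by $\Psi$.

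For (i) I would first note that, since $\Psi$ is the cotangent lift \eqref{eq:cotlifth} of a diffeomorphism and hence symplectic, the functions $\overline p_1\circ\Psi$ and $\overline p_2\circ\Psi$ are two independent, Poisson-commuting integrals of $H_\mu=\overline H_\mu\circ\Psi$, so the system is completely integrable and $\Psi$ intertwines the two flows. Hamilton's equations for \eqref{eq:defoH} read $\dot{\overline q}_1=3\overline p_1^2+\mu$ and $\dot{\overline q}_2=2\overline p_2$ with $\overline p_1,\overline p_2$ constant; on $\overline T$ at $\mu=0$ both frequencies equal $3$, so every orbit of $\overline T$ closes after time $\tau=2\pi/3$, and pulling back by $\Psi$ shows that $T=\Psi^{-1}(\overline T)$ is a Liouville torus of $H_0$ entirely filled with period-$\tau$ orbits.

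For (ii) I would confirm the contact at $z$. Membership $z\in B$ is immediate from its momentum coordinates, and evaluating $\Psi(z)$ gives $\overline p$-component $(1,\tfrac32)$, so $z\in T$. To see that $z$ is isolated in $T\cap B$ I would solve $Dh(q)^{T}(1,\tfrac32)^{T}=(1-\tfrac32\kappa,\tfrac32)^{T}$; the two scalar equations collapse to $\tfrac32\kappa\sin q_1=\tfrac32\kappa$ and $\e\sin q_2=0$, whose common solution set is discrete precisely because $\kappa\neq0$ and $\e\neq0$. Parametrising $T$ by $q\mapsto(q,Dh(q)^{T}(1,\tfrac32)^{T})$ and $B$ by $q\mapsto(q,(1-\tfrac32\kappa,\tfrac32))$ and differentiating at $z=(\pi/2,0,\cdot,\cdot)$, I would then check that $T_zT\cap T_zB$ is one-dimensional; the condition $\e\neq0$ enters again here, since the entry $-\e\sin q_2$ of $Dh^{T}$ tilts the second momentum direction of $T$ out of $B$ and so prevents a full two-dimensional tangency.

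Condition (iii), the genericity of $\mu$, is where I expect the main obstacle to lie, since it is the one requirement not already visible at first order from the construction. The resonance that makes $\overline T$ carry period-$\tau$ orbits is the equality of the two frequencies, and $\partial_\mu\dot{\overline q}_1=1\neq0$ (a consequence of the cubic term $\overline p_1^3$) detunes it transversally, so the locus of period-$\tau$ tori sweeps across $B$ at unit speed in $\mu$. What remains, and what \cite[Thm. 3.2]{bifurHampaper} actually needs, is that this transversal motion together with the one-dimensional contact produces the $\Z_2$-symmetric pitchfork normal form $y^3+\mu y$ for the reduced gradient rather than a more degenerate unfolding; I would establish this by computing, in the barred coordinates where everything is explicit, the second-order contact of $B$ with the foliation by $\overline T$-type tori along the intersection direction together with its first $\mu$-derivative. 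Once this nondegeneracy and transversality are verified, \cite[Thm. 3.2]{bifurHampaper} applies and delivers the periodic pitchfork at $\mu=0$.
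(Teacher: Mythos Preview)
Your proposal is correct and follows essentially the same approach as the paper: the paper's argument consists of the construction preceding the lemma together with the single sentence ``Since the parameter $\mu$ enters in a generic way, the lemma below follows by \cite[Thm.~3.2]{bifurHampaper},'' and you are simply expanding out the verification of the hypotheses (complete integrability via the pullback integrals, periodicity on $T$, isolatedness of $z$ in $T\cap B$, one-dimensional tangent intersection, and genericity of $\mu$) that the paper either sketches or asserts without computation. Your explicit check that $T\cap B$ is discrete and that $T_zT\cap T_zB$ is one-dimensional, via the equations $\tfrac32\kappa\sin q_1=\tfrac32\kappa$ and $\e\sin q_2=0$ and the derivative of the parametrisation $q\mapsto Dh(q)^T(1,\tfrac32)^T$, is exactly the content of the paper's sentence ``the tangent spaces of $T$ at $z$ and of $B$ at $z$ intersect in a 1-dimensional subspace which can be verified by a consideration of their images under the symplectomorphism $\Psi$.''
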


Figure \ref{fig:captureintegrablepitchfork} shows the bifurcation diagrams of the numerical flow for the boundary value problem described in lemma \ref{lem:pitchforkpt} for $\e = \kappa = 0.1$. Hamilton's equations are solved over the time interval $[0, 2\pi/3]$ using the symplectic 2nd order St\"ormer-Verlet method with 20, 40 and 80 time-steps. The results indicate that the periodic pitchfork bifurcation of the exact flow is only captured up to the accuracy of the integrator.

\begin{figure}
\begin{center}
\includegraphics[width=0.325\textwidth]{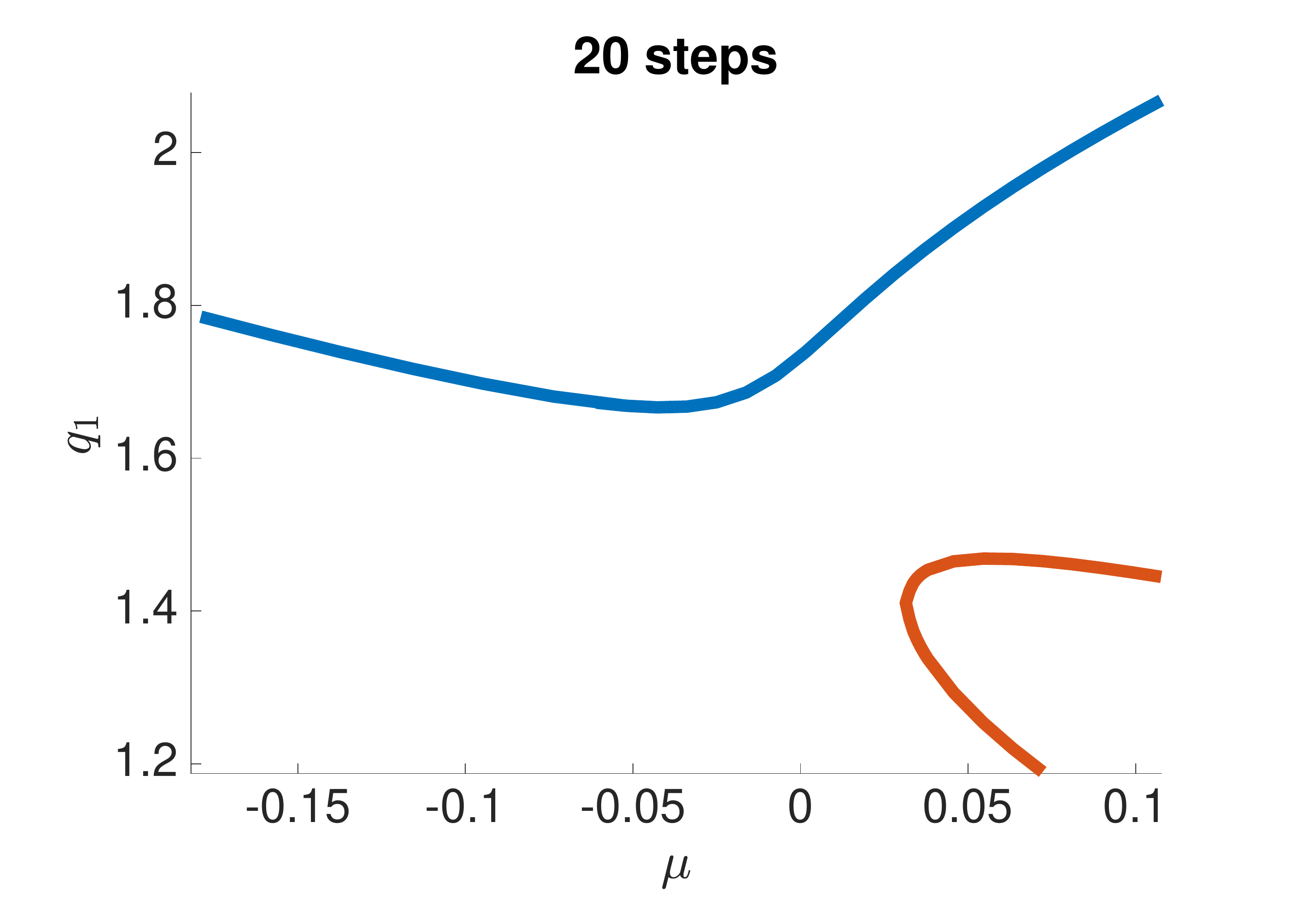}
\includegraphics[width=0.325\textwidth]{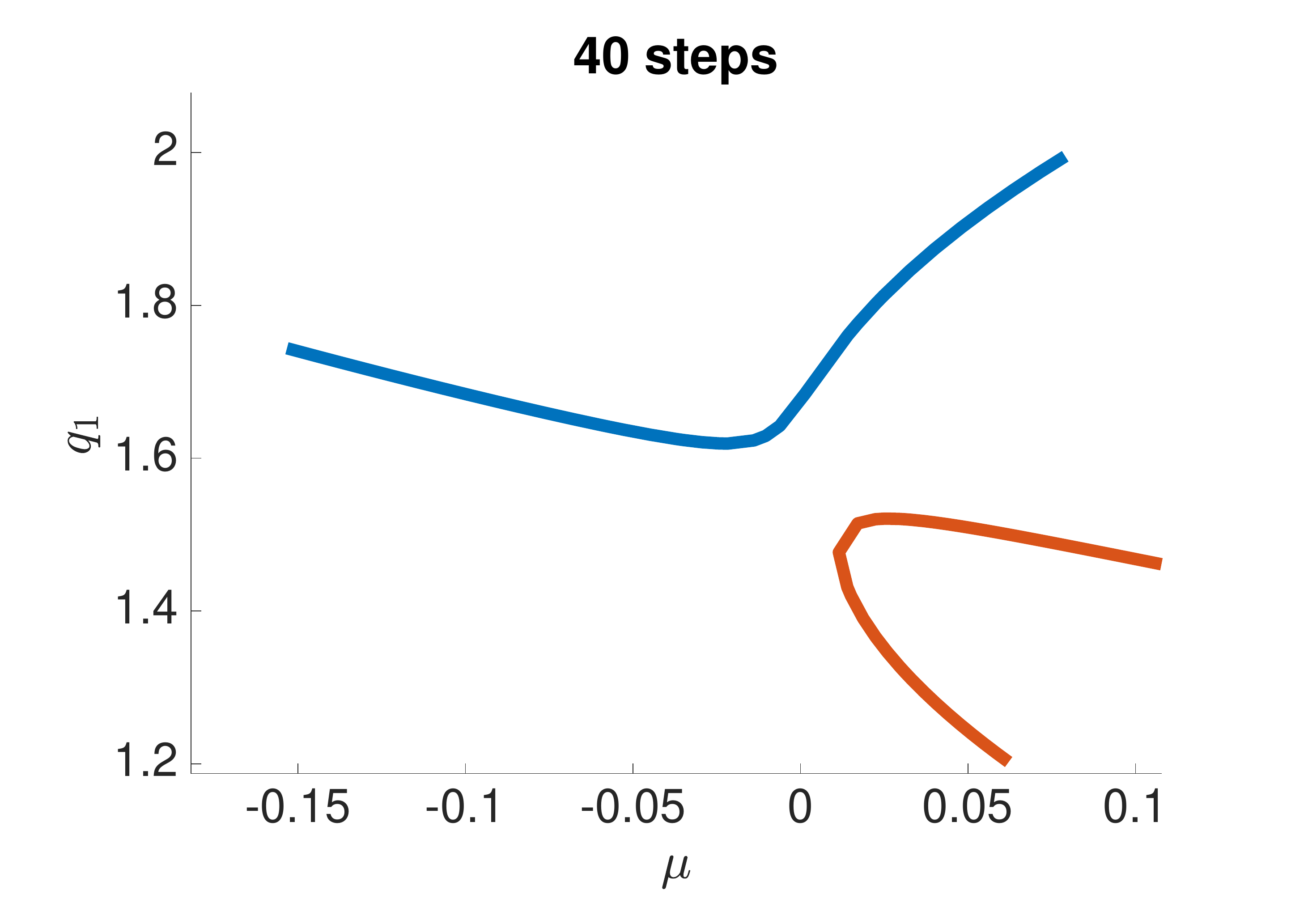}
\includegraphics[width=0.325\textwidth]{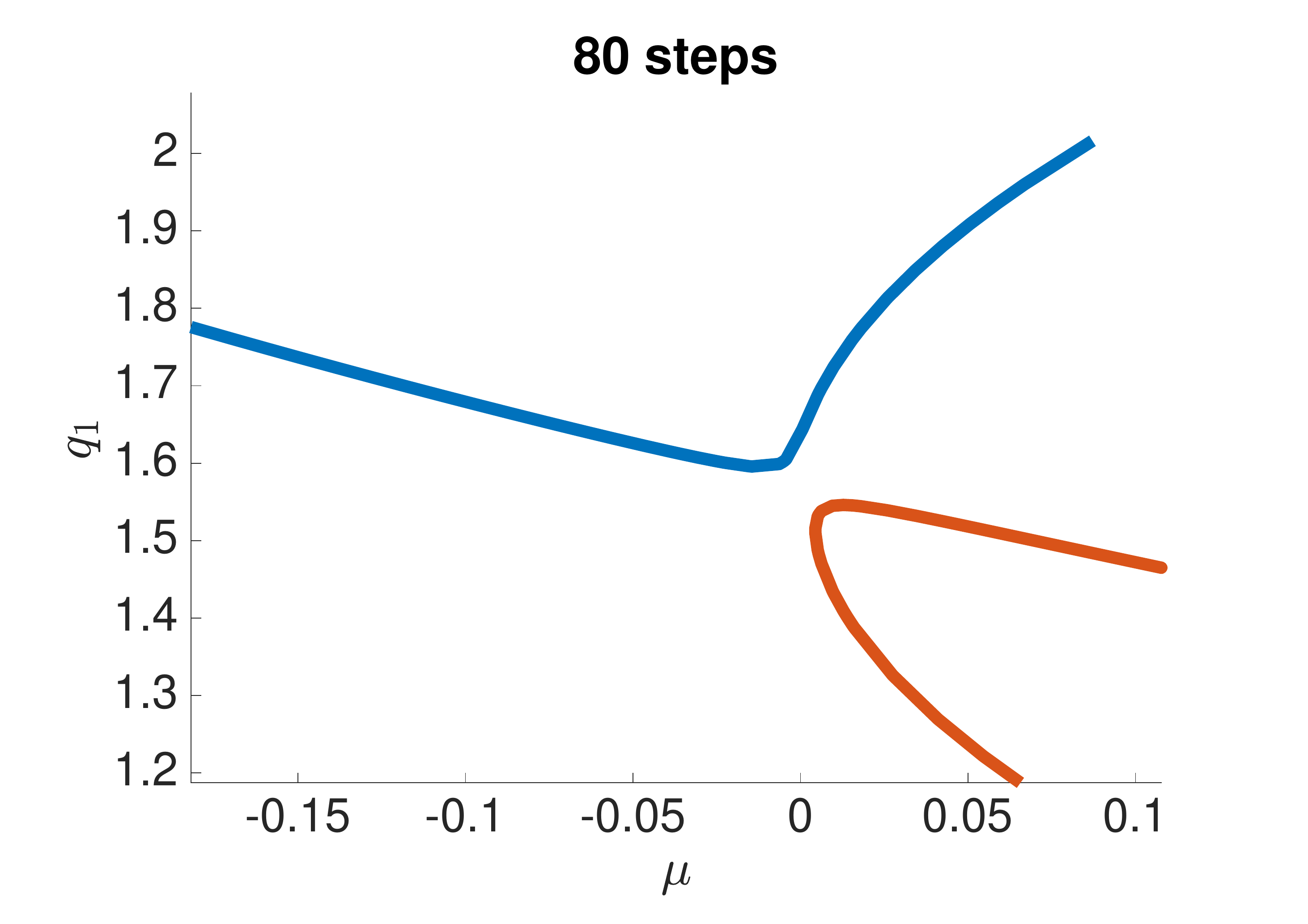}
\end{center}
\caption{Plot of bifurcation diagrams for boundary value problem of lemma \ref{lem:pitchforkpt} in numerical flow projected along the $q_2$-axis obtained using the symplectic 2nd order St\"ormer-Verlet method. The shape of the periodic pitchfork bifurcation is only captured as expected from the accuracy of the method.}\label{fig:captureintegrablepitchfork}
\end{figure}

\subsection{Affine linear symmetries and the St\"ormer-Verlet method}\label{subsec:SymmetricCapture}

The St\"ormer-Verlet method preserves linear invariants \cite[Thm. IV 1.5]{GeomIntegration} and quadratic invariants of the form $Q(q,p)=q^t A p$ for a fixed matrix $A$ \cite[Thm. IV 2.3]{GeomIntegration}. Let us see how a periodic pitchfork bifurcation is captured in two numerical examples of completely integrable Hamiltonian systems with simple symmetries / invariants.

\begin{example}[Cyclic variable]\label{ex:cylicvariable}
If a variable does not occur in the expression of a Hamiltonian then its conjugate momentum is a conserved quantity. The conserved quantity can be treated as a parameter for the system such that Hamilton's equations can be solved on a space whose dimension is reduced by two where the cyclic variable and its conjugate momentum do not appear as dynamical variables. The evolution in the cyclic variable can then be integrated separately. If the phase space dimension is $2n$ and the Hamiltonian has $n-1$ cyclic variables then we can obtain the same behaviour as for planar Hamiltonian systems by applying the symplectic integrator to the reduced planar system. However, even if we apply the St\"ormer-Verlet method to the non-reduced system, the conjugate momenta are preserved as these are linear integrals of motions. The integrals correspond to translation symmetries in the cyclic variables.

We consider the family of Hamiltonian systems defined by
\begin{equation}\label{eq:HamPP}
H_\mu(q_1,q_2,p_1,p_2)
= q_1^3+\mu q_1 + p_1 p_2 + p_1^2 + \frac 1 {10}(p_1^3+p_2^3)
\end{equation}
on the phase space $\R^4$ with the standard symplectic structure. The variable $q_2$ is cyclic. Let us consider the symmetric Dirichlet boundary value problem
\begin{equation}\label{eq:bvpforPP}
q(0)=\begin{pmatrix}0.2\\ 0.1\end{pmatrix} = q(5).
\end{equation}

Using the symplectic, 2nd order St\"ormer-Verlet method to calculate the numerical flow, we find a pitchfork bifurcation. Introducing the cyclic variable into the Hamiltonian by adding the term $0.01q_2$ breaks the pitchfork bifurcation. This confirms that the pitchfork bifurcation is due to the completely integrable structure (see figure \ref{fig:PPcyclic}).

\begin{figure}
\begin{center}
\includegraphics[width=0.49\textwidth]{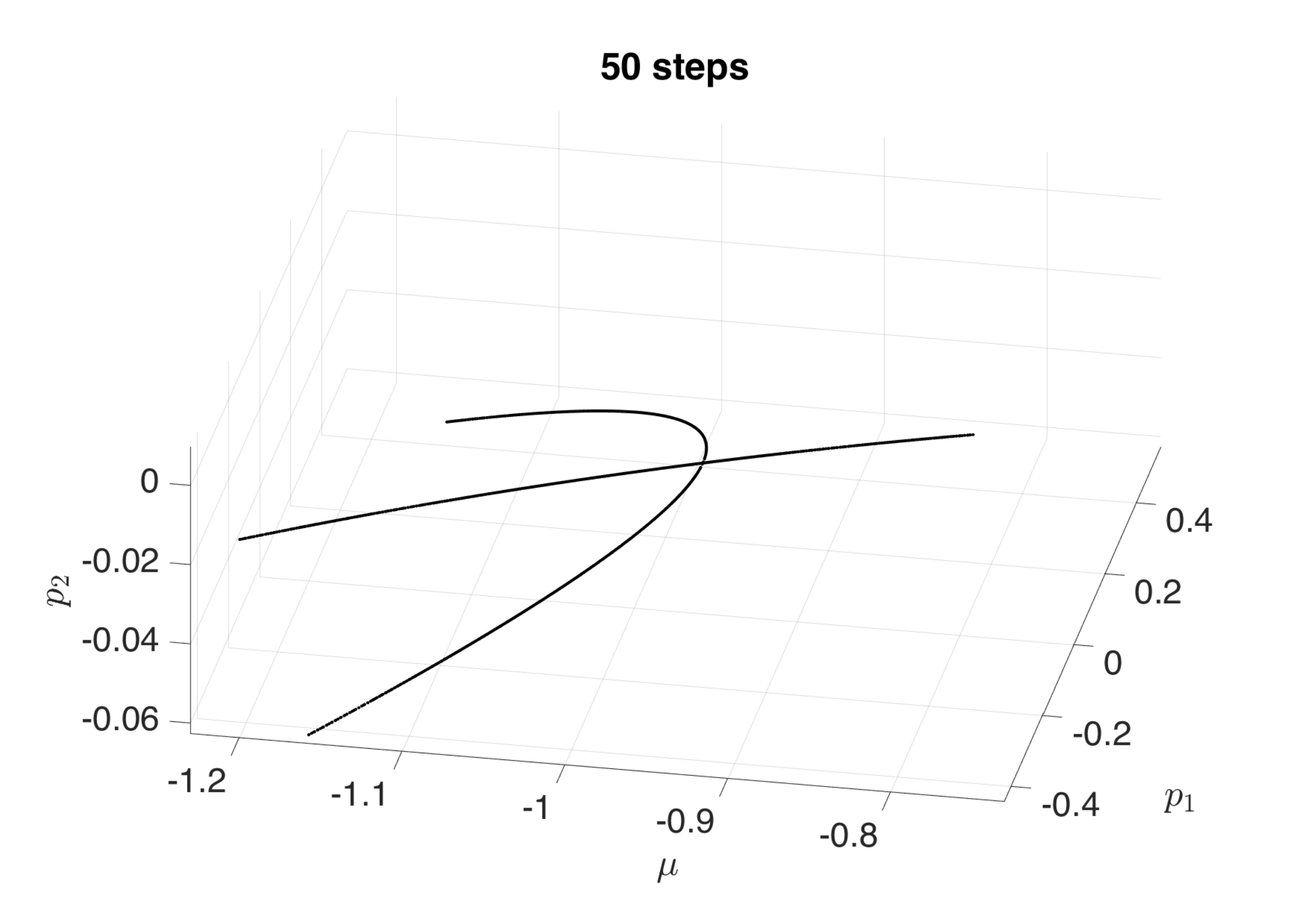}
\includegraphics[width=0.49\textwidth]{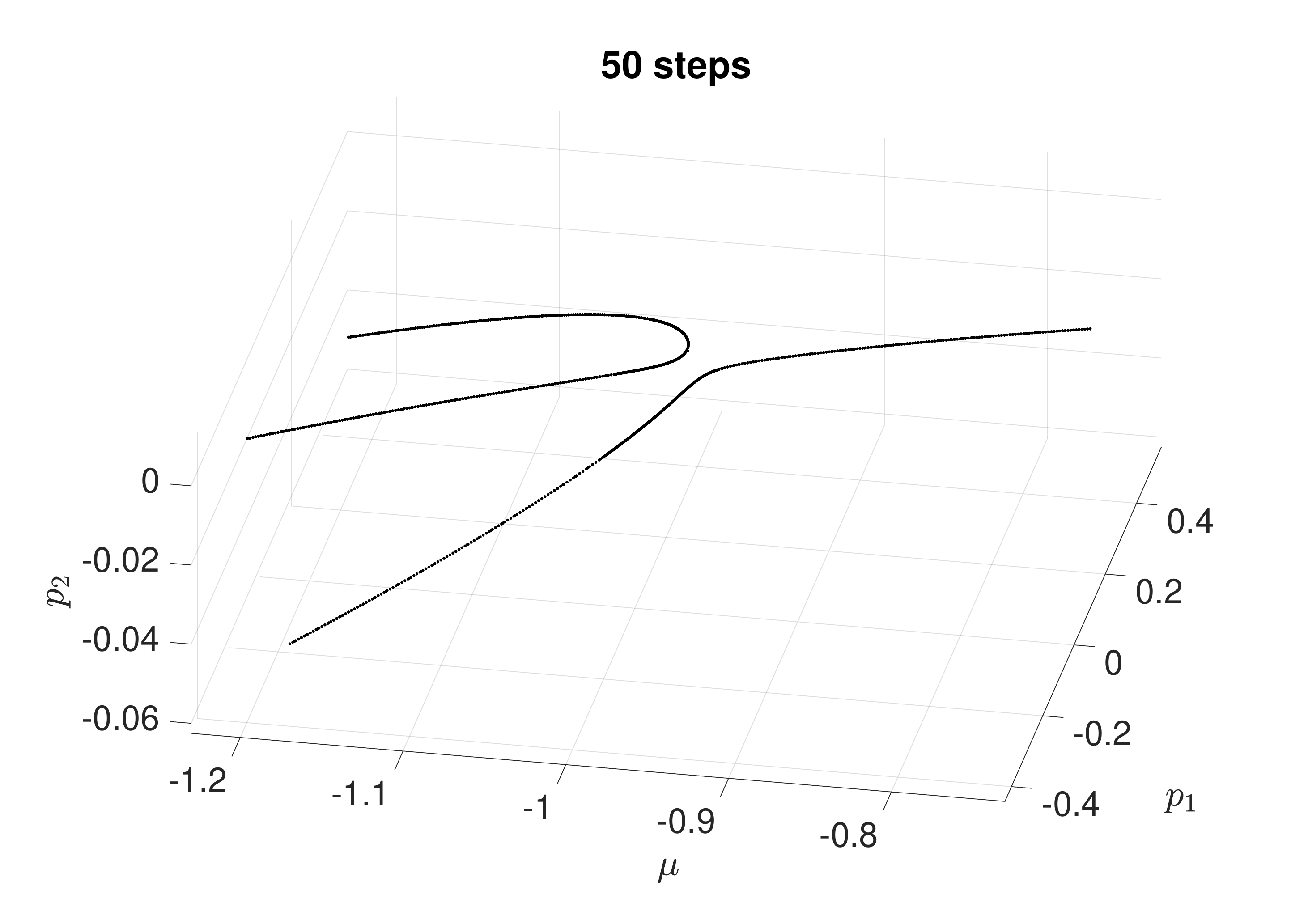}
\end{center}
\caption{The plot to the left shows a periodic pitchfork bifurcation in the 4 dimensional Hamiltonian system \eqref{eq:HamPP} with a cyclic variable for the boundary value problem \eqref{eq:bvpforPP}. The plot to the right shows how the pitchfork breaks if the cyclic variable is introduced in the Hamiltonian. This confirms that the appearance of the pitchfork is due to the complete integrable structure. For both plots the St\"ormer-Verlet method with 50 time-steps was used.}\label{fig:PPcyclic}
\end{figure}

\begin{figure}
\begin{center}
\includegraphics[width=0.325\textwidth]{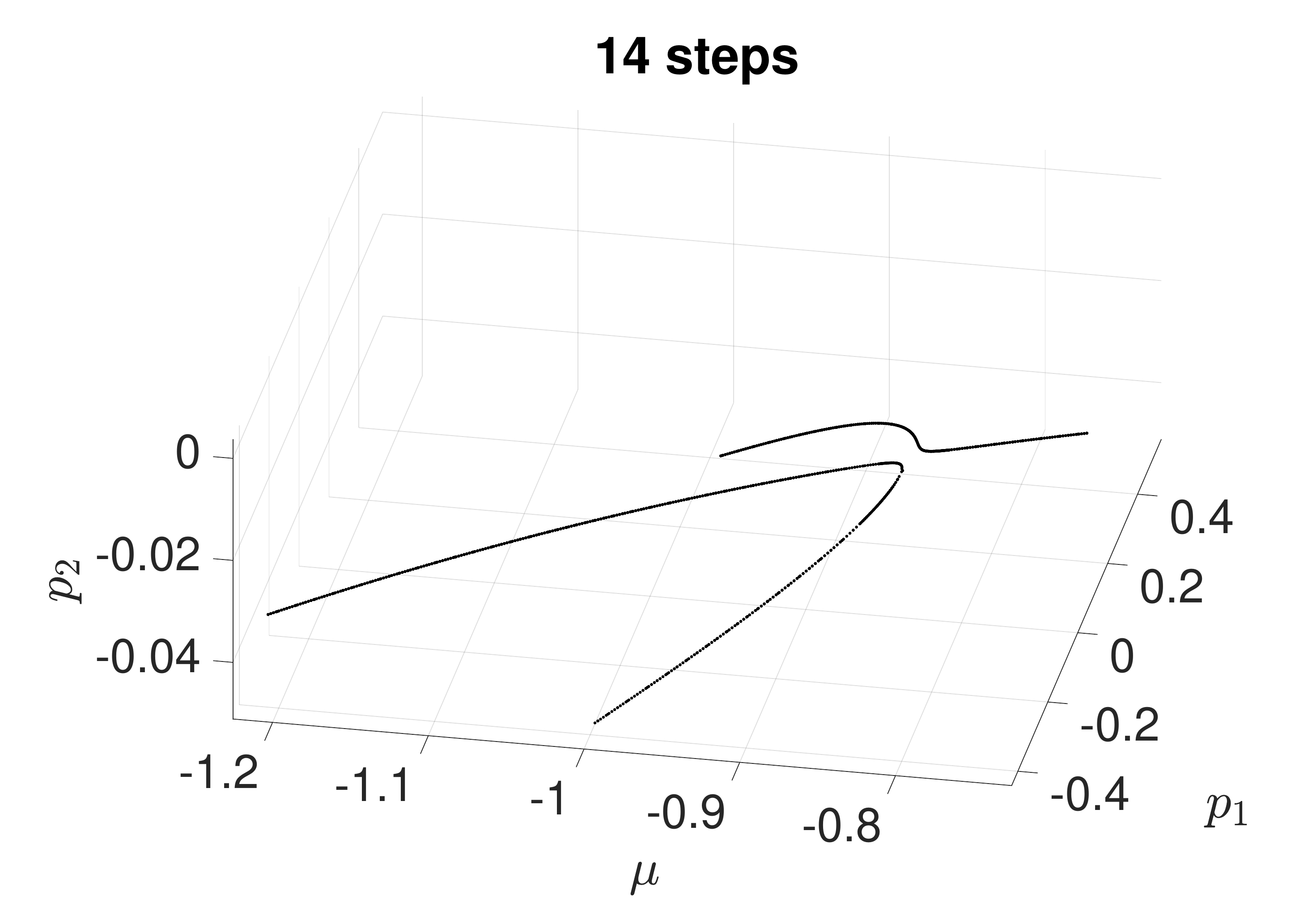}
\includegraphics[width=0.325\textwidth]{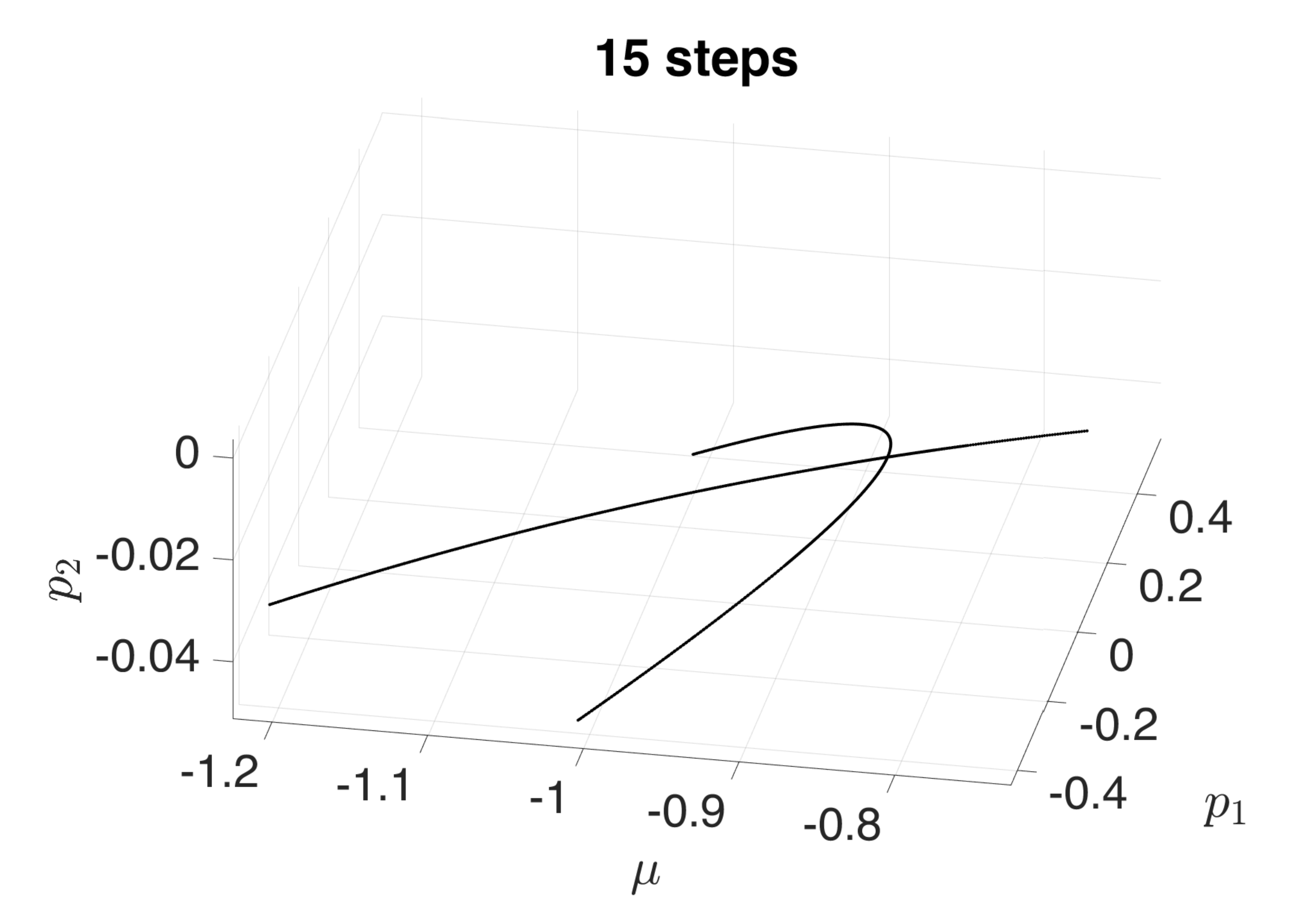}
\includegraphics[width=0.325\textwidth]{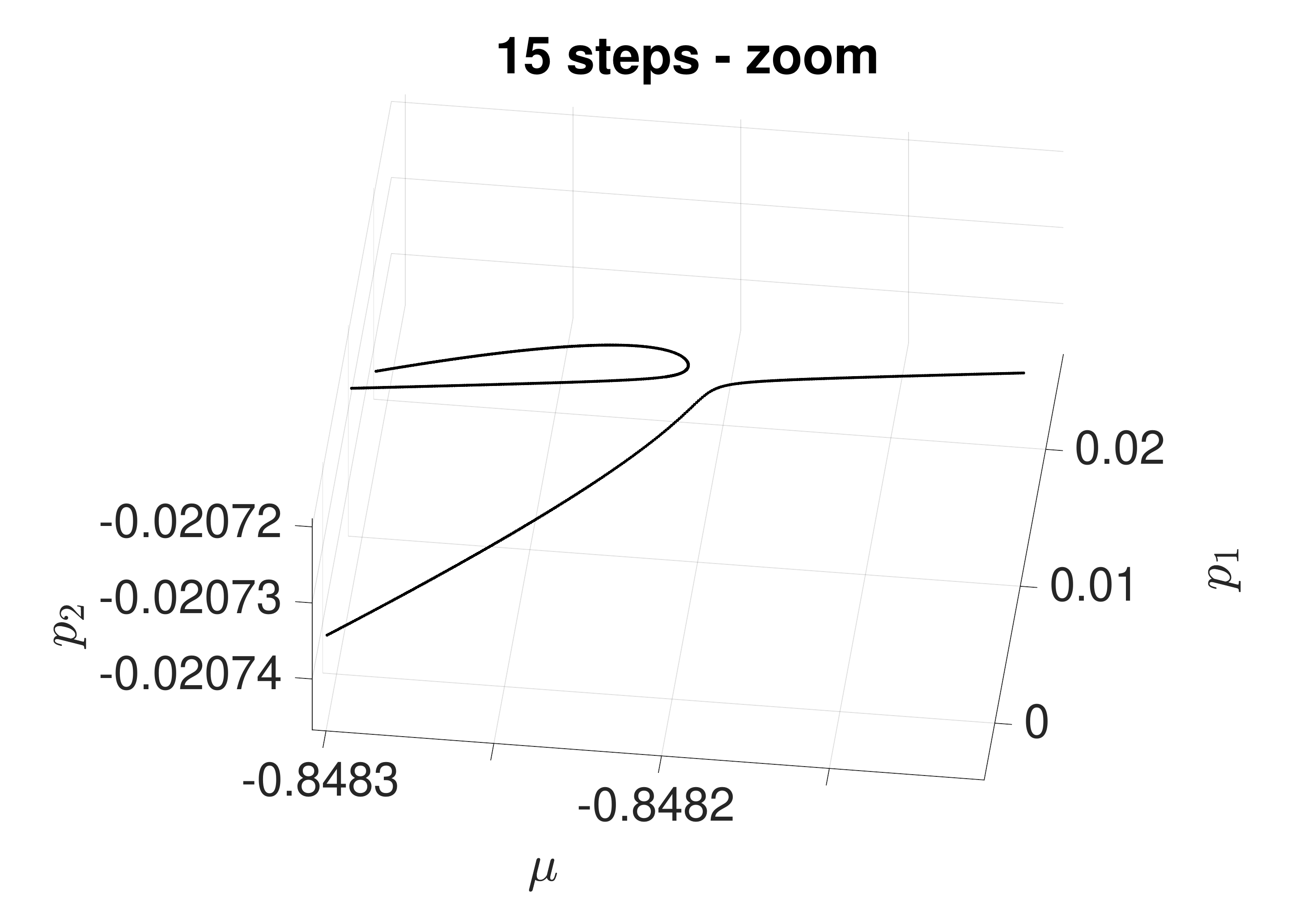}
\end{center}
\caption{The plots show how the periodic pitchfork bifurcation shown in figure \ref{fig:PPcyclic} breaks as we reduce the number of time-steps in the St\"ormer-Verlet scheme. While the break is clearly visible when 14 steps are used, it can only be spotted in a close-up when 15 steps are used.}\label{fig:PPcyclic1415}
\end{figure}

We reduce the number of steps in our calculation to analyse how the pitchfork bifurcation breaks. Figure \ref{fig:PPcyclic1415} shows that there is a clearly visible break when 14 steps are used. If 15 steps are used, however, the break can only be spotted in a close-up.
The $p_1$-axis is scaled approximately by a factor 10 and the $p_2$-axis and $\mu$-axis by 1000. Since the number of steps is only increased by 1, corresponding to an increase by approximately 7\%, this indicates a capturing to higher than polynomial order as we will justify in proposition \ref{prop:cappitch}. 
\end{example}

\begin{example}[Linear conservation law / linear symmetry]
Let us apply a linear, symplectic change of coordinates to the Hamiltonian boundary value problem (\ref{eq:HamPP},~\ref{eq:bvpforPP}) in the cyclic-variable example and test the behaviour of the St\"ormer-Verlet method.

If $A\in \mathrm{Gl}(n,\R)$ is a linear transformation of $\R^n$ then
\begin{equation}\label{eq:PhiDef}
\begin{pmatrix}\tilde q\\ \tilde p \end{pmatrix} 
= \Psi(q,p)
=\begin{pmatrix}A q\\ A^{-T} p \end{pmatrix}
\end{equation}
is a symplectic transformation on $T^\ast\R^{n}$.
We consider
\[
A = \begin{pmatrix} -1&2\\3&1\end{pmatrix}.
\]
and apply the transformation defined by $\Psi$ to the Hamiltonian boundary value problem (\ref{eq:HamPP},~\ref{eq:bvpforPP}) considered in example \ref{ex:cylicvariable}. In the transformed system $H\circ \Psi^{-1}$ and $q_2 \circ \Psi^{-1}$ are integrals of motions.
Figure \ref{fig:PPcyclic1415newcoords} shows the analogous situation to figure \ref{fig:PPcyclic1415} in the new coordinates. We see that a linear change of coordinates does not have any effect on how well the bifurcation is captured. This is to be contrasted to the integrable system presented in section \ref{subsec:modelnontrivialsym} whose quantities are not affine-linear in the variables used to integrate the Hamiltonian flow and the bifurcation is captured only up to the accuracy of the integrator (figure \ref{fig:captureintegrablepitchfork}).

\begin{figure}
\begin{center}
\includegraphics[width=0.325\textwidth]{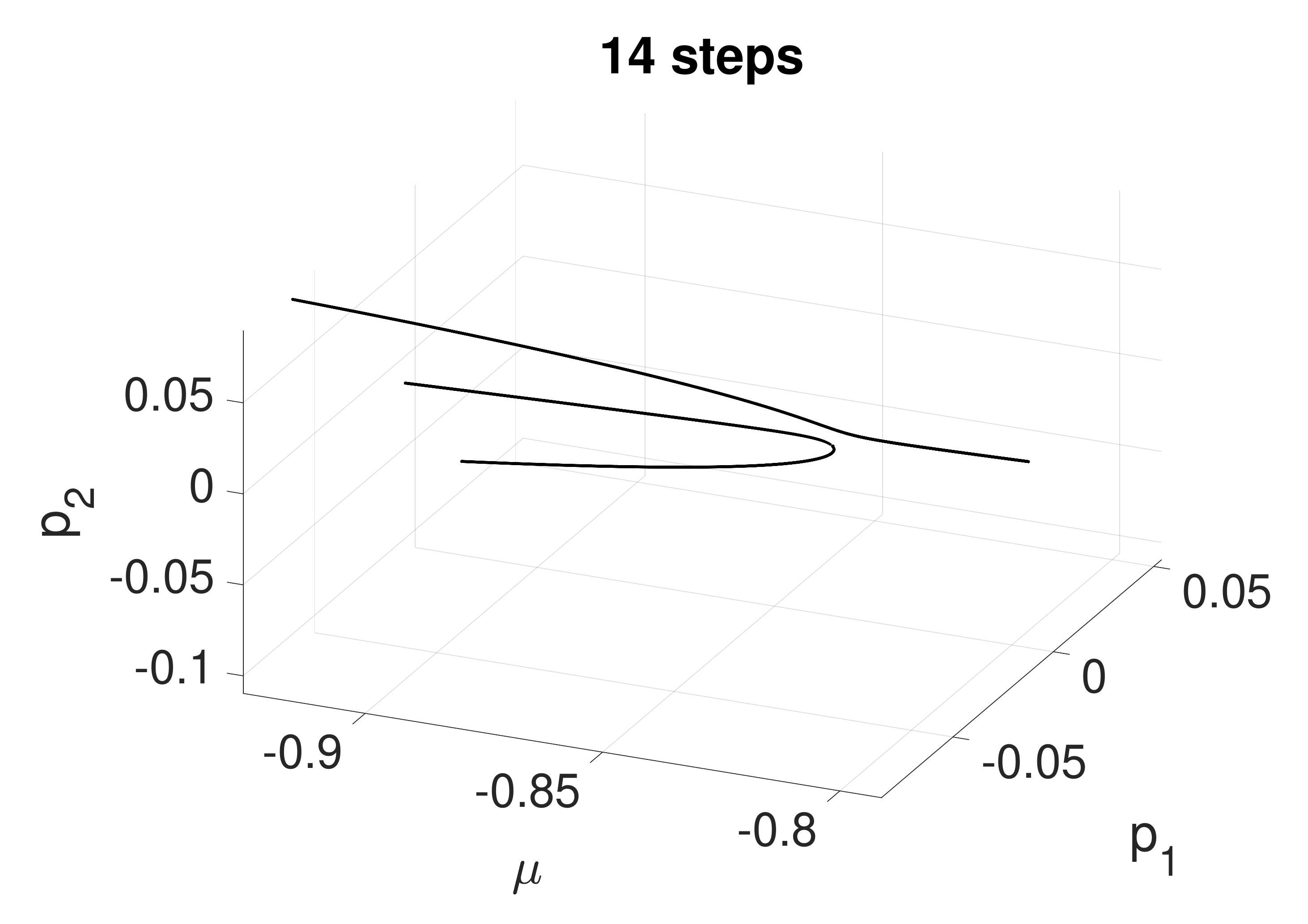}
\includegraphics[width=0.325\textwidth]{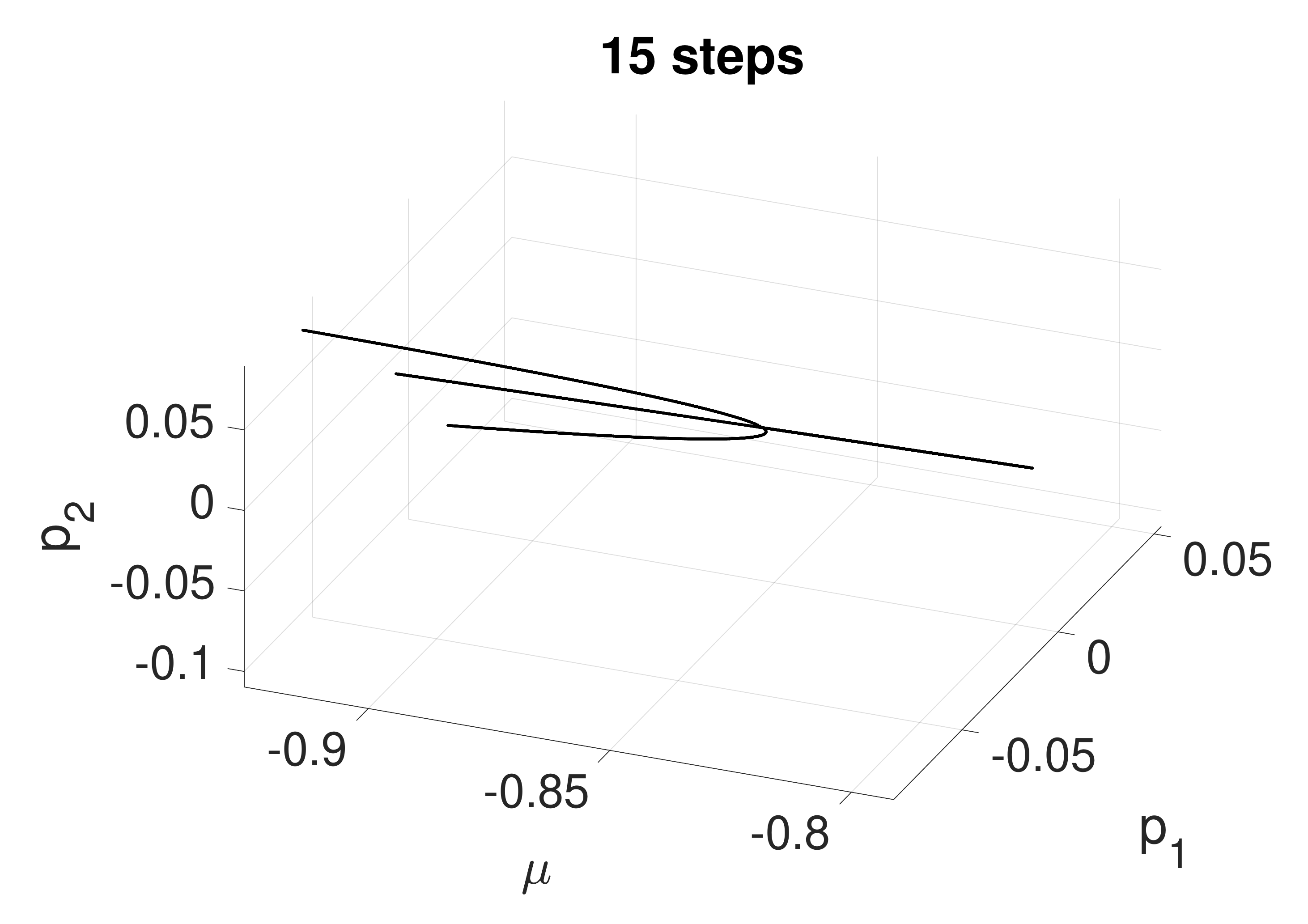}
\includegraphics[width=0.325\textwidth]{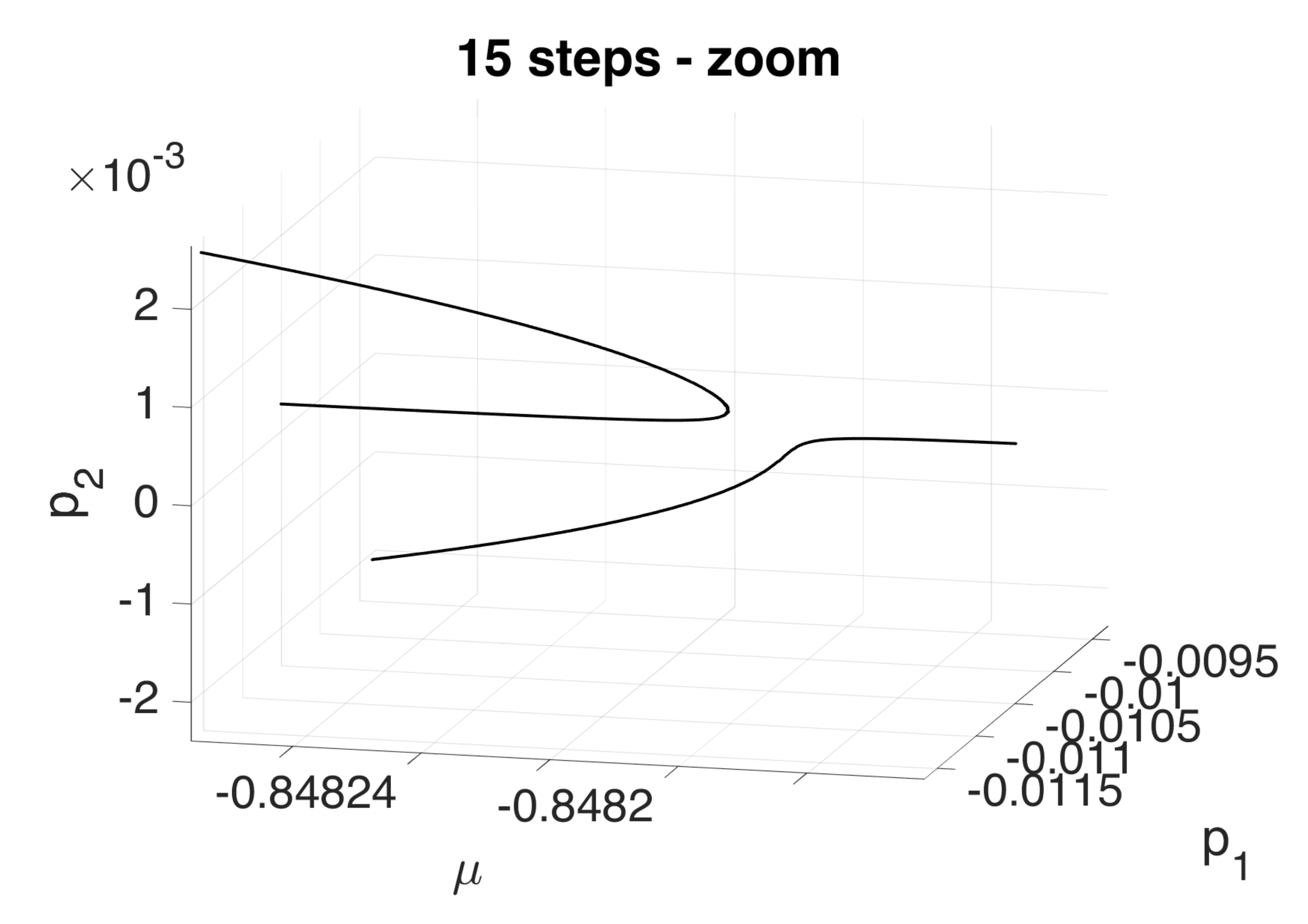}
\end{center}
\caption{The plots show how a periodic pitchfork bifurcation is captured if one of the integrals is linear. While the break is clearly visible when 14 steps are used, it can only be spotted in a close-up when 15 steps are used. This is in analogy to the cyclic-variable case (figure \ref{fig:PPcyclic1415}) but different to the case of complicated integrals (figure \ref{fig:captureintegrablepitchfork}).}\label{fig:PPcyclic1415newcoords}
\end{figure}
\end{example}

\subsection{Theoretical consideration of the effects of symplectic structure preserving discretisation}\label{subsec:TheoCapture}

To which extent the completely integrable structure of a system is present in the numerical flow determines how well a pitchfork bifurcation is captured. This is made precise in the following

\begin{prop}[capturing of periodic pitchfork bifurcation]\label{prop:cappitch}
Consider a smooth 1-parameter family of Hamiltonian boundary value problems for $2n$-dimensional completely integrable Hamiltonian systems with symmetrically separated Lagrangian boundary conditions and a generic periodic pitchfork bifurcation.
Consider a discretisation of the Hamiltonian flows by a symplectic integrator with order of accuracy $k$ and constant step-size $h$. In a generic setting for sufficiently small $h$ the family of numerical flows has a bifurcation that is close to a pitchfork bifurcation 
\begin{itemize}
\item
to exponential order in $h^{-1}$ if all $n$ integrals are preserved exponentially well (e.g.\ in the planar case)
\item
to order $k$ otherwise.
\end{itemize}
\end{prop}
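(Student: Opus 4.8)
The plan is to pass to the modified (shadow) system furnished by backward error analysis and to show that the periodic pitchfork survives precisely to the extent that this modified system remains completely integrable. Recall from \cite{bifurHampaper} that, near the bifurcation point, the symmetrically separated boundary value problem reduces to the critical-point problem $\nabla g_\mu=0$ for a family of reduced functions $g_\mu$, and that the periodic pitchfork corresponds to a pitchfork singularity of $g_\mu$ carrying a reflection ($\Z_2$) symmetry; this symmetry exchanges the two solutions cut out on a single resonant Liouville torus by the symmetric condition $\Lambda=\Lambda'$, and it is what makes the pitchfork a codimension-$1$ phenomenon in the integrable class. A symmetry-breaking unfolding term $\alpha\,x$ added to $g_\mu$ destroys the pitchfork, splitting it into a fold together with a disconnected branch, the deformation being of size $O(|\alpha|)$ in the defining family. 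Hence the numerical bifurcation is ``close to a pitchfork'' exactly to the order of the symmetry-breaking amplitude $\alpha$ produced by the discretisation, and the whole task is to estimate $\alpha$.

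First I would invoke backward error analysis for symplectic integrators \cite[IX]{GeomIntegration}: with constant step-size $h$ and order $k$, the numerical time-$\tau$ map ($\tau=Nh$) coincides, up to an exponentially small error $O(e^{-c/h})$, with the exact time-$\tau$ map of a modified Hamiltonian $\tilde H_h=H+h^k H_k+O(h^{k+1})$. Consequently the reduced function $\tilde g_\mu$ of the numerical problem equals that of the modified Hamiltonian system up to $O(e^{-c/h})$, so the question reduces to the persistence of the reflection symmetry of $\tilde g_\mu$, equivalently to the persistence of the complete integrability of the modified system.

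Next I would split into the two cases. If all $n$ integrals $F_1=H,F_2,\dots,F_n$ are preserved exponentially well by the integrator---as for linear or $q^{\mathrm t}Ap$-type integrals preserved by St\"ormer--Verlet \cite[IV 1.5]{GeomIntegration},\cite[IV 2.3]{GeomIntegration}, or in the planar case where $\tilde H_h$ is the only integral---then each $F_i$ is constant along numerical orbits to exponential order, so $\{F_i,\tilde H_h\}=O(e^{-c/h})$ and the modified system is completely integrable to exponential order. Its Liouville foliation, the resonant torus carrying period-$\tau$ orbits, and the tangency of $\Lambda$ with that torus therefore persist up to $O(e^{-c/h})$; the reflection symmetry of $\tilde g_\mu$ survives to the same order, giving $\alpha=O(e^{-c/h})$ and a bifurcation exponentially close to a pitchfork. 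In the complementary case $\tilde H_h$ is still preserved exponentially, but for some $i\ge 2$ one has $\{F_i,\tilde H_h\}=h^k\{F_i,H_k\}+O(h^{k+1})$, which is generically non-zero; integrability, and with it the symmetry of $\tilde g_\mu$, is broken at order $h^k$. The resonant torus then disintegrates in Poincar\'e--Birkhoff fashion into isolated periodic orbits, the continuous family underlying the pitchfork is lost, and the diagram unfolds into a fold plus a disconnected branch with $\alpha=O(h^k)$, matching figure \ref{fig:captureintegrablepitchfork}.

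The main obstacle is the bookkeeping that links the breaking of integrability to the breaking of the pitchfork \emph{at the same order}. One must show that the reflection symmetry of $g_\mu$ is equivalent to invariance of the resonant Liouville torus under the flow, so that an $O(h^k)$ defect $\{F_i,H_k\}$ feeds into an $O(h^k)$ amplitude $\alpha$ with no accidental cancellation; this is where the genericity hypothesis enters, guaranteeing $\{F_i,H_k\}\not\equiv 0$ and that $\alpha$ depends on this defect transversally. The remaining steps---transferring the $O(e^{-c/h})$ and $O(h^k)$ bounds from $\tilde g_\mu$ to the numerical reduced function, and the standard unfolding analysis of the pitchfork normal form---are routine.
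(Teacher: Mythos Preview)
Your proposal is correct and follows essentially the same route as the paper: reduce to the generating-function normal form $x^4+\mu_2 x^2+\mu_1 x$ with the $\Z/2\Z$-symmetry forcing $\mu_1=0$, then argue via backward error analysis that the discretisation enters as the symmetry-breaking parameter $\mu_1$ at order $h^k$ unless all $n$ integrals (not just the modified Hamiltonian) are preserved exponentially well. Your treatment is somewhat more explicit than the paper's about the Poisson-bracket mechanism $\{F_i,\tilde H_h\}$ and about the transversality bookkeeping linking loss of integrability to the amplitude $\alpha$, but the underlying argument is the same.
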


\begin{remark}
As shown in \cite[Sec.\ 3.3.2.]{bifurHampaper} and recalled in figure \ref{fig:periodicpitchforkmechanism}, 
a periodic pitchfork bifurcation takes place at an intersection point $z$ of the boundary condition $\Lambda$ with a Liouville torus $\mathcal T$ invariant under the Hamiltonian flow, such that the intersection of the tangent spaces $T_z\mathcal T$ and $T_z \Lambda$ is 1-dimensional.
The torus $\mathcal T$ is required to consist of periodic orbits of a given period $\tau$.
It is, therefore, highly resonant and immediately destroyed when the Hamiltonian flow is perturbed, even when the perturbation is symplectic.
Thus, results obtained by KAM-theory about the exponentially long persistence of invariant Liouville tori under symplectic discretisation (see \cite[X.5.2]{GeomIntegration}) do not apply in this setting as non-resonance conditions are not fulfilled. This is why symplectic integrators can break the structure significant for pitchfork bifurcations in a general setting as we saw in figure \ref{fig:captureintegrablepitchfork}.
\end{remark}

\begin{proof}[Proof of proposition \ref{prop:cappitch}]
In \cite{bifurHampaper} the authors reveal how the completely integrable structure and the structure of the boundary conditions induce a $\Z/2\Z$-symmetry in the generating function of the problem family.
The singular point of a pitchfork bifurcation is unfolded under the presence of a $\Z/2\Z$-symmetry to a pitchfork bifurcation. The corresponding critical-points-of-a-function problem is defined by the family $(x^4+\mu_2 x^2)_{\mu_2}$, i.e.\ the generating function of the problem family is stably right-left equivalent to the family $(x^4+\mu_2 x^2)_{\mu_2}$.
Unfolding of $x^4$ without the $\Z/2\Z$-symmetry leads, however, to the normal form of a cusp bifurcation which is defined by the family  $(x^4+\mu_2 x^2+\mu_1x)_{\mu_1,\mu_2}$. The effect of the symmetry breaking parameter $\mu_1$ is illustrated in figure \ref{fig:pitchforkcusp}: the bifurcation, which is present for $\mu_1=0$, breaks if $\mu_1 \not=0$.

Approximating the Hamiltonian flow with an integrator introduces the discretisation parameter $h$ as an additional parameter to the problem family. The discretisation does not respect the completely integrable structure which corresponds to a $\Z / 2\Z$-structure of the generating function.
\begin{figure}
\begin{center}
\includegraphics[width=0.6\textwidth]{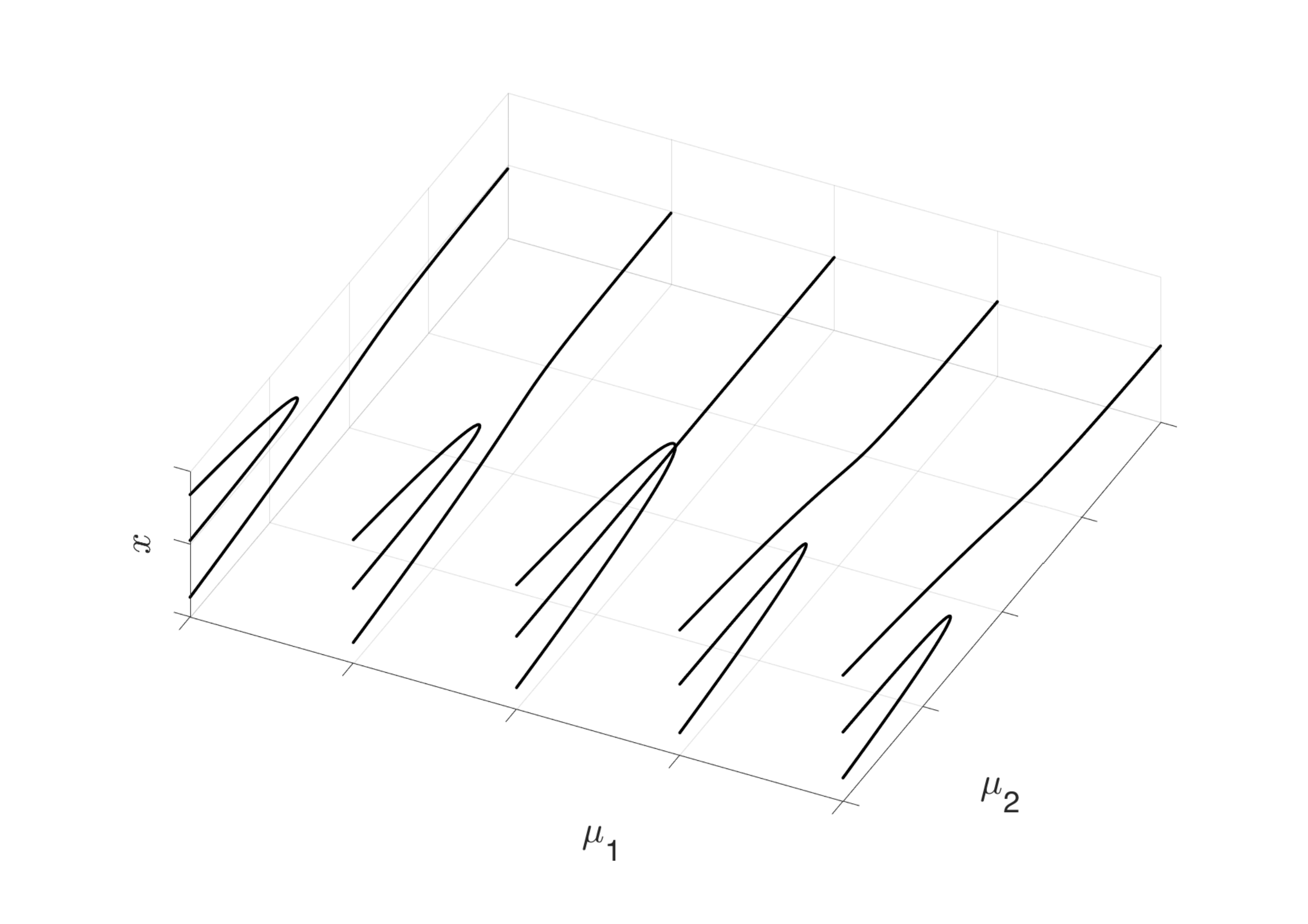}
\end{center}
\caption{The figure shows the critical point set of the model cusp $x^4+\mu_2 x^2+ \mu_1 x$ over the $\mu_1$/$\mu_2$-parameter space for selected values of $\mu_1$. For $\mu_1=0$ we see a pitchfork bifurcation.}\label{fig:pitchforkcusp}
\end{figure}
If the order of accuracy of the integrator is $k$ then, generically, the power of the step-size $h^k$ acts like the parameter $\mu_1$ in figure \ref{fig:pitchforkcusp}. We say the pitchfork is \textit{broken up to the order of accuracy of the integrator}.
This means in a generic setting symplecticity of an integrator cannot be expected to improve the numerical capturing of the periodic pitchfork bifurcation because the bifurcation is due to the integrable structure rather than to symplecticity. 
However, in many important cases, symplecticity does help because symplectic integrators preserve a modified Hamiltonian exponentially well \cite[IX]{GeomIntegration} and are, therefore, guaranteed to capture at least this part of the integrable structure very well. In the planar case, e.g.\, this means the whole integrable structure is captured exponentially well by symplectic integrators. Here, the discretisation parameter does not enter generically but unfolds the pitchfork bifurcation to a family of nearly perfect pitchforks. These pitchforks are broken only up to exponential order in $-h^{-1}$. The same is true in higher dimensions if the $n-1$ additional integrals/symmetries are captured at least exponentially well.
\end{proof}

\section{Summary and outlook}

The paper explains the role of symplecticity when calculating bifurcation diagrams for Hamiltonian boundary value problems. In particular, we show using algebraic considerations as well as numerical examples that hyperbolic and elliptic umbilic bifurcations are preserved if and only if the integration scheme is symplectic. 
Moreover, we show how to exploit the special structure of typical boundary conditions for computations and illustrate our findings in numerical examples including the calculation of conjugate loci.
Even when the stability of a bifurcation is not related to the symplectic structure itself, we show that symplectic integrators can be superior when calculating bifurcation diagrams: we present the periodic pitchfork bifurcation which is related to Liouville integrability. We explain the mechanism of the bifurcation in the discretised case and show that symplectic integrators perform significantly better at preserving the bifurcation in many cases. 
The findings open up new research directions. In \cite{PDEbifur} the authors interpret symplcticity as the existence of variational structure and use that viewpoint to extended the ideas to variational PDEs. 
Numerical aspects are studied further, by developing algorithms to locate high codimensional bifurcations automatically.
It would also be nice to extend existing highly developed bifurcation continuation software packages like AUTO \cite{AutoManual} or pde2path \cite{pde2path} to the high codimensional bifurcation analysis considered in this paper and to compute higher-dimensional manifolds of solutions using manifold continuation \cite{Henderson2002}. The symplectic structure of the steady states of initial-value problems like \eqref{BratuPDE}, that we have focused on in this paper, may also be relevant to their stability.

\begin{acknowledgements}
We thank Peter Donelan, Bernd Krauskopf, and Hinke Osinga for useful discussions. This research was supported by the Marsden Fund of the Royal Society Te Ap\={a}rangi.
\end{acknowledgements}

\bibliographystyle{spmpsci}      
\bibliography{resources}   

\appendix
\setcounter{section}{0}

\section{Jet-RATTLE for calculation of geodesics on hypersurfaces}\label{app:geodesicsRATTLEformulas}

Let $(N,g)$ be a hypersurface of $\R^n$ defined by the equation $f(q)=0$ for $f\colon \R^n \to \R$ such that $\nabla f(q) \not=0$ for all $q \in M$. Here $g$ refers to the induced Riemannian metric on the hypersurface $N$. In order to compute geodesics on $(N,g)$ with respect to the Levi-Civita connection, we apply the RATTLE method to the Hamiltonian
\[
H(q,p)=\frac 12 \langle p,p\rangle
\]
defined on the cotangent bundle over $T^\ast \R^n$ with the standard symplectic structure for cotangent bundles and Darboux coordinates $q^1,\ldots,q^n,p_1,\ldots,p_n$. In the above formula $\langle .,.\rangle$ denotes the euclidean scalar product. For a fixed time-step $h>0$ the RATTLE method gives rise to a map on $T^\ast N$ which is symplectic with respect to the standard symplectic structure on cotangent bundles (assuming convergence of the implicit scheme) \cite{LEIMKUHLER1994117}.

The formulas for the time-$h$-map $\Psi_h$ calculating the two $n$-dimensional vectors $(q_{n+1},p_{n+1})$ from the initial values $(q_{n},p_{n})$ read:

\begin{align}\label{eq:implicitlambda}
0 &= f\left(q_n+h\left(p_n-\frac h2 \nabla f(q_n) \cdot \lambda\right)\right)\\ 
\label{eq:formulap12}p_{n+\frac 12} &= p_n-\frac h2 \nabla f (q_n)\cdot \lambda\\ 
\label{eq:formulaq1}q_{n+1} &= q_n+h p_{n+\frac 12}\\ 
\label{eq:formularn} n &= \frac{\nabla f(q_{n+1})}{\|\nabla f(q_{n+1}) \|}\\ 
\label{eq:formularpn1} p_{n+1} &= p_{n+\frac 12} - \left\langle n,p_{n+\frac 12}\right\rangle n
\end{align}

After the 1-dimensional equation \eqref{eq:implicitlambda} is solved for $\lambda \in \R$ the remaining equations can be evaluated explicitly.

\begin{remark}
The formulas \eqref{eq:formularn} and \eqref{eq:formularpn1} describe a projection of $p_{n+\frac 12}$ to the tangent space at $q_{n+1}$. The effect is wiped out by (\ref{eq:implicitlambda}, \ref{eq:formulap12}, \ref{eq:formulaq1}) of the following step, i.e.\ the value for $q_{n+2}$ does not depend on whether we set $p_{n+1}$ according to \eqref{eq:formularpn1} or simply $p_{n+1} = p_{n+\frac 12}$. If the formulas are iterated then the projection step (\ref{eq:formularn}, \ref{eq:formularpn1}) is only needed in the last step of the iteration (unless one is interested in the intermediate values for $p$ themselves). Indeed, in the examples presented in this paper not only the intermediate $p$-values but also the final momentum is irrelevant. This means for the calculation of conjugate loci one could simply use 
\begin{align*}
0 &= f\left(q_n+h\left(p_n-\frac h2 \nabla f(q_n) \cdot \lambda\right)\right)\\ 
p_{n+1} &= p_n-\frac h2 \nabla f (q_n)\cdot \lambda\\ 
q_{n+1} &= q_n+h p_{n+1}.
\end{align*}
\end{remark}

The derivative $D\Psi_h$ of the time-$h$-map (including the projection step) can be obtained by evaluating the following formulas. We interpret the vectors $q_n$, $p_n$ and the gradient vectors $\nabla f(q_n)$, $\nabla_q \lambda$, etc.\ as column vectors such that, for instance, $\nabla f(q_n) (\nabla_q \lambda)^T$ denotes a dyadic product. The symbol $I$ refers to an $n$-dimensional identity matrix.

\begin{align*}
\nabla_q \lambda &= \frac{-\lambda \Hess f(q_n) n + \frac 2 {h^2}n}{\langle n, \nabla f(q_n)\rangle}\\
\nabla_p \lambda &= \frac{2n}{h \langle n, \nabla f(q_n)\rangle}\\
D_q\left( p_{n+\frac 12} \right) &= -\frac h2 \left( \Hess f(q_n) \lambda + \nabla f(q_n) (\nabla_q \lambda)^T\right)\\
D_p\left( p_{n+\frac 12} \right)&= I-\frac h2 \nabla f(q_n) \nabla_p \lambda^T\\
D_q(q_{n+1}) &= I + h D_q\left( p_{n+\frac 12} \right)\\
m&= \frac{\nabla f(q_n)}{\|\nabla f(q_n)\|}\\
D_p(q_{n+1}) &= h D_p\left( p_{n+\frac 12} \right)\\
D_q (n) &= \frac 1 {\| \nabla f(q_{n+1})\|} \left( \Hess f(q_{n+1}) D_q (q_{n+1})
-n n^T \Hess f(q_{n+1}) D_q (q_{n+1})\right)\\
D_p (n) &= \frac 1 {\| \nabla f(q_{n+1})\|} \left( \Hess f(q_{n+1}) D_p (q_{n+1})
-n n^T \Hess f(q_{n+1}) D_p (q_{n+1})\right)\\
D_q(p_{n+1})&= D_q\left( p_{n+\frac 12} \right)
-\langle n,p_{n+\frac 12}\rangle D_q(n)
-n  p_{n+\frac 12}^T D_q(n)
-nn^T D_q\left( p_{n+\frac 12} \right)\\
D_p(p_{n+1})&= D_p\left( p_{n+\frac 12} \right)
-\langle n,p_{n+\frac 12}\rangle D_p(n)
-n  p_{n+\frac 12}^T D_p(n)
-nn^T D_p\left( p_{n+\frac 12} \right)
\end{align*}

When the time-$h$-map $\Psi_h$ is iterated $N$-times to obtain the numerical time-$Nh$-map $\Phi$, the derivatives can be updated as follows
\begin{align*}
V^{(0)}&= I\\
V^{(n)}&=D\Psi_h(q_{n-1},p_{n-1})V^{(n-1)}.
\end{align*}
We obtain the derivatives as $D\Phi (q_0,p_0)=V^{(N)}$.
We refer to this 1-jet version of the RATTLE method applied to a hypersurface as {\em jet-RATTLE}.

\section{Breaking of an elliptic umbilic using a non-symplectic integrator -- numerical example }\label{app:breakD4RK}

Let us compare the capturing of an elliptic umbilic bifurcation $D_4^-$ by the second order accurate symplectic St\"ormer-Verlet method to a non-symlectic method of the same order of accuracy.
For this we consider the Dirichlet problem for the H{\'e}non-Heiles-type Hamiltonian system described in section \ref{subsec:HenonHeilsD4}. In contrast to the numerical experiment described in \ref{subsec:HenonHeilsD4}, we reduce the number of time-steps from $N=10$ to $N=5$ and perturb the Hamiltonian from \eqref{eq:HenonHeilsHam} with the extra term $0.01 y_2 \sin(y_1)$ to
\[
H(x,y)
=\frac 12 \|y\|^2+\frac 12 \|x\|^2-10\left(x_1^2 x_2-\frac{x_2^3}3 \right)+0.01 y_2 \sin(y_1).
\]
In the considered boundary value problem
\[
(x^1,x^2)=(0,(x^\ast)^2), \quad (\phi^{X^1},\phi^{X^2})=((X^\ast)^1,(X^\ast)^2),
\]
where $\phi^X=(\phi^{X^1},\phi^{X^2})$ are the $x$-components of the Hamiltonian flow map at time 1 and $(x^\ast)^2$, $(X^\ast)^1$, $(X^\ast)^2$ the parameters of the problem, the level bifurcation set is locally given by
\[
\mathcal B = \{ (x^2,\phi^X(0,x^2,y_1,y_2)) \; | \; \det D_y \phi^X(0,x^2,y_1,y_2)=0, (0,x^2,y_1,y_2)\in U \},
\]
for a subset $U \subset \R^4$ of the phase space. The level bifurcation set $\mathcal B$ can be obtained from 
\[
B=\{ (x^2,y_1,y_2) \; | \; \det D_y \phi^X(0,x^2,y_1,y_2)=0, (0,x^2,y_1,y_2)\in U\}.
\]
using $\phi^X$. The figures \ref{fig:LPHenonHeilsBB} and \ref{fig:RKHenonHeilsBB} show plots of the sets $B$ (to the left) and $\mathcal B$ (to the right). For figure \ref{fig:LPHenonHeilsBB} the flow $\phi$ was approximated with the symplectic St\"ormer-Verlet method. We see an elliptic umbilic bifurcation, where three lines of cusps merge in one singular point marked by an asterisk. Its position in the phase portrait of the numerical flow can be calculated as a root of $(x^2,y_1,y_2) \mapsto D_y \phi^X(0,x^2,y_1,y_2)$. For figure \ref{fig:RKHenonHeilsBB} the flow $\phi$ was approximated with the explicit midpoint rule (RK2), which is a second-order non-symplectic Runge-Kutta method. While the sheets in the plot for $B$ still approach a singular point they cannot reach it and connect in a whole circle rather than a singular point.
In the level bifurcation set this has the effect that we do not obtain an elliptic umbilic bifurcation but three lines of cusp bifurcations which fail to merge in an umbilic point.


To know which parts of the bifurcation diagrams are to be compared, the calculations were first done to high accuracy such that the bifurcation diagrams obtained by the St\"ormer-Verlet method and by RK2 were close. Then the step sizes were increased gradually and the movement of the singular point where the matrix $D_y \phi^X(0,x^2,y_1,y_2)$ is near the zero matrix was tracked in both simulations.

\begin{figure}
\begin{center}
\includegraphics[width=0.48\textwidth]{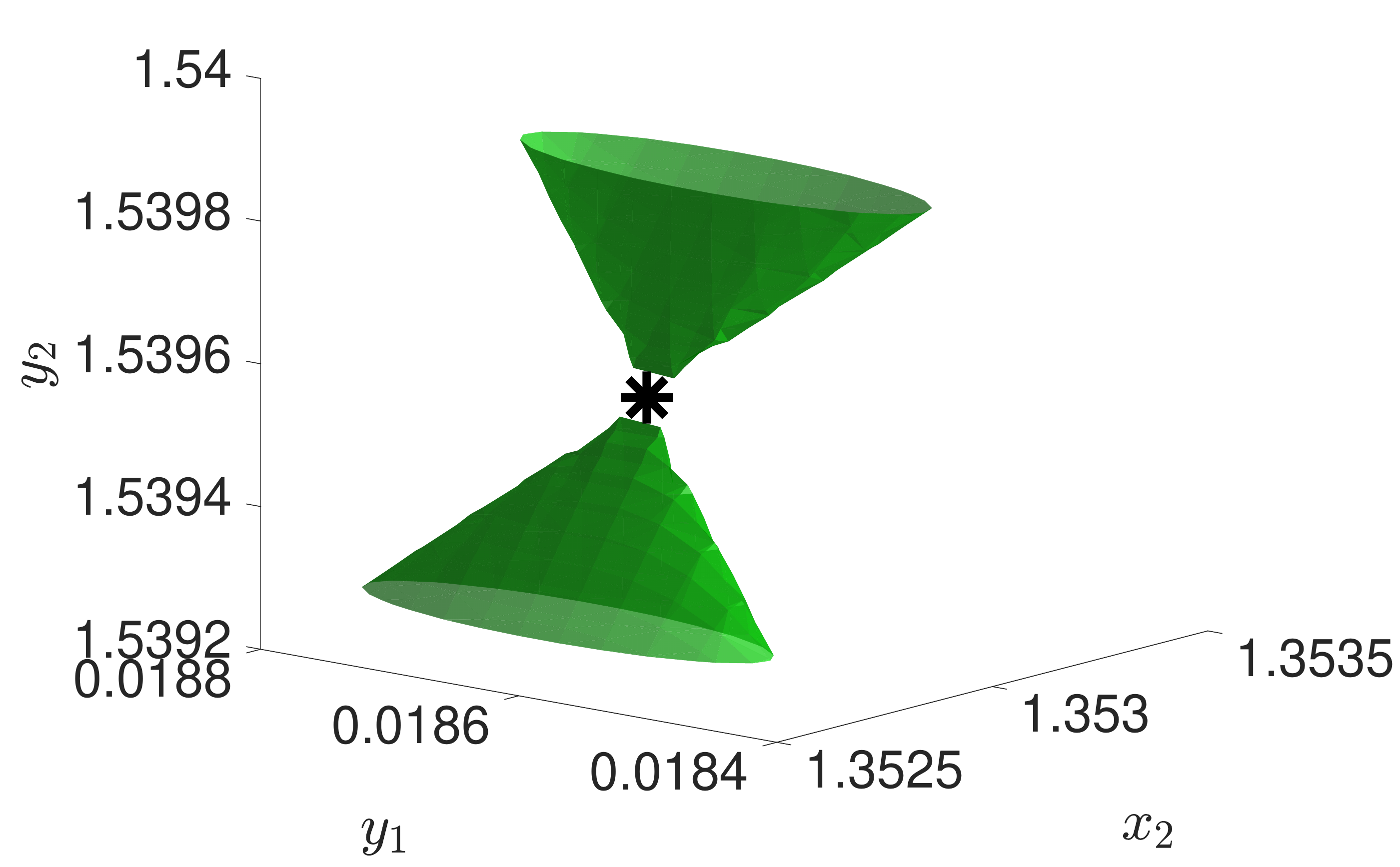}
\quad
\includegraphics[width=0.48\textwidth]{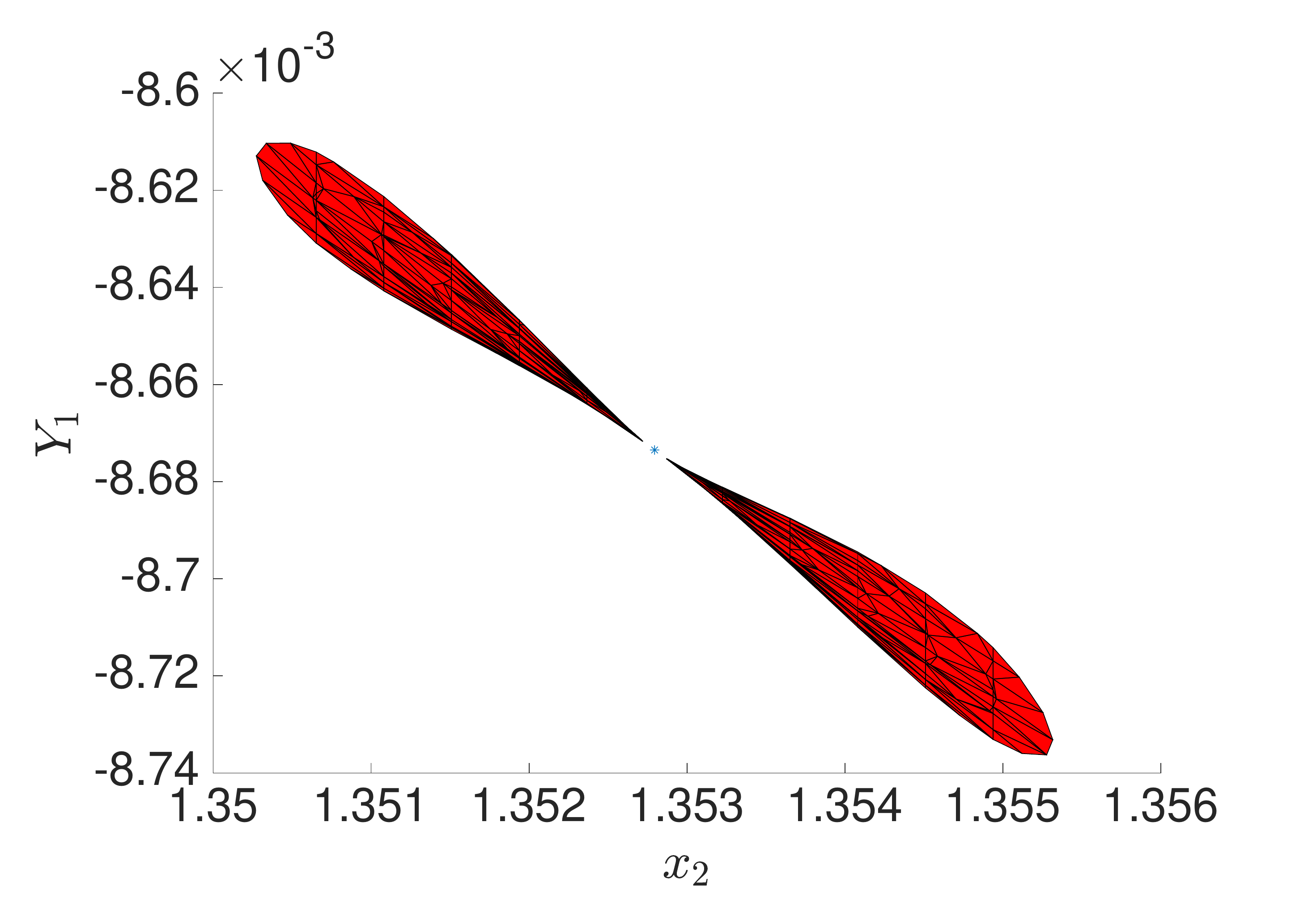}
\end{center}
\caption{Resolving an elliptic umbilic bifurcation $D^-_4$ with the symplectic St\"ormer-Verlet method. The plot to the left shows the set $B$ and the plot to the right shows the level bifurcation set $\mathcal B$.}\label{fig:LPHenonHeilsBB}
\end{figure}

\begin{figure}
\begin{center}
\includegraphics[width=0.48\textwidth]{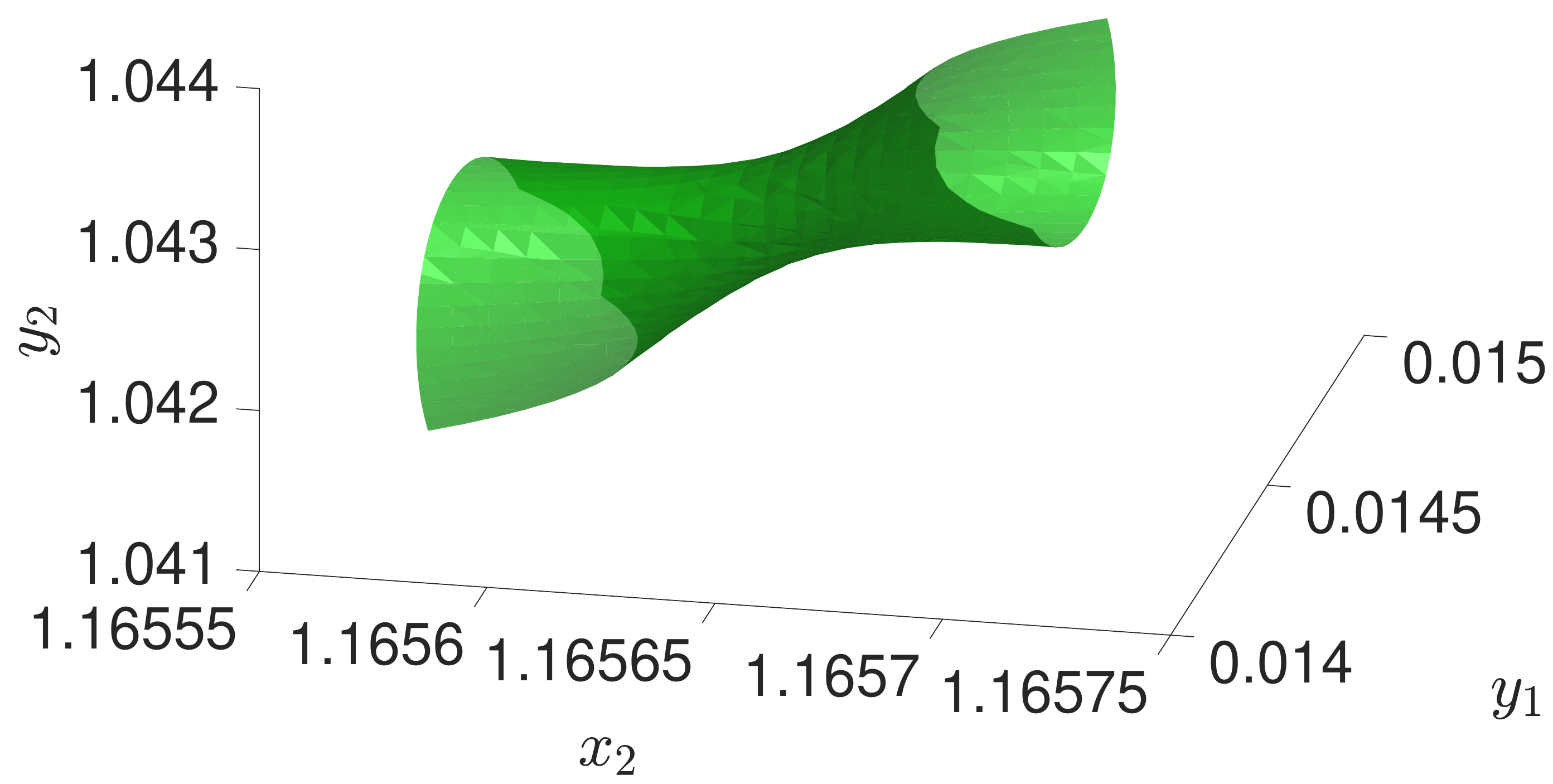}
\quad
\includegraphics[width=0.48\textwidth]{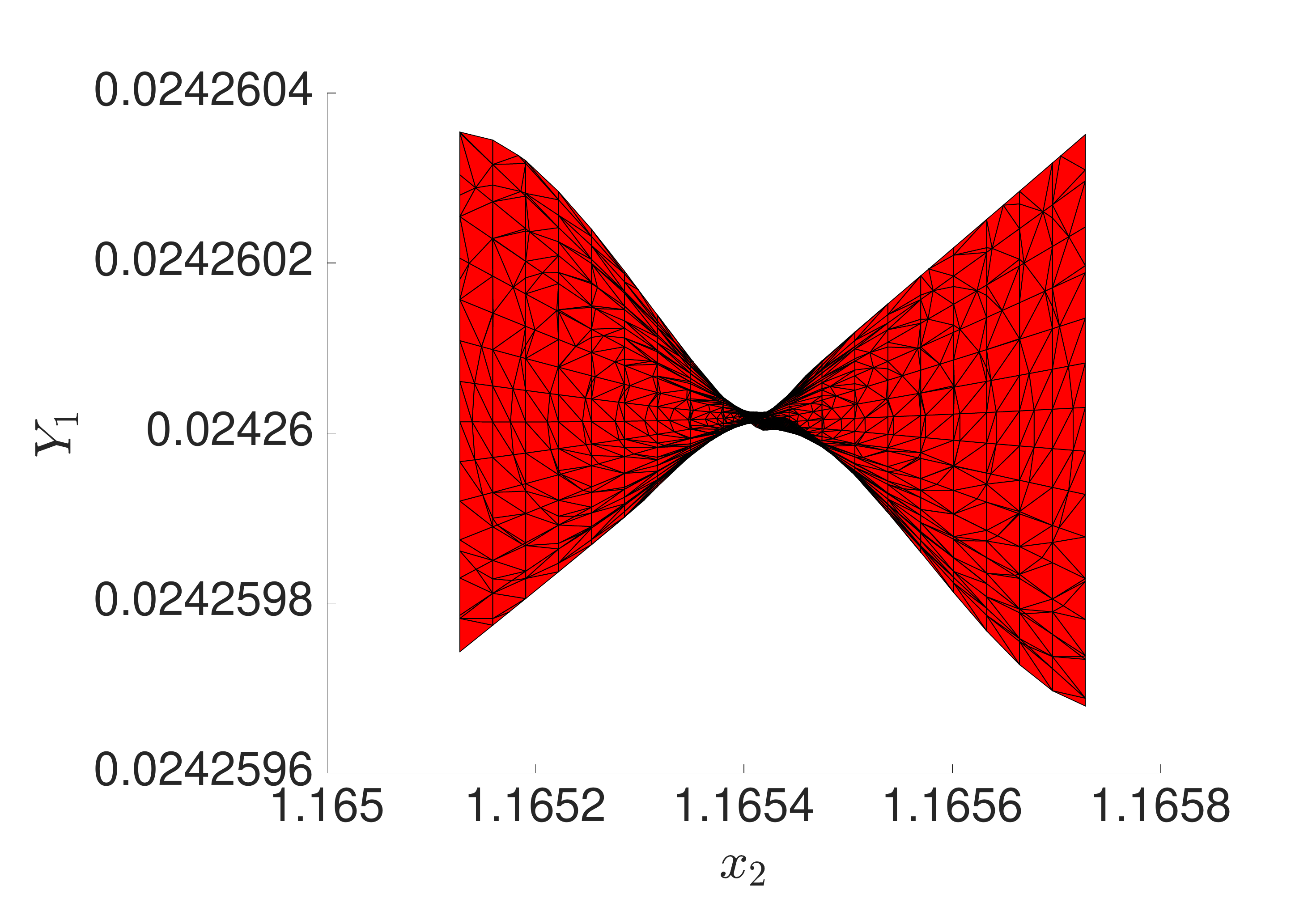}
\end{center}
\caption{Resolving an elliptic umbilic bifurcation $D^-_4$ with the non-symplectic second order Runge-Kutta method. The plot to the left shows the set $B$ and the plot to the right shows the level bifurcation set $\mathcal B$. The set was rotated around the $Y_2$ axis by $0.0271 \mathrm{rad}$ in order to allow for a convenient rescaling of the axes. Instead of an elliptic umbilic bifurcation there are three lines of cusp bifurcations which fail to merge. }\label{fig:RKHenonHeilsBB}
\end{figure}

%
%

\end{document}